\documentclass[11pt]{amsart}
\usepackage{amsfonts,amssymb}
\usepackage{amsthm}
\usepackage{amscd}
\usepackage{array}
\usepackage{color}
\usepackage{pstricks,pst-node}
\usepackage{pst-plot}
%


\catcode`\@=11
\def\marginnote#1{}

\newcount\hour
\newcount\minute
\newtoks\amorpm
\hour=\time\divide\hour by60 \minute=\time{\multiply\hour by60
\global\advance\minute by-\hour}
\edef\standardtime{{\ifnum\hour<12 \global\amorpm={am}%
        \else\global\amorpm={pm}\advance\hour by-12 \fi
        \ifnum\hour=0 \hour=12 \fi
        \number\hour:\ifnum\minute<10 0\fi\number\minute\the\amorpm}}
\edef\militarytime{\number\hour:\ifnum\minute<10 0\fi\number\minute}
\renewcommand{\theequation}{\thesection.\arabic{equation}}

%
%
\def\draftlabel#1{{\@bsphack\if@filesw {\let\thepage\relax
      \xdef\@gtempa{\write\@auxout{\string
          \newlabel{#1}{{\@currentlabel}{\thepage}}}}}\@gtempa \if@nobreak
    \ifvmode\nobreak\fi\fi\fi\@esphack} \gdef\@eqnlabel{#1}}
    \def\@eqnlabel{}
\def\@vacuum{}
\def\draftmarginnote#1{\marginpar{\raggedright\scriptsize\tt#1}}

\def\draft{
%
%
  \oddsidemargin -.5truein
  \def\@oddfoot{\footnotesize \sl preliminary draft \hfil
    \rm\thepage\hfil\sl\today\quad\militarytime}
  \let\@evenfoot\@oddfoot \overfullrule 3pt
    \let\label=\draftlabel
    \let\marginnote=\draftmarginnote
  \def\@eqnnum{(\theequation)\rlap{\kern\marginparsep\tt\@eqnlabel}%
    \global\let\@eqnlabel\@vacuum}

  }


%

\textheight 24cm \textwidth 17cm
\voffset= - 0.5in \hoffset= - 1.0in

\def\be{\begin{equation}}
\def\ee{\end{equation}}
\def\bea{\begin{eqnarray}}
\def\eea{\end{eqnarray}}
\def\<{\langle}
\def\>{\rangle}

\def\tr{{\mathrm{tr\,}}}

\def\bea{\begin{eqnarray}}
\def\eea{\end{eqnarray}}

\def\beq{\begin{equation}}
\def\eeq{\end{equation}}
\def\ba{\beq\begin{array}{c}}
\def\ea{\end{array}\eeq}






\unitlength=1.5pt

\theoremstyle{plain}
\newtheorem{theorem}{Theorem}[section]
\newtheorem{lemma}[theorem]{Lemma}

\theoremstyle{definition}
\newtheorem{definition}{Definition}[section]
\newtheorem{remark}[definition]{Remark}

\let\text=\mathrm

\newcommand{\scr}{\scriptstyle}

\def\beq{\begin{equation}}
\def\eeq{\end{equation}}
\def\bea{\begin{eqnarray}}
\def\eea{\end{eqnarray}}

\def\Tr{{\tr}}

\parskip=0.4em

\newcommand{\cpict}[3]{
\dimen1=#1\advance\dimen1 by-\hsize\divide\dimen1 by-2 \vtop to #2{
\noindent\hskip\dimen1{\special{em:graph #3.bmp}} \vfil}\hskip-2cm }

\newcommand{\tcr}{\textcolor{red}}
\newcommand{\tcb}{\textcolor{blue}}

\newcommand{\sheet}[2]{{\stackrel{{#1}}{{#2}}}}

\long\def\rem#1{}
\let\@@savethanks\thanks
\def\thanks#1{\gdef\thefootnote{\alph{footnote}}\@@savethanks{#1}}

\begin{document}

\title[Algebras of quantum monodromy data and character varieties]
{Algebras of quantum monodromy data and decorated character varieties}
\author{Leonid Chekhov$^{\ast}$}\thanks{$^{\ast}$Steklov Mathematical Institute of Russian Academy of Sciences and
Laboratoire Poncelet, Moscow, Russia, and Michigan State University, East Lansing, USA.
Email: chekhov@mi.ras.ru.}
\author{Marta Mazzocco$^\dagger$}\thanks{$^\dagger$Department of Mathematical Sciences, Loughborough University, LE11 3TU, United Kingdom.  Email: m.mazzocco@lboro.ac.uk}
\author{Vladimir Rubtsov$^\star$}\thanks{$^\star$Universit\'e d'Angers, France and Theory Division, ITEP;
Moscow, Russia;  Email: volodya@univ-angers.fr}

\maketitle

\phantom{XXX}\hfill {\em For the 70th birthday of Nigel Hitchin}

\section{introduction}
The classical Riemann-Hilbert problem deals with Fuchsian systems on
the Riemann sphere.
Let us consider a meromorphic system of first order ODEs:
\begin{equation}\label{Fuchs}
\frac{d\Psi}{dz} = \sum_{i=1}^s \frac{A_i}{z-a_i}\Psi,
\end{equation}
 where $z$ is a coordinate on the sphere
$\Sigma_{0,s}:=\mathbb P^1\backslash  \{a_1,\ldots,a_s\}$, where $\{A_1,\ldots,A_s\}\subset {\mathfrak{sl}}_k(\mathbb C)$  are constant in $z$.

The Riemann-Hilbert correspondence is defined by associating to each Fuchsian system its monodromy representation class obtained by considering the analytic continuation of a fundamental matrix $\Psi(z)$ of  \eqref{Fuchs} around loops $\gamma_i$, $i=1,\dots,s$, encircling each singular point $a_i$:
$$
\rho: \pi_1(\mathbb P^1\setminus\{a_1,\ldots,a_s\}, a_0)\to SL_k(\mathbb C).
$$
Taking the conjugacy classes one obtains the Betti moduli space of monodromy representations, or the {$SL_k$--character variety}:
\begin{equation}\label{Betti-0}
\mathcal M_B={\rm Hom}\left(\pi_1 (\Sigma_{0,s}) \to SL_k (\mathbb C)\right)\slash_{SL_k (\mathbb C)}.
\end{equation}
Geometrically, the system \eqref{Fuchs} can be replaced by  the meromorphic connection
$$
\nabla := \left(\frac{\partial}{\partial z}-A(z)\right)dz = d -\sum_{i=1}^s \frac{A_i}{z-a_i }dz.
$$
on the trivial holomorphic vector bundle $\mathbb C^k\times\mathbb P^1\to \mathbb P^1\setminus\{a_1,\ldots,a_s\}$. In this setting
 the Riemann-Hilbert correspondence is an isomorphism
\begin{equation}\label{RH}
RH:\mathcal M_{DR}\simeq \mathcal M_B
\end{equation}
where $\mathcal M_{DR} $ is the   de Rham moduli space
$$
\mathcal M_{DR} = \left\{(\nabla, E), E\to \Sigma_{0,s}\right\}\slash_{\mathcal S}
$$
of  logarithmic connections  $\nabla$ on holomorphic rank $k$ vector bundles $E$ over the Riemann sphere $\Sigma_{0,s}$ with $s$ boundary components and $\mathcal S$ is the gauge group.

In \cite{Hit}, Hitchin proved that this map  is a symplectomorphism.
To be precise, denoting by $\mathcal O_i$ the conjugacy classes of the residues $A_i$, $i=1,\dots,s$, one can endow $\mathcal M_{DR}$ with the standard Lie--Poisson structure on ${\mathcal O}_1 \times \dots \times {\mathcal O}_s\in\mathfrak{sl}_k\times  \mathfrak{sl}_k\times\dots\times \mathfrak{sl}_k$ obtained by identifying $\mathfrak{sl}_k$ with $\mathfrak{sl}_k^*$. Upon fixing the conjugacy classes, this Poisson structure restricts to a symplectic structure.
On the Betti moduli space $\mathcal M_{B}$, Hitchin considered the Poisson structure constructed by Audin \cite{Audin} as follows. Consider the Atiyah--Bott symplectic structure
$$
\Omega = \frac{k}{4\pi}{\rm Tr}\int_{\Sigma}{\delta A}\wedge {\delta A}
$$
on the space ${\mathcal Conn}(\Sigma)$ of all smooth $\mathfrak g$-valued connections $A$ (for $\mathfrak g$ a simple Lie algebra) on a compact Riemann surface
 $\Sigma$. When no boundaries are present, one replaces the space ${\mathcal Conn}(\Sigma)$ by the space ${\mathcal M}(\Sigma)={\mathcal M_0}(\Sigma)/\mathcal S$, the quotient of the space
${\mathcal M_0}(\Sigma)$ of all flat connections on $\Sigma$ by the gauge  group. Since $\Sigma$ is closed, the momentum map is just the curvature, so that the space ${\mathcal M}(\Sigma)$ is just a reduced level set of the momentum mapping and thus a symplectic manifold. In the presence of boundaries, the curvature is the momentum map of a smaller group, so one needs to consider a central extension of the group of gauge transformations in order to construct the Poisson structure. In particular one needs to add a correction term to the Atiyah--Bott symplectic structure:
 $$
\Omega_c = \frac{k}{4\pi}{\rm Tr}\int_{\partial\Sigma}\phi \wedge {\delta A},\qquad {\rm d}_A\phi=  {\delta A},
$$
The Poisson structure on the Betti moduli space is the result of the Hamiltonian reduction on the zero level of the momentum map associated to $\Omega+\Omega_c$. This Poisson structure coincides with the Goldman bracket on the character variety $\mathcal M_B$ \cite{Gold,gold1}.

In this paper, we address the question of what happens to this theory if we allow connections with higher order poles on holomorphic rank $k$ vector bundles $E$ on Riemann surfaces $\Sigma_{g,s}$ of genus $g$ and $s$ boundary components.

This question has been addressed by a number of Hitchin's disciples. In \cite{PB1} Boalch treated the case of a system with a regular pole and a pole of order two at infinity by using the Laplace transform \cite{Har, Dub, M1} to map it back to the Fuchsian case. More recently he introduced the notion of wild character variety \cite{PB} in which higher order poles are blown up to produce extra regular poles (at the intersection between a boundary circle and the Stokes directions) and the fundamental group is replaced by the groupoid of closed loops around these extra regular poles. He defined the Poisson structure on the wild character variety by using the quasi-Hamiltonian approach by Alexeev and collaborators \cite{AKSM,AMM}.

An elegant approach was proposed by Gualtieri--Li-Pym \cite{GLP}. In this case they start with the space of meromorphic connections on a smooth curve with a pole divisor $D$ (poles with multiplicity). Let $S$ be the space such that the parallel transport defined by this connection with poles bounded by the divisor $D$ exists and is holomorphic (outside of the poles).  They then consider the Lie algebroid $\mathcal A$ tangent to $S$ and define the {\it Stokes groupoid}\/ as the Lie groupoid that integrates this Lie algebroid.  They compute this groupoid in the case of the Airy equation and demonstrate that it is the usual pairing groupoid with a twist. A similar computation can be carried out for every case in which total summability works. This result gives a beautiful geometric explanation of Ecalle resummability theory.

In \cite{CMR}, based on the idea of interpreting higher order poles in the connection as boundary components with {\it bordered cusps}\footnote{We use the term bordered cusp meaning a vertex of an ideal triangle in the Poincar\'e metric in order to distinguish it from standard cusps (without borders) associated to punctures on a Riemann surface.}  \cite{ChM1} on the Riemann surface, we introduced the notion of {\it decorated character variety.} Let us remind this definition here.

Topologically speaking a Riemann surface  $\Sigma_{g,s,n}$  of genus $g$ with $s$ holes and $n$ bordered cusps is equivalent to a Riemann surface $\tilde\Sigma_{g,s,n}$ of genus $g$, with $s$ holes and $n$ marked points $m_1,\dots,m_n$ on the boundaries. Then one defines {\it the fundamental groupoid of arcs}\/ $\pi_{\mathfrak a}(\Sigma_{g,s,n})$ as the set of all directed paths $\gamma_{ij}:[0,1]\to\tilde \Sigma_{g,s,n}$ such that $\gamma_{ij}(0)=m_i$ and $\gamma_{ij}(1)=m_j$ modulo homotopy. The groupoid structure is dictated by the usual path--composition rules.
 The $SL_k$ decorated character variety is defined as:
\begin{equation}\label{fga}
\mathcal M_{g,s,n}^k:={\rm Hom}\left(
\mathfrak \pi_{\mathfrak a}(\Sigma_{g,s,n}),SL_k(\mathbb C)
\right)\slash_{\prod_{j=1}^n U_j},
\end{equation}
where $U_j$ is the unipotent Borel subgroup in $SL_k(\mathbb C)$ (one unipotent Borel subgroup for each bordered cusp).

Our interest in the representation spaces and their interpretation as decorated character varieties goes back to study of the moduli space of monodromy representations for the fundamental group of the 4-holed sphere. This is the $SL_2-$character variety
\begin{equation}\label{Betti-0,4}
\mathcal M_B={\rm Hom}\left(\pi_1 (\Sigma_{0,4}) \to SL_2 (\mathbb C)\right)\slash_{SL_2 (\mathbb C)},
\end{equation}
and the above discussed Poisson structure can also be obtained as a reduction of so-called {\it Korotkin--Samtleben} bracket (\cite{KS}) on ${\rm Hom}\left(\pi_1 (\Sigma_{0,4}) \to SL_2 (\mathbb C)\right)$ (which is in fact the quasi-Poisson structure in sense of  \cite{AKSM,AMM}).  The relation with Teichm\"{u}ller space parametrisation and a quantisation of this Poisson character variety was proposed by the first two authors in \cite{ChM}.

This Poisson manifold is also known as the monodromy manifold of the linear system corresponding to the Painlev\'e $VI$ equation. Our notion of decorated character variety was motivated by a
challenging problem of giving a  definition compatible with the confluence operations that give rise to all other Painlev\'e differential equations - in this case  the {\it Stokes phenomenon} appears- the solutions in the vicinity of the multiple poles have different asymptotic behaviours in different sectors.

In this paper we study the Poisson structure on the representation space
$$
\mathcal R_{g,s,n}^k:={\rm Hom}\left(
\mathfrak \pi_{\mathfrak a}(\Sigma_{g,s,n}),SL_k(\mathbb C)
\right),
$$
induced by the Fock--Rosly bracket \cite{FR} as explained in \cite{AGS} (see also \cite{BR,BR1,RS}) and prove that the quotient by unipotent Borel subgroups giving rise to the decorated character variety \eqref{fga} is a Poisson reduction. More precisely, we consider the Poisson structure on the matrices
$M$ that correspond to directed arcs in $ \pi_{\mathfrak a}(\Sigma_{g,s,n})$.  We call these matrices {\it monodromy data,} because  they indeed contain Stokes matrices, connection matrices and standard monodromy matrices of linear systems on first order ODEs (see Section \ref{se:analytic}).
 We treat classical and quantum case simultaneously, thus providing a quantisation of the decorated character variety \eqref{fga}. It would be interesting to understand the categorical version of our quantisation along the lines of  the recent papers by Ben-Zvi, Brochier and Jordan \cite{BZBJ, BZBJ1} - we postpone this to future publications.

In their seminal papers \cite{FG1}, \cite{FG2}, Fock and Goncharov introduced a set of Darboux coordinates for $SL_k(\mathbb R)$ systems on Riemann
surfaces with holes. Nevertheless, to the best of our knowledge, a comprehensive analysis relating the Fock--Goncharov construction to the Fock--Rosly algebras was still missing. Elements of this construction (in the classical case, without references to Poisson or quantum structures) had appeared in papers of Musiker, Schiffler and Williams \cite{MSW2}, \cite{MW} mostly devoted to establishing connection to
cluster algebras; it was there where lambda-lengths were identified with upper-left elements of $SL_2(\mathbb R)$-monodromy data. Shear coordinates associated with decorated bordered cusps were introduced simultaneously and independently by the first two authors in \cite{ChM1} and by Allegretti \cite{Al}. A useful technical tool allowing avoiding most difficulties of the standard combinatorial description of structures on Riemann surfaces with holes is that, having at least one bordered cusp, we can consider ideal-triangle partitions of these surfaces with triangles based
only at bordered cusps enclosing all holes without cusps in monogons. We then restrict the set of mapping class group (MCG) transformations (and the corresponding cluster mutations) to {the generalized} cluster algebras introduced in \cite{ChSh}. Upon imposing these restrictions we can establish an isomorphism between extended shear coordinates and lambda-lengths, both enjoying homogeneous Poisson or quantum relations, and explicitly construct the monodromy data of the system (\ref{fga}) for $k=2$. The homogeneous Poisson or quantum relations for shear coordinates then induce classical or quantum Fock--Rosly relations for elements of monodromy data with 
the  Poisson  reduction imposed by the quotient by unipotent Borel subgroups. All matrix elements of all monodromy data are then sign-definite Laurent polynomials either of exponentiated shear coordinates or of lambda-lengths.

The paper is organized as follows. In Sec.~\ref{s:SL2}, we briefly recall the hyperbolic geometry description of Teichm\"uller spaces of
Riemann surfaces with holes and bordered cusps and formulate the main statement of the paper (Theorem~\ref{thm:1}). We then derive the
quantum Fock--Rosly-like algebras of monodromy data for $SL_2$ out of coordinate algebras of the quantum Teichm\"uller spaces.
In Sec.~\ref{s:SLk}, we study Poisson and quantum $R$-matrix structures of decorated character varieties for general $SL_k$-monodromy data  paying a special 
attention to the Poisson reduction due to factorization w.r.t. Borel subgroups. In Sec.~\ref{s:examples}, we consider in details three important
examples of our construction for a general $SL_k(\mathbb R)$-monodromy data: $\Sigma_{1,s+1,1}$, $\Sigma_{0,2,2}$, and $\Sigma_{0,1,3}$. Finally, in Sec.~\ref{s:RH},  keeping in mind the idea of extending the Riemann--Hilbert correspondence to  $\mathcal Z^{irr}_{g,s}\to\mathcal M_{g,s,n}^k$, where
 $\mathcal Z^{irr}_{g,s}$ is a suitably decorated moduli space of irregular connections $\nabla$ on a holomorphic rank $k$-vector
bundle $E \to \Sigma_{g,s}$, we propose a formula linking the number $n$ of bordered cusps on  $ \Sigma_{g,s,n}$ to the irregular type of the connection. Because, as explained in \cite{CMR}, the decorated character variety \eqref{fga} contains the wild character variety as the sub-algebra of functions that Poisson commute with the function associated to certain arcs connecting bordered cusps, the space  $\mathcal Z^{irr}_{g,s}$ is in fact an extension of the one considered by Boalch in  his  survey on Riemann--Hilbert correspondence in this same issue \cite{BoalHitProc}.

Throughout this paper we use the following notation: we denote by $M$ a monodromy datum, or a matrix in $SL_k$, and by $m_{ij}$ its elements. 

\section{Algebras for $SL_2(\mathbb C)$ monodromy data for surfaces with bordered cusps}
\label{s:SL2}

\subsection{Darboux coordinates in dimension $2$}

In the case of dimension $k=2$, the complex dimension of the decorated character variety is $6g -6+3 s+2 n$. In this section we restrict to the real-analytic sub-variety
$$
\mathcal R_{g,s,n}^{\mathbb R}:={\rm Hom}\left(
\mathfrak \pi_{\mathfrak a}(\Sigma_{g,s,n}),SL_2(\mathbb R)
\right),
$$
and show how to construct real Darboux coordinates. We then complexify them to define Darboux coordinates on the complex representation space $\mathcal R_{g,s,n}^2$.

Every bordered cusp is endowed with a \emph{decoration}---a horocycle based at the end of the cusp, which is a point on an absolute. Such a horocycle cuts out an infinite part of a cusp and we consider only parts of arcs that are confined between two horocycles decorating two cusps at which this arc terminates (this can be the same cusp, then the same horocycle).

We split the Riemann surface $\Sigma_{g,s,n}$ into ideal triangles based at bordered cusps; the edges of these triangles are  arcs; if there are holes without bordered cusps, these holes by prescription are always enclosed in monogons (obviously bordered by arcs starting and terminating at the same bordered cusp.

For every ideal-triangle decomposition as above we consider the dual fat graph $\Gamma_{g,s,n}$ all vertices of which are three-valent except exactly $n$ one-valent vertices being in 1-1 relation with bordered cusps. Each (non-directed) edge of $\Gamma_{g,s,n}$ carries an (extended) shear coordinate (denoted by capital $Z$ letters for internal edges and by $\pi_j$ for edges ending at one-valent vertices) from the set $\{Z_\alpha,\pi_j\}$; for every hole without cusps we have an edge dual to the bordering arc and the loop attached to the (inner) end of this edge carrying the coefficient $\omega$ ($\omega=e^{P/2}+e^{-P/2}$ for a hole with the perimeter $P$ or $\omega=2\cos(\pi/n)$ for a $\mathbb Z_n$ orbifold point). The shear coordinates $\{Z_\alpha,\pi_j\}$ are either real numbers (in the classical case) or Hermitian operators (in the quantum case) with constant commutation relations determined by the graph  $\Gamma_{g,s,n}$. All coefficients $\omega$ are Casimirs.

From \cite{ChSh}, for any choice of real numbers $\{Z_\alpha,\pi_j\}$ and parameters $\omega_\beta$, we have a metrisable Riemann surface $\Sigma_{g,s,n}$, and vice versa, for any Poincar\'e uniformizable $\Sigma_{g,s,n}$, we have a (non-unique) set of $\{Z_\alpha,\pi_j\}$ and $\omega_\beta$ and a graph $\Gamma_{g,s,n}$ determining the gluing of this surface out of ideal triangles using the extended shear coordinates
$\{Z_\alpha,\pi_j\}$.

Then, we have an explicit parameterisation of the monodromy data in terms of extended shear coordinates. To every edge we set into correspondence the \emph{edge matrix} $X_A:=
\left(
\begin{array}{cc}
  0 &  -e^{A/2}   \\
   e^{-A/2}  &  0
\end{array}
\right)
$
where $A\in \{Z_\alpha,\pi_j\}$ is the extended shear coordinate of this edge. When an oriented path goes through the corresponding edge (in any direction) we multiply from the left by the edge matrix. When a path
turns left or right at three-valent vertices, we multiply from the left by the corresponding matrices
$L=\left(
\begin{array}{cc}
  0 &  1   \\
   -1  &  -1
\end{array}
\right)
$
and
$R=\left(
\begin{array}{cc}
  1 &  1   \\
   -1  &  0
\end{array}
\right)
$
and when a path goes clockwise around the loop containing the hole endowed with the coefficient $\omega$ we multiply from the left by
$F_\omega=\left(
\begin{array}{cc}
  0 &  1   \\
   -1  &  -\omega
\end{array}
\right).
$
We always begin with the edge matrix $X_{\pi_i}$ of the cusp the arc begins with; the last matrix in the product is always the edge matrix $X_{\pi_j}$ of the cusp at which the arc terminates. In the quantum case, the quantum ordering is the natural ordering in the product provided we scale the left and right turn matrices: $L\to q^{1/4}L$, $R\to q^{-1/4}R$ (if $[X,Y]=2\pi i\hbar$, then $e^Xe^Y=q e^{X+Y}$, so $q=e^{\pi i\hbar}$).

The thus constructed matrix products are invariant under the quantum MCG transformations (mutations of inner edges including those dual to monogons containing holes without cusps) so we can always reduce a given matrix product to a simpler one: for example, a monodromy datum corresponding to an arc with ends at two different cusps and such that it does not correspond to a bordered arc can be brought to the form $M=X_{\pi_j}L X_Z R X_{\pi_i}$; every monodromy datum of a bordered arc that goes clockwise can be brought to the form $M=X_{\pi_j}L X_{\pi_i}$ and the one going counterclockwise can be brought to the form $M=X_{\pi_j} R X_{\pi_i}$.

We now define the monodromy data for arc-like paths.

\begin{definition}\label{def1}
For a given set of extended shear coordinates $\{Z_\alpha,\pi_j\}$ and coefficients $\omega_\beta$ associated with a spine (fat graph)
$\Gamma_{g,s,n}$, the $SL_2(\mathbb R)$-monodromy data associated to (directed) arcs (directed paths starting and
terminating at bordered cusps) are
\begin{equation}\label{eqM}
M_{\mathfrak a}=X_{\pi_2}L X_{Z_{\alpha_n}}R\cdots L X_{Z_{\alpha_j}} F_{\omega_\beta} X_{Z_{\alpha_j}}R\cdots L X_{Z_{\alpha_1}}R X_{\pi_1},
\end{equation}
where $\pi_1$ and $\pi_2$ are the extended shear coordinates of the respective starting and terminating bordered cusps. In the quantum case,
the quantum ordering is the natural ordering prescribed by matrix multiplication and we make a scaling $L\to q^{1/4} L$, $R\to q^{-1/4} R$.
\end{definition}

\begin{definition}\label{def2}
We define the $\lambda$-length of an arc ${\mathfrak a}$ to be the upper-left element of $M_{\mathfrak a}$ defined by (\ref{eqM})
(u.r.$(M)$ in \cite{MSW2}, \cite{MW} or tr$_K(M)$ in \cite{ChM1}).
\end{definition}

Identifying $\{Z_\alpha,\pi_j\}$ with the extended shear coordinates, the thus defined $\lambda$-lengths (in the classical case) are $e^{\ell_{\mathfrak a}/2}$, where $\ell_{\mathfrak a}$ are actual (signed)
lengths of stretched between decorating horocycles parts of geodesic curves that join the corresponding bordered cusps and belong to the same homotopy class as the arc $\mathfrak a$. We often just identify these arcs with the corresponding $\lambda$-lengths writing merely $\lambda_{\mathfrak a}$.

\begin{lemma}\label{lem:1}
\cite{ChPen,ChSh,ChM1}\ \ The classical and quantum monodromy data $M_{\mathfrak a}$ are invariant under MCG transformations (extended cluster mutations) induced by mutations (flips) of inner edges of $\Gamma_{g,s,n}$.
\end{lemma}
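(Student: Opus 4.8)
The plan is to verify invariance locally: a mutation (flip) of an inner edge of $\Gamma_{g,s,n}$ changes only the shear coordinates of the flipped edge and of the (at most four) edges bounding the two triangles adjacent to it, and it changes the combinatorics of those two triangles only. Hence any arc-like path $\mathfrak a$ meets the affected region in a bounded number of local pieces, and it suffices to check that the product of edge-, $L$-, $R$-, $F_\omega$-matrices attached to each such local piece is unchanged (as an element of $SL_2$, and in the quantum case as an element of the relevant operator algebra) once we substitute the mutated shear coordinates. First I would fix notation for a flip: label the quadrilateral formed by the two adjacent triangles with boundary edges carrying shear coordinates $a,b,c,d$ (cyclically) and diagonal $Z$; after the flip the diagonal carries $Z'$ and the boundary edges carry $a',b',c',d'$, with the standard Penner/Chekhov--Fock transformation rules (the $\phi$-function rules; in the classical limit $Z'=-Z$, $a'=a+\log(1+e^{Z})$, etc., with the analogous quantum dilogarithm version from \cite{ChM1,ChSh}).

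Next I would enumerate the ways an oriented arc can traverse this quadrilateral: it can enter through one boundary edge and leave through another, passing the diagonal either on the left or the right, or it can run along a boundary edge, or (in the monogon case, where one of the triangles degenerates and $F_\omega$ appears) wind around the hole. For each case the contribution to $M_{\mathfrak a}$ before the flip is a specific word such as $X_{a}\,R\,X_{Z}\,L\,X_{b}$ (up to reflection/left-right swap), and after the flip it becomes a word not involving $Z$ at all, e.g.\ $X_{a'}\,L\,X_{b'}$, with $a',b'$ the mutated coordinates. The core computation is then the matrix identity
\begin{equation}\label{eq:flipid}
X_{a}\,R\,X_{Z}\,L\,X_{b}=X_{a'}\,L\,X_{b'},
\end{equation}
together with its left--right mirror and the variants where $X_Z$ is flanked by $F_\omega$ (covering the monogon/orbifold situation). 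These are exactly the pentagon-type relations for the $PSL_2$ generators $L,R,X_A$ already recorded in \cite{ChPen}; I would either cite them or reduce \eqref{eq:flipid} to the single scalar identity $e^{(a+b)/2}(1+e^{Z})=e^{(a'+b')/2}$ after diagonalization, which is precisely the classical shear-coordinate transformation law. The boundary-edge and winding cases reduce similarly, and the cases where the arc terminates inside the quadrilateral are handled by noting that the $\pi_j$-edges are never flipped, so the endpoint matrices $X_{\pi_1},X_{\pi_2}$ are untouched.

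In the quantum case the same local words must be checked with the scalings $L\to q^{1/4}L$, $R\to q^{-1/4}R$ and with the $Z$'s now Hermitian operators obeying the constant commutation relations read off from $\Gamma_{g,s,n}$; the mutated coordinates are defined by the quantum dilogarithm (Faddeev's $\Phi_\hbar$) transformation. The identity \eqref{eq:flipid} becomes an operator identity whose verification is the standard quantum pentagon relation of \cite{ChM1,ChSh}; the powers of $q$ from the $L,R$ scalings are exactly what is needed to match the quantum ordering on both sides. Finally I would note that a general MCG transformation is a composition of flips, so invariance under each flip gives invariance under the whole MCG action, completing the proof. \textbf{The main obstacle} is purely bookkeeping rather than conceptual: one must be careful that \emph{every} way an arc can pass through the mutated region — including the degenerate monogon configuration with $F_\omega$ and the case of an arc that enters and exits through the \emph{same} boundary edge after looping — is covered by one of the finitely many local identities, and that the left-multiplication convention for building $M_{\mathfrak a}$ is respected so that the local replacement can legitimately be performed inside the long product \eqref{eqM}.
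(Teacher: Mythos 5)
The paper does not actually prove this lemma---it is imported from \cite{ChPen,ChSh,ChM1}---and the argument given there follows exactly your plan: decompose an MCG element into flips and verify, case by case, that each local subword of the product \eqref{eqM} is unchanged when the five shear coordinates involved in the flip are transformed by the (classical or quantum) Chekhov--Fock rules. So your strategy is the right one, and your remarks about the untouched $\pi_j$-edges, the monogon/$F_\omega$ words, the quantum pentagon relation and the reduction of a general MCG element to a composition of flips are all on target.

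There is, however, a concrete error in your key identity. With the paper's conventions one computes
$$
X_{a}\,R\,X_{Z}\,L\,X_{b}=\left(\begin{array}{cc}
e^{(a+Z-b)/2} & -e^{(a+Z+b)/2}\\
e^{(-a+Z-b)/2}+e^{(-a-Z-b)/2} & -e^{(-a+Z+b)/2}
\end{array}\right),
$$
whose four entries are all nonzero, while every word $X_{a'}\,T\,X_{b'}$ with $T\in\{L,R\}$ has a vanishing corner entry ($X_{a'}LX_{b'}$ has zero $(2,2)$-entry and $X_{a'}RX_{b'}$ has zero $(1,1)$-entry); hence no choice of $a',b'$ makes your identity $X_{a}RX_{Z}LX_{b}=X_{a'}LX_{b'}$ true. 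The identities that actually govern the flip pair turns of the \emph{same} type,
$$
X_{a}\,R\,X_{Z}\,R\,X_{b}=X_{a'}\,R\,X_{b'},\qquad a'=a-\log\bigl(1+e^{-Z}\bigr),\quad b'=b+\log\bigl(1+e^{Z}\bigr),
$$
together with the $L$-mirror, and they require matching two independent matrix entries, not the single scalar relation you propose. The mixed words $X_aRX_ZLX_b$ and $X_aLX_ZRX_b$ belong to paths entering and leaving through opposite sides of the quadrilateral; such a path crosses the diagonal both before and after the flip, so the relevant identity there has the form $X_aRX_ZLX_b=X_{a'}T_1X_{-Z}T_2X_{b'}$ --- a configuration absent from your enumeration. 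Once the case list and the turn-matrix bookkeeping are corrected (and the analogous $F_\omega$ identities for monogons are written down), the remainder of your argument goes through as in \cite{ChPen,ChSh,ChM1}.
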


\begin{definition}\label{def3}
\cite{ChM1} \ \ A {\it complete geodesic lamination (CGL)}\/ is the set of all the edges of all ideal triangles constituting an ideal triangle decomposition of $\Sigma_{g,s,n}$ with vertices at bordered cusps. We call {\it algebraic} CGL the collection of all $\lambda$-lengths of the elements in the CGL.
\end{definition}

\begin{lemma}\label{lem:2}
\cite{ChM1}\ \ Every algebraic CGL can be identified with a seed of a quantum cluster algebra of geometric type \cite{BZ,ChM1}; the corresponding
$\lambda$-lengths enjoy homogeneous commutation relations among themselves.
\end{lemma}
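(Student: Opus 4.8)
The plan is to exhibit the algebraic CGL as the set of cluster variables of a single quantum cluster seed, so that the asserted homogeneous relations become precisely the $q$-commutation relations of the ambient quantum torus. Fix an ideal-triangle decomposition $\Delta$ of $\Sigma_{g,s,n}$ with all vertices at bordered cusps and every hole without cusps enclosed in a monogon, and take as candidate seed the $\lambda$-lengths $\{\lambda_e\}$ of Definition~\ref{def2} attached to the edges $e$ of $\Delta$, declaring the $\lambda$-lengths of the bordered arcs along the boundary, together with the Casimir parameters $\omega_\beta$, to be frozen---this is what makes the cluster algebra of geometric type. The exchange matrix $B_\Delta$ is the signed adjacency matrix of $\Delta$, so that a flip of an internal edge is the Ptolemy relation, that is, an ordinary mutation, whereas a flip of a monogon edge is governed by the higher-degree, $\omega_\beta$-dependent exchange polynomial of \cite{ChSh}, so that when cusp-free holes are present one works with a generalized quantum cluster algebra. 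That the geometric flips act on $\{\lambda_e\}$ by these (generalized) Ptolemy relations is Penner's $\lambda$-length formalism in the bordered-cusp setting as developed in \cite{ChM1}; Lemma~\ref{lem:1} ensures that the whole prescription is consistent at the level of monodromy data.

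It then remains to produce the quantum commutation matrix $\Lambda=(\Lambda_{ee'})$ entering $\lambda_e\lambda_{e'}=q^{\Lambda_{ee'}}\lambda_{e'}\lambda_e$ and to verify that $(\{\lambda_e\},B_\Delta,\Lambda)$ is indeed a quantum seed. Here I would pass through the quantum Teichm\"uller space: the extended shear coordinates $\{Z_\alpha,\pi_j\}$ of the dual spine $\Gamma_{g,s,n}$ have constant commutators (Chekhov--Fock), and the quantum ordering built into Definition~\ref{def1}, with its scalings $L\to q^{1/4}L$, $R\to q^{-1/4}R$, is the one descending from the quantum Teichm\"uller algebra. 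Combining this with the standard relation expressing each exponentiated shear coordinate as a cross-ratio monomial $e^{Z_\alpha}=\pm\,q^{\bullet}\prod_e\lambda_e^{(B_\Delta)_{\alpha e}}$ in the $\lambda$-lengths (the $\mathcal X$-coordinates as functions of the $\mathcal A$-coordinates, see \cite{FG1}) turns the known $q$-commutation of the $e^{Z_\alpha/2}$ into a linear system for $\Lambda$; its consistency is exactly the Berenstein--Zelevinsky compatibility $B_\Delta^{T}\Lambda=[\,D\mid 0\,]$ for a positive diagonal $D$ \cite{BZ}, and the ambiguity left by that relation is fixed by the explicit signs and $q$-powers of the edge and turning matrices. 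One then checks---equivalently, in the classical limit, using Penner's constant-coefficient form of the Weil--Petersson $2$-form in $\lambda$-length coordinates---that $\{\lambda_e\}$ are log-canonical with commutation matrix $\Lambda$, which is the homogeneity asserted in the lemma. Since a (generalized) quantum mutation carries a quantum seed to a quantum seed, the conclusion then holds for every CGL, not just for the chosen $\Delta$.

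The main obstacle is the uniform verification of compatibility: one must check that the linear system built from the cross-ratio monomials $\prod_e\lambda_e^{(B_\Delta)_{\alpha e}}$ is consistent and that its solution---pinned down by the $q$-power bookkeeping---is independent of the chosen decomposition $\Delta$ and mutation-covariant, so that one really obtains a quantum cluster algebra rather than an isolated quantum torus. A secondary technical point is the treatment of degenerate configurations---self-folded triangles, triangles sharing two edges, and the monogons around cusp-free holes---where the naive adjacency matrix and the Ptolemy relation must be replaced by the exchange data of a generalized cluster algebra in the sense of \cite{ChSh} and one must confirm that homogeneity persists because the $\omega_\beta$ are central Casimirs. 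All of this is worked out in \cite{ChM1,ChSh}, to which we refer for the details.
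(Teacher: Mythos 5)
The paper does not prove Lemma~\ref{lem:2} at all: it is quoted from \cite{ChM1}, and your proposal reconstructs essentially the argument of that reference, using exactly the ingredients the present paper records as Lemmas~\ref{lem:3} and~\ref{lem:4} (the monomial correspondence between $\lambda$-lengths and exponentiated shear coordinates, the signed-adjacency exchange matrix, frozen boundary variables, and the generalized exchange relations of \cite{ChSh} at monogons). The only simplification worth noting is that the homogeneity of the commutation relations, and the matrix $\Lambda$ itself, follow immediately from the \emph{forward} direction of Lemma~\ref{lem:3} --- each $\lambda$-length is a monomial in the $e^{\pm Z_\alpha/2}$, $e^{\pi_j/2}$, which have constant commutators, so any two $\lambda$-lengths $q$-commute with a computable integer power of $q$ --- whereas your route of inverting the cross-ratio relations $e^{Z_\alpha}=\prod_e\lambda_e^{(B_\Delta)_{\alpha e}}$ determines $\Lambda$ only up to the kernel of $B_\Delta^{T}$ and then needs the extra bookkeeping you describe; the Berenstein--Zelevinsky compatibility $B_\Delta^{T}\Lambda=[\,D\mid 0\,]$ of \cite{BZ} is then a verification rather than the defining equation for $\Lambda$.
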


\begin{lemma}\label{lem:3}
\cite{MSW2,MW,ChM1}\ \ For the fat graph $\Gamma_{g,s,n}$ dual to the corresponding partition of $\Sigma_{g,s,n}$ into ideal triangles,
the relations between $\lambda$-lengths from the corresponding CGL and exponentiated extended shear coordinates $\{e^{\pm Z_\alpha},e^{+\pi_j/2}\}$ are 1-1 and monoidal in both directions.
\end{lemma}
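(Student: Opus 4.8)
The plan is to realise \emph{both} directions as explicit monomial maps, assembled one CGL arc at a time, and then to verify that the two integer exponent matrices so produced are mutually inverse.

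\textbf{Forward direction (shear coordinates to $\lambda$-lengths).} I would fix a CGL arc $\mathfrak a$, i.e.\ an edge of the ideal triangulation to which $\Gamma_{g,s,n}$ is dual, and first check that the path in $\Gamma_{g,s,n}$ homotopic to $\mathfrak a$ is \emph{short}: precisely because $\Gamma_{g,s,n}$ is the spine of \emph{this} triangulation, the path enters $\Gamma_{g,s,n}$ at the one-valent vertex carrying $\pi_i$, makes at most two turns at the trivalent vertices flanking the internal edge dual to $\mathfrak a$, crosses that edge at most once (and, when $\mathfrak a$ bounds a monogon, winds once around the associated $F_{\omega_\beta}$-loop), and exits along the edge carrying $\pi_j$. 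Consequently the product $M_{\mathfrak a}$ of Definition~\ref{def1} collapses to a product of at most five of the explicit $2\times 2$ matrices $X_\pi,L,R,X_{Z_{\mathfrak a}},F_{\omega_\beta}$, and a one-line computation of its upper-left entry as in Definition~\ref{def2} gives
\[
\lambda_{\mathfrak a}=e^{(\pi_j-\pi_i\mp Z_{\mathfrak a})/2}
\]
for an internal or a boundary arc (the sign recording whether the two turns read $LR$ or $RL$), a Laurent monomial in the exponentiated extended shear coordinates; for a monogon arc one instead obtains the generalized exchange expression $e^{-Z_{\mathfrak a}}+\omega_\beta$, which is exactly why the ambient structure must be the generalized cluster algebra of \cite{ChSh} and within which ``monoidal'' is to be read, as in \cite{ChM1}.

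\textbf{Backward direction ($\lambda$-lengths to shear coordinates).} Next I would invert the system. For an internal edge with shear coordinate $Z_\alpha$, letting $\mathfrak b,\mathfrak c,\mathfrak d,\mathfrak e$ be the four sides of the quadrilateral formed by the two triangles adjacent to the dual arc, taken in cyclic order, the classical Penner/Fock shear--$\lambda$ formula $e^{Z_\alpha}=\lambda_{\mathfrak b}\lambda_{\mathfrak d}(\lambda_{\mathfrak c}\lambda_{\mathfrak e})^{-1}$ is again a Laurent monomial; and the boundary-arc relations $\lambda_{\mathfrak b}=e^{(\pi_j-\pi_i)/2}$ together with one interior relation anchored at each cusp solve for every $e^{\pi_j/2}$ as a Laurent monomial in the CGL $\lambda$-lengths. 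Equivalently, one reads the integer exponent matrix of the forward map directly off \eqref{eqM} and checks that it is unimodular, so that its inverse is again integral and the backward map is monoidal as well.

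\textbf{Conclusion and expected obstacle.} Since the number of CGL arcs equals the number of extended shear coordinates $\{Z_\alpha,\pi_j\}$ by an Euler-characteristic count, the two maps are monomial maps between algebraic tori of equal dimension whose exponent matrices are mutually inverse over $\mathbb Z$, hence each is an isomorphism (the Casimirs $\omega_\beta$ being held fixed on both sides), which is the claim. I expect the genuinely delicate points to be: (i) the ``short dual path'' claim --- proving it rigorously forces one to pin down the combinatorics of the one-valent vertices and of the $\pi_j$-edges in $\Gamma_{g,s,n}$, and is exactly where the hypothesis that $\Gamma_{g,s,n}$ is dual to \emph{this} triangulation (and not merely MCG-equivalent to it) is used; and (ii) the unimodularity of the exponent matrix, i.e.\ verifying on the nose that Penner's shear-from-$\lambda$ formulas invert the matrix-entry formulas, together with the bookkeeping needed to absorb the monogon/$\omega_\beta$ contributions into the generalized cluster structure.
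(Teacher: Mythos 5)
First, note that the paper offers no proof of Lemma~\ref{lem:3}: it is quoted from \cite{MSW2,MW,ChM1}, so there is no internal argument to compare against. Your overall architecture --- realise both directions as monomial maps and check that the two integer exponent matrices are mutually inverse, using the count that both coordinate systems have cardinality $6g-6+3s+2n$ --- is the right shape for a proof and is presumably close to what the references do. However, the central claim of your forward direction is false, and it is precisely the point you yourself flagged as delicate. The fat-graph path homotopic to a CGL arc is \emph{not} short in general: each bordered cusp carries a single $\pi$-edge attached at one specific corner of the fan of triangles incident to that cusp, so the path must first spiral through that fan from the anchor to the arc's direction, crossing the duals of all intervening triangulation edges, before it ever reaches the edge dual to $\mathfrak a$. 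Concretely, for the fan triangulation of an ideal pentagon (triangles $(1,2,3)$, $(1,3,4)$, $(1,4,5)$ with internal edges $Z_1,Z_2$ dual to the diagonals $(1,3),(1,4)$), the arc $(1,4)$ has monodromy datum $X_{\pi_4}LX_{Z_2}RX_{Z_1}RX_{\pi_1}$, whose upper-left entry is $e^{(\pi_4-Z_2-Z_1-\pi_1)/2}$: it involves $Z_1$ as well as the dual coordinate $Z_2$. So your formula $\lambda_{\mathfrak a}=e^{(\pi_j-\pi_i\mp Z_{\mathfrak a})/2}$ is wrong, and with it the exponent matrix you propose to invert. A symptom of this is that your two directions are mutually inconsistent: if every $\lambda$ had the form you claim, the Penner cross-ratio $\lambda_{\mathfrak b}\lambda_{\mathfrak d}(\lambda_{\mathfrak c}\lambda_{\mathfrak e})^{-1}$ would have exponent $\mp Z_{\mathfrak b}\mp Z_{\mathfrak d}\pm Z_{\mathfrak c}\pm Z_{\mathfrak e}$ plus $\pi$-terms, not $2Z_\alpha$, so the two exponent matrices you write down are not inverse to each other.

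What a correct proof actually requires in the forward direction is a structural statement about the long products \eqref{eqM}: for a CGL arc all turns occurring before the crossing of the dual edge are of one type and all turns after it are of the other type (the path hugs the starting cusp, crosses once, then hugs the terminating cusp), and for products of the shape $X_{\pi_j}\,t'\,X_{W_m}\cdots t'\,X_{Z}\,t\,X_{V_1}\cdots t\,X_{\pi_i}$ with $\{t,t'\}=\{L,R\}$ the upper-left entry collapses to a single Laurent monomial in $e^{\pm Z_\alpha/2},e^{\pm\pi_j/2}$ --- this collapse is a genuine computation (for other turn patterns, e.g.\ $X_{\pi_j}LX_{W}LX_{Z}RX_{\pi_i}$, the upper-left entry is a binomial), and it, together with the bookkeeping of which edges are crossed, determines the true exponent matrix. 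Your backward direction then needs the correspondingly corrected expressions for the $e^{\pi_j/2}$ (boundary arcs also generically cross internal edges, so $\lambda_{\mathfrak b}=e^{(\pi_j-\pi_i)/2}$ fails for them too). Until the forward monomials are computed correctly, the unimodularity/mutual-inverseness step cannot be carried out, so as written the proof does not go through.
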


\begin{lemma}\label{lem:4}
\cite{ChM1}\ \ Every $\lambda$-length in every CGL (a seed) is a polynomial from $\mathbb Z_+[e^{\pm Z_\alpha},e^{+\pi_j/2},\omega_\beta]$ of exponentiated shear coordinates of any given seed. Every quantum $\lambda$-length is a Hermitian operator represented by an ordered polynomial from $\mathbb Z_+[e^{\pm Z_\alpha},e^{+\pi_j/2},\omega_\beta,q^{\pm 1/4}]$.
\end{lemma}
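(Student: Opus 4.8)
The plan is to combine the explicit matrix presentation \eqref{eqM} with the (generalized) Ptolemy relations governing flips of ideal triangulations, and then to propagate positivity by induction on the number of flips separating two CGLs. Fix once and for all a ``seed'': a CGL $T_0$ with spine $\Gamma_{g,s,n}$, exponentiated extended shear coordinates $\{e^{\pm Z_\alpha},e^{+\pi_j/2}\}$ and Casimir coefficients $\omega_\beta$. By Lemma~\ref{lem:1} and Lemma~\ref{lem:2} the algebraic CGL attached to any ideal-triangle decomposition $T$ is a seed of a quantum cluster algebra of geometric type, generalized in the sense of \cite{ChSh} whenever $F_\omega$-loops are present; the seeds attached to different $T$ are related by flips of internal edges, which are the corresponding (quantum) mutations, and any two such decompositions (with vertices at bordered cusps) are connected by a finite chain of flips of internal edges.

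For $T=T_0$ the statement is exactly the monomial direction of Lemma~\ref{lem:3}: each $\lambda$-length in the CGL of $T_0$ is a Laurent monomial in $\{e^{\pm Z_\alpha},e^{+\pi_j/2}\}$ and in the $\omega_\beta$ (a $q$-monomial after the scaling $L\to q^{1/4}L$, $R\to q^{-1/4}R$), in which the pendant variables $e^{+\pi_j/2}$ and the $\omega_\beta$ occur with non-negative exponents, so it lies in the claimed ring. For the inductive step, a flip of an internal edge replaces a $\lambda$-length $\lambda_e$ by $\lambda_{e'}$ obeying the (generalized) Ptolemy relation; solving for $\lambda_{e'}$ writes it as $\lambda_e^{-1}$ times a subtraction-free (quantum) polynomial in the remaining $\lambda$-lengths of $T$. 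By the Laurent phenomenon for (generalized) cluster algebras, $\lambda_{e'}$ is then a Laurent polynomial in the $\lambda$-lengths of the CGL of $T_0$; by Laurent positivity for cluster algebras of surface type and their generalized analogues (cf.\ \cite{MSW2,MW,ChM1}) its coefficients lie in $\mathbb Z_+$, respectively in $\mathbb Z_+[q^{\pm1/4}]$ for the quantum ordering making the resulting operator Hermitian. Substituting the monomial change of variables of Lemma~\ref{lem:3} for the $\lambda$-lengths of $T_0$ turns this into an element of $\mathbb Z_+[e^{\pm Z_\alpha},e^{+\pi_j/2},\omega_\beta]$, resp.\ of $\mathbb Z_+[e^{\pm Z_\alpha},e^{+\pi_j/2},\omega_\beta,q^{\pm1/4}]$.

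The delicate point --- and the main obstacle --- is to see that one really lands in a \emph{polynomial} ring in the pendant variables $e^{+\pi_j/2}$ and in the $\omega_\beta$ rather than in a Laurent ring, i.e.\ that all negative powers of these variables cancel along the recursion. Here I would use that pendant edges are dual to the one-valent vertices of $\Gamma_{g,s,n}$ and that $F_\omega$-loops are attached to monogons, so that none of them is ever flipped; hence $e^{+\pi_j/2}$ and $\omega_\beta$ are frozen (coefficient) variables of the cluster structure and, as such, appear with non-negative exponents in every cluster variable. Concretely, I would track degrees through the Ptolemy recursion from the base case: each $\lambda$-length is homogeneous of degree one in each of the two pendant variables attached to the endpoints of the arc --- reflecting the affine dependence of the signed length $\ell_{\mathfrak a}$ on the decorating horocycles --- and polynomial in the $\omega_\beta$, while the Ptolemy relation $\lambda_{e'}\lambda_e=\lambda_a\lambda_c+\lambda_b\lambda_d$ respects these gradings, so that the denominator $\lambda_e$ divides the numerator within each graded piece. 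Verifying these homogeneity statements and their compatibility with the generalized Ptolemy moves of \cite{ChSh} (and with the quantum ordering) is the technical heart of the argument; the remaining ingredients are the by-now-standard cluster-algebraic Laurent-positivity package.
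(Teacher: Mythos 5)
A preliminary remark on the comparison: the paper gives no proof of Lemma~\ref{lem:4} at all --- it is imported from \cite{ChM1}, where it is obtained by a direct computation with the word (\ref{eqM}). One shows, by induction on the length of the word, that every partial product of the blocks $LX_{Z}$, $RX_{Z}$ and $X_{Z}F_{\omega}X_{Z}$ applied to the initial edge matrix has a fixed sign pattern, with each entry a sign-definite Laurent polynomial in $e^{Z_\alpha/2}$, $e^{\pi_j/2}$, $\omega_\beta$ (the blocks $LX_Z$, $RX_Z$ are triangular with positive diagonal and a single non-positive off-diagonal entry, and $X_ZF_\omega X_Z$ has entries $\omega$, $-e^{Z}$, $e^{-Z}$, $0$, so the chequerboard sign pattern propagates); the $(1,1)$ entry of the full product is then a single sign-definite polynomial, and in the quantum case the scaling $L\to q^{1/4}L$, $R\to q^{-1/4}R$ only decorates each monomial with a power of $q^{1/4}$. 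Your route --- monomial base case from Lemma~\ref{lem:3}, then induction on flips via (generalized) Ptolemy exchange relations and Laurent positivity --- is genuinely different from this.

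The trouble is that, within the logic of this paper, your route is circular, and the one step you correctly flag as delicate is not actually supplied. Immediately after Lemma~\ref{lem:4} the authors state that Lemmata~\ref{lem:3} and \ref{lem:4} \emph{imply} the Laurent and positivity phenomenon for $\lambda$-lengths in all seeds; the positivity results you invoke from \cite{MSW2,MW,ChM1} are themselves proved via the explicit matrix/snake-graph expansions, i.e.\ via the very statement under proof. To break the circle you would need an independent subtraction-free Laurent expansion theorem for the \emph{quantum, generalized} cluster algebras of \cite{ChSh} (where the $\omega_\beta$ enter the exchange polynomials themselves), which is not an off-the-shelf input. Moreover, the grading you propose in order to control the frozen variables fails already in the base case: with the matrices as printed, the $(1,1)$ entry of $X_{\pi_2}LX_{Z_1}RX_{\pi_1}$ is $e^{(\pi_2-\pi_1-Z_1)/2}$, so a $\lambda$-length has degree $-1$, not $+1$, in the pendant variable at its source cusp; and the assertion that frozen variables occur only with non-negative exponents does not follow from the Laurent phenomenon alone, which a priori permits arbitrary Laurent monomials in the coefficients. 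The direct sign-pattern induction on the word (\ref{eqM}) sidesteps all of these issues and is the proof intended by the reference.
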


Note that  Lemmata~\ref{lem:3} and \ref{lem:4} then immediately implies the Laurent and positivity phenomenon for $\lambda$-lengths in all seeds. 

The main result of this paper is as follows.

\begin{theorem}\label{thm:1}
Provided the extended shear coordinates $\{Z_\alpha,\pi_j\}$ enjoy the standard constant commutation or Poisson relations \cite{ChF,ChM1}, the
set of (classical or quantum) monodromy data $M_{\mathfrak a}$ determined by formula (\ref{eqM}) for any (directed) arc $\mathfrak a$ from a fixed CGL satisfies the following properties:
\begin{enumerate}
  \item[{\bf(a)}]
  The (classical or quantum) matrices $M_{\mathfrak a}$ corresponding to arcs from the same CGL satisfy $R$-matrix permutation relations of
Fock--Rosly  type \eqref{basic1}, \eqref{basic1.1}, \eqref{rel-comp1}\, \eqref{rel-comp2}, \eqref{rel-comp3}, \eqref{rel-comp4}, \eqref{21-43}, \eqref{32-41}, \eqref{31-42}.
  \item[{\bf(b)}] All elements of every monodromy datum  $M_{\mathfrak a}$ are polynomials of $e^{\pm Z_\alpha/2}$, $e^{\pm\pi_j/2}$, $q^{\pm 1/4}$ and $\omega_\beta$ with integers coefficients and, by Lemma~\ref{lem:3}, are sign-definite Laurent polynomials in $\lambda$-lengths  of any given CGL (for any seed of the corresponding cluster algebra of geometric type) and polynomials in $\omega_\beta$.
  \item[{\bf(c)}] The Borel subgroup restriction (see (\ref{fga})) is realized by reducing all monodromy data 
  $M_{\mathfrak a}$ that  correspond to paths between two neighbouring bordered cusps (or the same cusp if a hole contains only one cusp) that
  go along the hole boundary with the hole being to the left to the form with entries $m_{{\mathfrak a}_{i,j}}=0,\forall i+j\ge k+2$. This restriction is Poisson  for any system of monodromy data (and survives the generalisation to the case of $SL_k(\mathbb R)$-monodromy data).
\end{enumerate}
\end{theorem}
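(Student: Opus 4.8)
The plan is to reduce the whole theorem to Lemmata~\ref{lem:1}--\ref{lem:4} together with the constant commutation/Poisson relations of the extended shear coordinates, handling the three parts as follows. For part~\textbf{(b)}, I would argue that each matrix element of $M_{\mathfrak a}$ is, by construction~\eqref{eqM}, a product of the explicit matrices $X_A$, $L$, $R$, $F_\omega$ (with the scaling $L\to q^{1/4}L$, $R\to q^{-1/4}R$ in the quantum case); since every entry of $X_A$ is $\pm e^{\pm A/2}$ and every entry of $L,R,F_\omega$ lies in $\mathbb Z_+[\omega_\beta]$ (up to the $q^{\pm 1/4}$ scalings), each entry of the ordered product is manifestly a polynomial in $e^{\pm Z_\alpha/2}$, $e^{\pm\pi_j/2}$, $q^{\pm1/4}$, $\omega_\beta$ with integer coefficients. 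Combining this with the monoidal $1$-$1$ correspondence of Lemma~\ref{lem:3} between exponentiated shear coordinates and $\lambda$-lengths, and with Lemma~\ref{lem:4}, one gets that each entry is a sign-definite Laurent polynomial in the $\lambda$-lengths of any fixed CGL and a polynomial in $\omega_\beta$; the sign-definiteness is inherited because the substitution of Lemma~\ref{lem:3} sends positive objects to positive objects in both directions. This part is essentially bookkeeping.

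For part~\textbf{(a)}, the strategy is: (i) use Lemma~\ref{lem:1} to reduce, for each ordered pair of arcs in the fixed CGL, the pair of matrix products to a normal form supported on a small subsurface (an ideal quadrilateral, pentagon, or the relevant one/two-holed piece), so that only finitely many graph configurations need to be checked; (ii) in each such normal form, the entries of $M_{\mathfrak a}$ and $M_{\mathfrak b}$ are concrete words in the $X_A$, $L$, $R$, $F_\omega$, and the shear coordinates appearing satisfy the constant commutation relations read off from $\Gamma_{g,s,n}$; (iii) push these constant relations through the products using the Baker--Campbell--Hausdorff identity $e^Xe^Y=q\,e^{X+Y}$ for $[X,Y]=2\pi i\hbar$ (and the obvious commutativity when edges are non-adjacent) to verify the $R$-matrix permutation relations \eqref{basic1}--\eqref{31-42} entry by entry, with the classical statement obtained as the $\hbar\to0$ limit. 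The key point making this finite and uniform is that ``arcs from the same CGL'' means the arcs are edges of one fixed ideal-triangle decomposition, so any two of them bound a subsurface glued out of at most a couple of triangles; the role of the reduction in step~(i) is precisely to confine the computation to these elementary pieces.

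For part~\textbf{(c)}, I would take an arc ${\mathfrak a}$ of the type described—running from one bordered cusp to a neighbouring one (or to itself) along a hole boundary with the hole on the left—and use Lemma~\ref{lem:1} to bring $M_{\mathfrak a}$ to one of the canonical forms listed before Definition~\ref{def1}, namely $M=X_{\pi_j}LX_{\pi_i}$ (clockwise) or $M=X_{\pi_j}RX_{\pi_i}$; a direct multiplication of these explicit $2\times2$ matrices shows the $(2,2)$ entry vanishes, which is exactly the condition $m_{{\mathfrak a}_{i,j}}=0$ for $i+j\ge k+2$ when $k=2$. Interpreting this vanishing as landing in a coset of the unipotent Borel subgroup $U_j$, the quotient~\eqref{fga} is realized by imposing these conditions, and one must check that the ideal they generate in the Poisson (or quantum) algebra of part~\textbf{(a)} is a Poisson ideal—i.e.\ that the bracket of any such $m_{{\mathfrak a}_{2,2)}}$ with any monodromy-data entry again lies in the ideal generated by the $m_{\cdot,2}$-type relations; this follows by the same BCH computation as in~\textbf{(a)} applied to the canonical forms, and the argument is written so that it only uses the block structure of the $R$-matrix, hence extends verbatim to $SL_k(\mathbb R)$.

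The main obstacle I anticipate is step~(ii)--(iii) of part~\textbf{(a)}: although each individual configuration is a finite computation, assembling the constant shear-coordinate commutators into the precise $R$-matrix permutation relations \eqref{basic1}--\eqref{31-42}—keeping track of orientations of the two arcs, which edges they share, and the quantum ordering induced by the $q^{\pm1/4}$ scalings—requires a careful and somewhat lengthy case analysis, and this is where the real content of the theorem lies; the positivity in \textbf{(b)} and the Borel reduction in \textbf{(c)} are comparatively short once \textbf{(a)} is in place.
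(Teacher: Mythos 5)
Your treatment of part (b) matches the paper's (which simply declares (b) ``clear from construction''), and your $k=2$ computation giving $m_{22}=0$ for the canonical form $X_{\pi_2}LX_{\pi_1}$ is correct. The divergences are in (a) and (c).

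For (a), the paper does \emph{not} verify all nine relations by direct entry-by-entry computation. It verifies only the two \emph{basic} relations \eqref{basic1} and \eqref{basic2} by explicit multiplication on the local configurations of Figures~\ref{fi:basic} and \ref{fi:basic2}, and then obtains every composite relation \eqref{rel-comp1}--\eqref{rel-comp4} and \eqref{21-43}--\eqref{31-42} algebraically via the groupoid property: an arc between two cusps is factored as $M_i^j=(M^{j})^{-1}M_i$, and a closed arc as $M_j^{-1}M_i$, where the factors pairwise satisfy the basic relations and the distant pieces commute. Your plan to check each of the nine relations directly on a normal form is in principle feasible but much heavier, and your justification for finiteness --- that two arcs of a CGL ``bound a subsurface glued out of at most a couple of triangles'' --- fails for closed arcs returning to the same cusp, which can wind arbitrarily around the surface. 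What actually localizes the computation is the factorization $M_i=QX_{Z_1}RX_\pi$, $M_j=SX_{Z_2}LX_\pi$ with $[Q,M_j]=[S,M_i]=[S,Q]=0$: only the edges adjacent to a shared cusp fail to commute. You gesture at this (``commutativity when edges are non-adjacent'') but the small-subsurface framing as stated would not survive scrutiny, and without the two-basic-relations-plus-groupoid reduction the case analysis you flag as the main obstacle is considerably larger than it needs to be.

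For (c), the substantive claim is that the constraints $m_{i,j}=0$ for $i+j\ge k+2$ cut out a Poisson (quantum) ideal for \emph{all} $k$, and your appeal to ``the same BCH computation\dots using the block structure of the $R$-matrix'' does not supply the argument. The paper's proof (Lemma~\ref{Poisson}) rests on a specific index-monotonicity observation: a boundary-parallel arc is leftmost at its starting cusp and rightmost at its terminating cusp, and the off-diagonal part of the $R$-matrix \eqref{Rn} is supported on $e_{ij}\otimes e_{ji}$ with $j>i$; consequently every term in the bracket of $m_{i,j}$ with any other generator contains a factor $m_{k,j}$ or $m_{i,l}$ with $k\ge i$ and $l\ge j$, so the ideal generated by the entries below the main antidiagonal is closed under the bracket. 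Without this (or an equivalent) argument, the extension from your $k=2$ computation to $SL_k$ remains an assertion rather than a proof.
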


Observe that thanks to Lemma \ref{lem:3} and point (b) of  Theorem \ref{thm:1},  the extended shear coordinates $\{Z_\alpha,\pi_j\}$ can be straightforwardly complexified and therefore  the complex representation space $\mathcal R_{g,s,n}^2$ is endowed with the same Poisson structure.

In the following subsections we derive point (a) of the theorem (points (b) and (c) will be clear from construction) in the quantum case (the classical one follows by taking the semi-classical limit).
Let us stress that while the final formulae for the quantum commutation relations have been derived from the Fock-Rosly bracket \cite{FR} already in \cite{AGS} (see also \cite{BR,BR1,RS}), our approach allows to express all matrices in Darboux coordinates. For this reason we repeat the derivation of the relations from \cite{AGS} here.

In the following, we call {\it open arc} an arc joining different cusps, we call {\it closed arc} an arc joining the same cusp.

\subsection{Basic relations}

In this subsection we use our geometric construction to find the two Fock--Rosly commutation relations from which all other commutation relations can be found using the groupoid property.

The first relation pertains to the case where two different arcs  $\mathfrak a_i$ and $\mathfrak a_j$ start at the same bordered cusp $\pi$ and then go to the left and to the right respectively never colliding again, see Fig.~\ref{fi:basic}.
\begin{figure}[h]
\begin{pspicture}(-2,-1.5)(2,2){
\newcommand{\PATTERNONE}{%
{\psset{unit=1}
\psline[linewidth=20pt,linecolor=black](0,0)(0,-1.2)
\psline[linewidth=12pt,linecolor=black](0,-1)(-0.8,-1.8)
\psline[linewidth=12pt,linecolor=black](0,-1)(0.8,-1.8)
\psline[linewidth=10pt,linecolor=black,linestyle=dashed](0.8,-1.8)(1.5,-1.8)
\psline[linewidth=10pt,linecolor=black,linestyle=dashed](0.8,-1.8)(0.8,-2.5)
\psline[linewidth=10pt,linecolor=black,linestyle=dashed](-0.8,-1.8)(-1.5,-1.8)
\psline[linewidth=10pt,linecolor=black,linestyle=dashed](-0.8,-1.8)(-0.8,-2.5)
\psline[linewidth=18pt,linecolor=white](0,0.1)(0,-1.2)
\psline[linewidth=10pt,linecolor=white](0,-1)(-0.8,-1.8)
\psline[linewidth=10pt,linecolor=white](0,-1)(0.8,-1.8)
\psline[linewidth=8pt,linecolor=white](-0.8,-1.8)(-1.6,-1.8)
\psline[linewidth=8pt,linecolor=white](-0.8,-1.8)(-0.8,-2.6)
\psline[linewidth=8pt,linecolor=white](0.8,-1.8)(1.6,-1.8)
\psline[linewidth=8pt,linecolor=white](0.8,-1.8)(0.8,-2.6)
\put(-0.2,0.1){$i$}
\put(0.07,0.1){$j$}
\rput(-0.5, -0.4){$_{\pi}$}
\rput(-.7, -1.1){$_{Z_1}$}
\rput(.7, -1.1){$_{Z_2}$}
\psline[linewidth=1.5pt,linecolor=red](-0.15,0)(-0.15,-1.05)
\psline[linewidth=1.5pt,linecolor=blue](0.15,0)(0.15,-1.05)
\psline[linewidth=1.5pt,linecolor=red](-0.93,-1.81)(-0.15,-1.03)
\psline[linewidth=1.5pt,linecolor=blue,linestyle=dashed](0.9,-1.8)(1.5,-1.8)
\psline[linewidth=1.5pt,linecolor=red,linestyle=dashed](-0.9,-1.8)(-1.5,-1.8)
\psline[linewidth=1.5pt,linecolor=blue](0.93,-1.81)(0.15,-1.03)

}
}
\rput(-5,1.5){\PATTERNONE}
}
\end{pspicture}
\caption{Fat graph on a Riemann surface with at least one cusp of coordinate $\pi$. We denote the standard shear coordinates by $Z_1$ and $Z_2$. The dashed part is the rest of the Riemann surface.}
\label{fi:basic}
\end{figure}
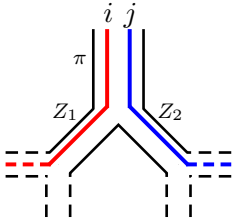

The matrices corresponding to arcs are:
\beq
M_i=Q X_{Z_1}RX_\pi;\qquad M_j=S X_{Z_2} L X_\pi,\qquad i<j
\label{MM}
\eeq
where we use the notation that $i<j$ when the arc $\mathfrak a_i$ is on the left of the arc $\mathfrak a_j$  and the matrices $Q$ and $S$ correspond to portions of the arcs that never intersect the arcs $\mathfrak a_j$ and $\mathfrak a_i$ respectively, so that
$$
[Q,M_j]=0, \quad [S,M_i]=0, \quad\hbox{and}\quad  [S,Q]=0.
$$
To deduce the commutation relations of the matrices (\ref{MM}) we use the following commutation relations:
$$
[Z_2,Z_1]=[Z_2,\pi]=[\pi,Z_1]=2\pi i\hbar,\
$$
$$
e^{Z_1/2}e^{\pi/2}=q^{-1/2} e^{\pi/2}e^{Z_1/2}, \quad e^{Z_2/2}e^{\pi/2}=q^{1/2} e^{\pi/2}e^{Z_2/2}, \quad e^{Z_2/2}e^{Z_1/2}=q^{-1/2} e^{Z_1/2}e^{Z_2/2}
$$
Then,  by a direct calculation, we obtain the following relations:
\bea
\label{basic1}\sheet{1}{M_i}\otimes \sheet{2}{M_j}=\sheet{2}{M_j}\otimes \sheet{1}{M_i}R_{12}(q)&\qquad i<j,\\
\label{basic1.1}\sheet{1}{M_i}\otimes \sheet{2}{M_j}R_{12}(q)^T=\sheet{2}{M_j}\sheet{1}{M_i}&\qquad i>j,
\eea
where
$R_{12}(q)=\sheet{1}e_{ii}\otimes \sheet{2}{e_{jj}}q^{(-1)^{i+j}}+\sheet{1}{e_{12}}\sheet{2}{e_{21}}(q^{1/2}-q^{-3/2})$ is the Kulish--Sklyanin $R$-matrix; explicitly
\beq
R_{12}(q)=
\left(
\begin{array}{cc|cc}
q^{1/2}  &  0 & 0  & 0\\
0  & q^{-1/2}  & q^{1/2}-q^{-3/2}  &0 \\
\hline
0  & 0  &  q^{-1/2} & 0\\
0  &0&0& q^{1/2}
\end{array}
\right)
\label{R12}
\eeq
Observe that as long as the arc $\mathfrak a_i$ is on the left of the arc $\mathfrak a_j$, by MCG transformations we can always flip edges in our fat-graph to match this situation.

We obtain the second basic relation for entries of the \emph{same} monodromy datum corresponding to an open arc: every such matrix (except the case where it borders a hole) can be brought by quantum MCG transformations to the form (see figure \ref{fi:basic2}):
$$
M:=X_{\pi_2} L X_{Z_1} R X_{\pi_1},\quad [\pi_1,Z_1]=[\pi_2,Z_1]=2\pi i \hbar,\ [\pi_1,\pi_2]=0,
$$
thus giving the following commutation relation:
\beq
R_{12}^{\text{T}} \sheet{1}{M}\otimes \sheet{2}{M}=\sheet{2}{M}\otimes \sheet{1}{M} R_{12}
\label{basic2}
\eeq
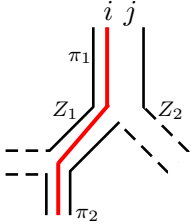
\begin{figure}[h]
\begin{pspicture}(-2,-1.5)(2,2){
\newcommand{\PATTERNONE}{%
{\psset{unit=1}
\psline[linewidth=20pt,linecolor=black](0,0)(0,-1.2)
\psline[linewidth=12pt,linecolor=black](0,-1)(-0.8,-1.8)
\psline[linewidth=12pt,linecolor=black,linestyle=dashed](0,-1)(0.8,-1.8)
\psline[linewidth=10pt,linecolor=black,linestyle=dashed](-0.8,-1.8)(-1.5,-1.8)
\psline[linewidth=10pt,linecolor=black](-0.8,-1.8)(-0.8,-2.5)
\psline[linewidth=18pt,linecolor=white](0,0.1)(0,-1.2)
\psline[linewidth=10pt,linecolor=white](0,-1)(-0.8,-1.8)
\psline[linewidth=10pt,linecolor=white](0,-1)(0.8,-1.8)
\psline[linewidth=8pt,linecolor=white](-0.8,-1.8)(-1.6,-1.8)
\psline[linewidth=8pt,linecolor=white](-0.8,-1.8)(-0.8,-2.6)
\psline[linewidth=8pt,linecolor=white](0.8,-1.8)(1.6,-1.8)
\psline[linewidth=8pt,linecolor=white](0.8,-1.8)(0.8,-2.6)
\put(-0.2,0.1){$i$}
\put(0.07,0.1){$j$}
\rput(-0.5, -0.4){$_{\pi_1}$}
\rput(-.7, -1.1){$_{Z_1}$}
\rput(.7, -1.1){$_{Z_2}$}
\rput(-0.4, -2.5){$_{\pi_2}$}
\psline[linewidth=1.5pt,linecolor=red](-0.15,0)(-0.15,-1.05)
\psline[linewidth=1.5pt,linecolor=red](-0.8,-1.81)(-0.13,-1.03)
\psline[linewidth=1.5pt,linecolor=red](-0.8,-1.8)(-0.8,-2.5)
}
}
\rput(-5,1.5){\PATTERNONE}
}
\end{pspicture}
\caption{Fat graph on a Riemann surface with two cusps of coordinates $\pi_1$ and $\pi_2$. We denote the standard shear coordinates by $Z_1,Z_2,\dots$. The dashed part is the rest of the Riemann surface.}
\label{fi:basic2}
\end{figure}

\subsection{Composite relations}
Here we explain how to obtain all other relations from the basic relations (\ref{basic1}) and (\ref{basic2}) using the groupoid property.

Using basic relation (\ref{basic1}) we can deduce what happens when two open arcs meet at two different cusps. Let these arcs be oriented in the same way, i.e. they originate at the same cusp and end at the same cusp. Inverting orientation corresponds to inverting a matrix. We denote the above two matrices by $M_i^j$ and $M_k^l$, where $i$ and $k$ are in the source cusp and $j,l$ in the target cusp; at each cusp
we have a linear ordering of indices originated from orientation of the surface. We can think of $M_i^j= M^{j^{-1}} M_i$ and $M_k^l= M^{l^{-1}} M_k  $ where:
$$
[M^j,M_k]=[M_i,M^l]=0,
$$  and the pairs $M_i,M_k$ and $M^j,M^l$ enjoy the commutation relations (\ref{basic1}). In this way we obtain
\bea
&&
\sheet{1}{M_i^j}\sheet{2}{M_k^l}= R_{12} \sheet{2}{M_k^l}\sheet{1}{M_i^j}  R_{12}\quad\hbox{for }i<k, j<l,\label{rel-comp1}\\
&&
\sheet{1}{M_i^j}\sheet{2}{M_k^l}= R_{12} \sheet{2}{M_k^l}\sheet{1}{M_i^j}  R_{12}^{-T}\quad\hbox{for }i>k, j<l,\label{rel-comp2}\\\
&&
\sheet{1}{M_i^j}\sheet{2}{M_k^l}= R_{12}^{-T} \sheet{2}{M_k^l}\sheet{1}{M_i^j}  R_{12}\quad\hbox{for }i<k, j>l,\label{rel-comp3}\\\
&&
\sheet{1}{M_i^j}\sheet{2}{M_k^l}= R_{12}^{-T} \sheet{2}{M_k^l}\sheet{1}{M_i^j}  R_{12}^{-T}\quad\hbox{for }i>k, j>l,\label{rel-comp4}\
\eea

Consider the case of two monodromy data corresponding to two closed arcs (starting and terminating at the same cusp) having no intersections inside the surface. Then their four ends can be uniquely enumerated from $1$ to $4$ depending on the order in which the corresponding arcs enter the cusp, see Fig.~\ref{fi:MMM}, where the index $1$ corresponds to the rightmost thread and $4$ to the leftmost thread. We have three different cases all of which can be obtained from basic relation (\ref{basic1}); $M_{ij}$ indicates the arc that starts at thread $i$ and terminates at $j$ having the structure $M^{-1}_j M_i$ with $M_i$ and $M_j$ from (\ref{MM}). In all examples below we take $i>j$, that is, the corresponding arc goes clockwise along the surface:
\begin{align}
&\sheet{1}{M_{21}} R_{12}\sheet{2} M_{43}R_{12}^{-1} =R_{12} \sheet{2}{M_{43}}R^{-1}_{12} \sheet{1}{M_{21}}
\label{21-43}
\\
&\sheet{1}{M_{41}} R_{12}^{-\text{T}}\sheet{2} M_{32}R_{12}^{\text{T}} =R_{12} \sheet{2}{M_{32}}R^{-1}_{12} \sheet{1}{M_{41}}
\label{32-41}
\\
&\sheet{1}{M_{31}} R_{12}^{-\text{T}}\sheet{2} M_{42}R_{12}^{-1} =R_{12} \sheet{2}{M_{42}}R^{-1}_{12} \sheet{1}{M_{31}}
\label{31-42}
\end{align}

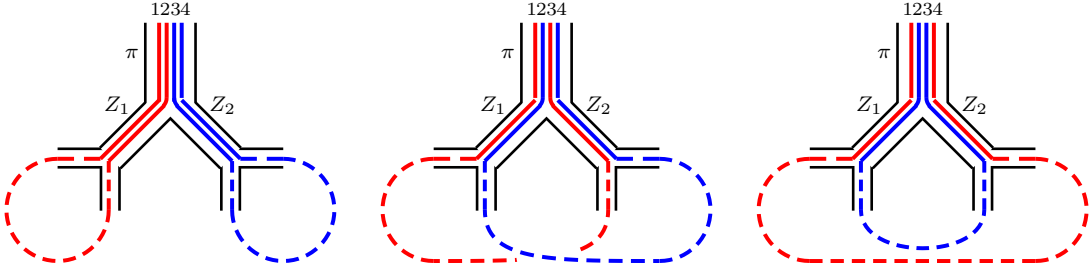
\begin{figure}[h]
\begin{pspicture}(-6,-2)(6,2){
\newcommand{\PATTERNONE}{%
{\psset{unit=1}
\psline[linewidth=20pt,linecolor=black](0,0)(0,-1.2)
\psline[linewidth=12pt,linecolor=black](0,-1)(-0.8,-1.8)
\psline[linewidth=12pt,linecolor=black](0,-1)(0.8,-1.8)
\psline[linewidth=8pt,linecolor=black](-0.8,-1.8)(-1.5,-1.8)
\psline[linewidth=8pt,linecolor=black](-0.8,-1.8)(-0.8,-2.5)
\psline[linewidth=8pt,linecolor=black](0.8,-1.8)(1.5,-1.8)
\psline[linewidth=8pt,linecolor=black](0.8,-1.8)(0.8,-2.5)
\psline[linewidth=18pt,linecolor=white](0,0.1)(0,-1.2)
\psline[linewidth=10pt,linecolor=white](0,-1)(-0.8,-1.8)
\psline[linewidth=10pt,linecolor=white](0,-1)(0.8,-1.8)
\psline[linewidth=6pt,linecolor=white](-0.8,-1.8)(-1.6,-1.8)
\psline[linewidth=6pt,linecolor=white](-0.8,-1.8)(-0.8,-2.6)
\psline[linewidth=6pt,linecolor=white](0.8,-1.8)(1.6,-1.8)
\psline[linewidth=6pt,linecolor=white](0.8,-1.8)(0.8,-2.6)
\put(-0.2,0.1){\makebox(0,0)[cb]{\hbox{{\tiny$1$}}}}
\put(-0.07,0.1){\makebox(0,0)[cb]{\hbox{{\tiny$2$}}}}
\put(0.07,0.1){\makebox(0,0)[cb]{\hbox{{\tiny$3$}}}}
\put(0.2,0.1){\makebox(0,0)[cb]{\hbox{{\tiny$4$}}}}
\rput(-0.5, -0.4){$_{\pi}$}
\rput(-.7, -1.1){$_{Z_1}$}
\rput(.7, -1.1){$_{Z_2}$}
\psline[linewidth=1.5pt,linecolor=red](-0.15,0)(-0.15,-1.05)
\psline[linewidth=1.5pt,linecolor=red](-0.05,0)(-0.05,-1)
\psline[linewidth=1.5pt,linecolor=blue](0.05,0)(0.05,-1.05)
\psline[linewidth=1.5pt,linecolor=blue](0.15,0)(0.15,-1)
\psarc[linewidth=1.5pt,linestyle=dashed,linecolor=red](-0.2,-1){0.15}{-45}{0}
\psline[linewidth=1.5pt,linecolor=red](-0.93,-1.81)(-0.15,-1.03)
\psline[linewidth=1.5pt,linecolor=red](-0.08,-1.1)(-0.82,-1.84)
\psline[linewidth=1.5pt,linestyle=dashed,linecolor=red](-0.93,-1.81)(-1.5,-1.81)
\psline[linewidth=1.5pt,linestyle=dashed,linecolor=red](-0.82,-2.49)(-0.82,-1.84)
\psarc[linewidth=1.5pt,linestyle=dashed,linecolor=blue](0.2,-1){0.15}{180}{225}
\psline[linewidth=1.5pt,linecolor=blue](0.93,-1.81)(0.15,-1.03)
\psline[linewidth=1.5pt,linecolor=blue](0.08,-1.1)(0.82,-1.84)
\psline[linewidth=1.5pt,linestyle=dashed,linecolor=blue](0.93,-1.81)(1.5,-1.81)
\psline[linewidth=1.5pt,linestyle=dashed,linecolor=blue](0.82,-2.49)(0.82,-1.84)
\psarc[linewidth=1.5pt,linestyle=dashed,linecolor=blue](1.5,-2.49){0.68}{-180}{90}
\psarc[linewidth=1.5pt,linestyle=dashed,linecolor=red](-1.5,-2.49){0.68}{90}{360}
}
}
\newcommand{\PATTERNTWO}{%
{\psset{unit=1}
\psline[linewidth=20pt,linecolor=black](0,0)(0,-1.2)
\psline[linewidth=12pt,linecolor=black](0,-1)(-0.8,-1.8)
\psline[linewidth=12pt,linecolor=black](0,-1)(0.8,-1.8)
\psline[linewidth=8pt,linecolor=black](-0.8,-1.8)(-1.5,-1.8)
\psline[linewidth=8pt,linecolor=black](-0.8,-1.8)(-0.8,-2.5)
\psline[linewidth=8pt,linecolor=black](0.8,-1.8)(1.5,-1.8)
\psline[linewidth=8pt,linecolor=black](0.8,-1.8)(0.8,-2.5)
\psline[linewidth=18pt,linecolor=white](0,0.1)(0,-1.2)
\psline[linewidth=10pt,linecolor=white](0,-1)(-0.8,-1.8)
\psline[linewidth=10pt,linecolor=white](0,-1)(0.8,-1.8)
\psline[linewidth=6pt,linecolor=white](-0.8,-1.8)(-1.6,-1.8)
\psline[linewidth=6pt,linecolor=white](-0.8,-1.8)(-0.8,-2.6)
\psline[linewidth=6pt,linecolor=white](0.8,-1.8)(1.6,-1.8)
\psline[linewidth=6pt,linecolor=white](0.8,-1.8)(0.8,-2.6)
\put(-0.2,0.1){\makebox(0,0)[cb]{\hbox{{\tiny$1$}}}}
\put(-0.07,0.1){\makebox(0,0)[cb]{\hbox{{\tiny$2$}}}}
\put(0.07,0.1){\makebox(0,0)[cb]{\hbox{{\tiny$3$}}}}
\put(0.2,0.1){\makebox(0,0)[cb]{\hbox{{\tiny$4$}}}}
\rput(-0.5, -0.4){$_{\pi}$}
\rput(-.7, -1.1){$_{Z_1}$}
\rput(.7, -1.1){$_{Z_2}$}
\psline[linewidth=1.5pt,linecolor=red](-0.15,0)(-0.15,-1)
\psline[linewidth=1.5pt,linecolor=blue](-0.05,0)(-0.05,-1)
\psline[linewidth=1.5pt,linecolor=red](0.05,0)(0.05,-1)
\psline[linewidth=1.5pt,linecolor=blue](0.15,0)(0.15,-1)
\psarc[linewidth=1.5pt,linecolor=blue](-0.2,-1){0.15}{-45}{0}
\psline[linewidth=1.5pt,linecolor=red](-0.93,-1.81)(-0.15,-1.03)
\psline[linewidth=1.5pt,linecolor=blue](-0.08,-1.1)(-0.82,-1.84)
\psline[linewidth=1.5pt,linestyle=dashed,linecolor=red](-0.93,-1.81)(-1.5,-1.81)
\psline[linewidth=1.5pt,linestyle=dashed,linecolor=blue](-0.82,-2.49)(-0.82,-1.84)
\psarc[linewidth=1.5pt,linestyle=dashed,linecolor=red](0.2,-1){0.15}{180}{225}
\psline[linewidth=1.5pt,linecolor=blue](0.93,-1.81)(0.15,-1.03)
\psline[linewidth=1.5pt,linecolor=red](0.08,-1.1)(0.82,-1.84)
\psline[linewidth=1.5pt,linestyle=dashed,linecolor=blue](0.93,-1.81)(1.5,-1.81)
\psline[linewidth=1.5pt,linestyle=dashed,linecolor=red](0.82,-2.49)(0.82,-1.84)
\psarc[linewidth=1.5pt,linestyle=dashed,linecolor=blue](1.5,-2.49){0.68}{-90}{90}
\psarc[linewidth=1.5pt,linestyle=dashed,linecolor=red](-1.5,-2.49){0.68}{90}{270}
\psbezier[linewidth=1.5pt,linestyle=dashed,linecolor=red](0.82,-2.49)(0.82,-3.17)(0,-3.17)(-1.5,-3.17)
\pscircle[linecolor=white,fillstyle=solid, fillcolor=white](-.,-3.1){.4}
\psbezier[linewidth=2.5pt,linestyle=solid,linecolor=white](-0.82,-2.49)(-0.82,-3.17)(0,-3.17)(1.5,-3.17)
\psbezier[linewidth=1.5pt,linestyle=dashed,linecolor=blue](-0.82,-2.49)(-0.82,-3.17)(0,-3.17)(1.5,-3.17)
}
}
\newcommand{\PATTERNTHREE}{%
{\psset{unit=1}
\psline[linewidth=20pt,linecolor=black](0,0)(0,-1.2)
\psline[linewidth=12pt,linecolor=black](0,-1)(-0.8,-1.8)
\psline[linewidth=12pt,linecolor=black](0,-1)(0.8,-1.8)
\psline[linewidth=8pt,linecolor=black](-0.8,-1.8)(-1.5,-1.8)
\psline[linewidth=8pt,linecolor=black](-0.8,-1.8)(-0.8,-2.5)
\psline[linewidth=8pt,linecolor=black](0.8,-1.8)(1.5,-1.8)
\psline[linewidth=8pt,linecolor=black](0.8,-1.8)(0.8,-2.5)
\psline[linewidth=18pt,linecolor=white](0,0.1)(0,-1.2)
\psline[linewidth=10pt,linecolor=white](0,-1)(-0.8,-1.8)
\psline[linewidth=10pt,linecolor=white](0,-1)(0.8,-1.8)
\psline[linewidth=6pt,linecolor=white](-0.8,-1.8)(-1.6,-1.8)
\psline[linewidth=6pt,linecolor=white](-0.8,-1.8)(-0.8,-2.6)
\psline[linewidth=6pt,linecolor=white](0.8,-1.8)(1.6,-1.8)
\psline[linewidth=6pt,linecolor=white](0.8,-1.8)(0.8,-2.6)
\put(-0.2,0.1){\makebox(0,0)[cb]{\hbox{{\tiny$1$}}}}
\put(-0.07,0.1){\makebox(0,0)[cb]{\hbox{{\tiny$2$}}}}
\put(0.07,0.1){\makebox(0,0)[cb]{\hbox{{\tiny$3$}}}}
\put(0.2,0.1){\makebox(0,0)[cb]{\hbox{{\tiny$4$}}}}
\rput(-0.5, -0.4){$_{\pi}$}
\rput(-.7, -1.1){$_{Z_1}$}
\rput(.7, -1.1){$_{Z_2}$}
\psline[linewidth=1.5pt,linecolor=red](-0.15,0)(-0.15,-1)
\psline[linewidth=1.5pt,linecolor=blue](-0.05,0)(-0.05,-1)
\psline[linewidth=1.5pt,linecolor=blue](0.05,0)(0.05,-1)
\psline[linewidth=1.5pt,linecolor=red](0.15,0)(0.15,-1)
\psarc[linewidth=1.5pt,linecolor=blue](-0.2,-1){0.15}{-45}{0}
\psline[linewidth=1.5pt,linecolor=red](-0.93,-1.81)(-0.15,-1.03)
\psline[linewidth=1.5pt,linecolor=blue](-0.08,-1.1)(-0.82,-1.84)
\psline[linewidth=1.5pt,linestyle=dashed,linecolor=red](-0.93,-1.81)(-1.5,-1.81)
\psline[linewidth=1.5pt,linestyle=dashed,linecolor=blue](-0.82,-2.49)(-0.82,-1.84)
\psarc[linewidth=1.5pt,linestyle=dashed,linecolor=blue](0.2,-1){0.15}{180}{225}
\psline[linewidth=1.5pt,linecolor=red](0.93,-1.81)(0.15,-1.03)
\psline[linewidth=1.5pt,linecolor=blue](0.08,-1.1)(0.82,-1.84)
\psline[linewidth=1.5pt,linestyle=dashed,linecolor=red](0.93,-1.81)(1.5,-1.81)
\psline[linewidth=1.5pt,linestyle=dashed,linecolor=blue](0.82,-2.49)(0.82,-1.84)
\psarc[linewidth=1.5pt,linestyle=dashed,linecolor=red](1.5,-2.49){0.68}{-90}{90}
\psarc[linewidth=1.5pt,linestyle=dashed,linecolor=red](-1.5,-2.49){0.68}{90}{270}
\psline[linewidth=1.5pt,linestyle=dashed,linecolor=red](1.5,-3.17)(-1.5,-3.17)
\psbezier[linewidth=1.5pt,linestyle=dashed,linecolor=blue](0.82,-2.49)(0.82,-3.17)(-0.82,-3.17)(-0.82,-2.49)
}
}
\rput(-5,1.5){\PATTERNONE}
\rput(0,1.5){\PATTERNTWO}
\rput(5,1.5){\PATTERNTHREE}
}
\end{pspicture}
\caption{\small Three cases of nonintersecting geodesic arcs terminating at the same bordered cusp. }
\label{fi:MMM}
\end{figure}

The last commutation relation is for the entries of the same matrix $M_{ij}$ corresponding to a closed arc. We can obtain this arc as the product of two arcs: one starts at the same cusp and terminates at another cusp and the second one starts at the second cusp and terminates at the first cusp going along a different path in the surface, in such a way that the composition gives the arc that starts and terminates at the same cusp. We obtain
\beq
R_{12}^{\text{T}}\sheet{1} {M_{ij}} R_{12}^{-\text{T}}\sheet{2}{M_{ij}}=\sheet{2}{M_{ij}} R_{12}^{-1} \sheet{1}{M_{ij}} R_{12}.
\label{H}
\eeq
Note that for the $R$-matrix of form (\ref{R12}), we have another, equivalent way of writing the {\em same} quantum commutation relations for elements of the matrix $M_{ij}$ encoded
in (\ref{H}):
\beq
\sheet{1} {M_{ij}} R_{12}^{-\text{T}}\sheet{2}{M_{ij}}R_{12}^{\text{T}}= R_{12}\sheet{2}{M_{ij}} R_{12}^{-1} \sheet{1}{M_{ij}}.
\label{H1}
\eeq
We are free to use any of relations (\ref{H}), (\ref{H1}) on our discretion.
\begin{remark}
That (\ref{H1}) is equivalent to (\ref{H}) implies the following commutation relation:
$$
R_{12}^{\text{T}}R_{12}\sheet{2}{M_{ij}} R_{12}^{-1} \sheet{1}{M_{ij}}=\sheet{2}{M_{ij}} R_{12}^{-1} \sheet{1}{M_{ij}}R_{12}R_{12}^{\text{T}}\ \hbox{and}\
R_{12}R_{12}^{\text{T}}\sheet{1}{M_{ij}} R_{12}^{-\text{T}} \sheet{2}{M_{ij}}=\sheet{1}{M_{ij}} R_{12}^{-\text{T}} \sheet{2}{M_{ij}}R_{12}^{\text{T}}R_{12}.
$$
\end{remark}

\section{Classical and quantum $R$-matrix structures of $SL_k(\mathbb C)$ monodromy data}
\label{s:SLk}

\subsection{$R$-matrix relations in the $SL_k(\mathbb C)$ case}

For generic dimension $k$, consider the following special case of trigonometric $R$-matrix generalizing the Kulish--Sklyanin matrix $R_{12}$ to the case of $SL_k(\mathbb C)$:
\beq
R_{12}(q)=\sum_{i,j}q^{-1/2}\sheet{1}{e_{ii}}\otimes \sheet{2}{e_{jj}}+\sum_i (q^{1/2}-q^{-1/2})\sheet{1}{e_{ii}}\otimes \sheet{2}{e_{ii}}+\sum_{j>i}
(q^{1/2}-q^{-3/2})\sheet{1}{e_{ij}}\otimes \sheet{2}{e_{ji}}
\label{Rn}
\eeq
Following  \cite{AGS}, we use the $R$-matrix relations (\ref{basic1}) and (\ref{basic2}) as well as all ``composite'' relations (\ref{21-43})--(\ref{H1}) to define a quasi-Poisson structure on the representation space $\mathcal R_{g,s,n}^k$. The following theorem shows that actually this is a Poisson structure - we call it {\it  Poisson algebra of monodromy data}:

\begin{theorem}\label{Jacobi}
For any $R$-matrix $R_{\alpha\beta}(q)$ that satisfies the QYBE $R_{12}R_{13}R_{23}=R_{23}R_{13}R_{12}$, and such that $R_{\alpha\beta}^{\text{T}}=R_{\beta\alpha}$,  relations (\ref{H}) and (\ref{H1}) are equivalent and the relations (\ref{21-43})--(\ref{H1}) satisfy quantum Jacobi property.
\end{theorem}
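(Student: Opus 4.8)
For the equivalence of \eqref{H} and \eqref{H1}, which uses only the hypothesis $R_{\alpha\beta}^{\mathrm{T}}=R_{\beta\alpha}$ and not the QYBE, I would conjugate the whole identity \eqref{H} by the flip operator $P_{12}\in\mathrm{End}(\mathbb{C}^{k}\otimes\mathbb{C}^{k})$. Conjugation by $P_{12}$ is an algebra automorphism that fixes the entries of $M_{ij}$, hence exchanges $\sheet{1}{M_{ij}}\leftrightarrow\sheet{2}{M_{ij}}$ and sends $R_{12},R_{12}^{-1},R_{12}^{\mathrm{T}},R_{12}^{-\mathrm{T}}$ to $R_{21},R_{21}^{-1},R_{21}^{\mathrm{T}},R_{21}^{-\mathrm{T}}$; applied to \eqref{H} it yields
\[
R_{21}^{\mathrm{T}}\,\sheet{2}{M_{ij}}\,R_{21}^{-\mathrm{T}}\,\sheet{1}{M_{ij}}=\sheet{1}{M_{ij}}\,R_{21}^{-1}\,\sheet{2}{M_{ij}}\,R_{21}.
\]
Substituting $R_{21}=R_{12}^{\mathrm{T}}$, $R_{21}^{\mathrm{T}}=R_{12}$, $R_{21}^{-1}=R_{12}^{-\mathrm{T}}$, $R_{21}^{-\mathrm{T}}=R_{12}^{-1}$ — each an instance of $R_{\alpha\beta}^{\mathrm{T}}=R_{\beta\alpha}$ — turns this into \eqref{H1}, and since $P_{12}^{2}=\mathrm{id}$ the converse follows as well.

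For the quantum Jacobi property the plan is to reduce the whole list \eqref{basic1}--\eqref{H1} to the two building-block relations. By the groupoid property exploited throughout Section~\ref{s:SL2} and the MCG invariance of Lemma~\ref{lem:1}, every monodromy datum in a fixed CGL is a word in the elementary half-arc matrices $M_{i}$ of the shape \eqref{MM} (an open arc being $M_{j}^{-1}M_{i}$, a closed arc $M_{j}^{-1}M_{i}$ with $i,j$ at the same cusp), and every composite relation \eqref{rel-comp1}--\eqref{H1} is a formal consequence of the FRT-type relations \eqref{basic1}--\eqref{basic1.1}, the reflection-type relation \eqref{basic2} (equivalently \eqref{H}/\eqref{H1}), the ultralocal commutativity of non-overlapping half-arcs, and the fact that reversing the orientation of an arc inverts its matrix. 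Since quantum consistency — well-definedness of the normal ordering, equivalently the Jacobi identity for the semiclassical bracket — is inherited by passing to subalgebras and to polynomial expressions in the generators, it is enough to check the building-block relations on triples. For three co-initial half-arcs $M_{i},M_{j},M_{k}$, reordering $\sheet{1}{M_{i}}\,\sheet{2}{M_{j}}\,\sheet{3}{M_{k}}$ into reversed order along the two standard routes by \eqref{basic1}--\eqref{basic1.1} produces the $R$-words $R_{23}R_{13}R_{12}$ and $R_{12}R_{13}R_{23}$ (with $R$ replaced by $R^{\mathrm{T}}$ in the factors coming from pairs in the opposite cyclic order at the cusp), and these agree by the QYBE together with its transpose $R_{32}R_{31}R_{21}=R_{21}R_{31}R_{32}$, obtained by transposing the QYBE in all three tensor factors and applying $R_{\alpha\beta}^{\mathrm{T}}=R_{\beta\alpha}$. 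For three copies of one closed- or open-arc matrix $M$, the relation \eqref{basic2} produces, on reversing $\sheet{1}{M}\,\sheet{2}{M}\,\sheet{3}{M}$, prefactors $R_{21}^{-1}R_{31}^{-1}R_{32}^{-1}$ versus $R_{32}^{-1}R_{31}^{-1}R_{21}^{-1}$ and suffixes $R_{23}R_{13}R_{12}$ versus $R_{12}R_{13}R_{23}$; equality of the suffixes is the QYBE, equality of the prefactors is again the transposed QYBE, and mixed triples reduce to the previous cases after writing $M_{ij}=M_{j}^{-1}M_{i}$. The classical Jacobi identity then follows by the semiclassical limit $R=1+\hbar r+O(\hbar^{2})$, under which QYBE and its transpose both degenerate to the classical Yang--Baxter equation for $r$ and the cancellations above become the Jacobi identity for the induced Poisson bracket; alternatively one identifies \eqref{basic1}--\eqref{H1} with a specialization of the Alekseev--Grosse--Schomerus graph-algebra relations on the spine $\Gamma_{g,s,n}$ and invokes the consistency proved in \cite{AGS,FR}.

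The only non-formal point, and the one I would regard as the main obstacle, is the consistency of the reflection-type relation \eqref{basic2}: in contrast with the pure FRT relations, its self-consistency is not a consequence of the QYBE alone but genuinely needs the symmetry $R_{\alpha\beta}^{\mathrm{T}}=R_{\beta\alpha}$ to convert the prefactor identity into the transposed QYBE. The accompanying difficulty is organizational — keeping the assignment of $R$ versus $R^{\mathrm{T}}$ versus their inverses correct for every relative orientation of the arcs and every cyclic order of their endpoints at a common cusp (the configurations of Figure~\ref{fi:MMM} and their analogues) — and structuring the argument so that one verifies only the two building-block relations rather than each of \eqref{rel-comp1}--\eqref{H1} separately is what keeps it finite.
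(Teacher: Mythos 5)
Your proposal is correct, but it is organized differently from the paper's own proof. The paper's argument for Theorem~\ref{Jacobi} consists of (i) the remark that consistency is inherited from the Fock--Rosly construction, and (ii) two explicit verifications carried out directly on the composite relations: a full cycle of reorderings of $\sheet1{M_{21}}R_{12}^{-\mathrm{T}}\sheet2{M_{21}}R_{13}R_{23}\sheet3{M_{43}}$ (two entries from one monodromy datum, one from another) and a normal-form computation for three copies of the same $M$, both using only the QYBE, its relabellings/derivatives, and $R_{\alpha\beta}^{\mathrm{T}}=R_{\beta\alpha}$. Notably, the printed proof never actually addresses the claimed equivalence of \eqref{H} and \eqref{H1}; your flip-conjugation argument supplies exactly that missing piece, cleanly, and correctly isolates that only the symmetry $R_{\alpha\beta}^{\mathrm{T}}=R_{\beta\alpha}$ (not the QYBE) is needed for it. For the Jacobi property you push the verification down to the building blocks \eqref{basic1}, \eqref{basic1.1}, \eqref{basic2} and check triple reorderings there, matching the $R$-words $R_{23}R_{13}R_{12}$ and $R_{12}R_{13}R_{23}$ (and the inverse-transposed prefix words $R_{32}R_{31}R_{21}=R_{21}R_{31}R_{32}$) via the QYBE and its three-fold transpose; this is essentially the paper's computation conjugated through the groupoid decomposition $M_i^j=M^{j\,-1}M_i$, and it buys a uniform treatment of all of \eqref{rel-comp1}--\eqref{H1} at the price of the (true but unargued) assertion that consistency of the generating exchange relations is inherited by all derived relations --- an assertion at the same level of rigor as the paper's own appeal to \cite{FR,AGS}. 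Your diagnosis that the reflection-type relation \eqref{basic2} is the one place where $R_{\alpha\beta}^{\mathrm{T}}=R_{\beta\alpha}$ is genuinely indispensable (to convert the prefactor identity into the transposed QYBE) is exactly where the paper's second explicit computation does its work.
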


\proof This is a consequence of the fact that this Poisson algebra comes from the Fock--Rosly one. However we would like to prove it directly at least in one case for sake of completeness: the case in which two entries come from the same monodromy datum  and the third one comes from another monodromy datum. We begin with
$$
\sheet1{M_{21}}R^{-\text{T}}_{12}\sheet2{M_{21}}R_{13}R_{23}\sheet3{M_{43}}
$$
We then have the following chain of equalities in which we use the basic QYBE $R_{12}R_{13}R_{23}=R_{23}R_{13}R_{12}$ and its derivatives:
$$
R^{\text{T}}_{12}R_{23}R_{13}:=R_{21}R_{23}R_{13}=R_{13}R_{23}R_{21}:=R_{13}R_{23}R^{\text{T}}_{12}
$$
and
$$
R_{13}R^{\text{-T}}_{12}R^{-1}_{23}=R^{-1}_{23}R^{\text{-T}}_{12}R_{13}
$$
(we underline the terms in which we use commutation relations or QYBE); note that every $\sheet{i}M$ commutes with $R_{jk}$ if $i\ne\{j,k\}$:
\begin{align}
&\underline{\sheet1{M_{21}}R^{-\text{T}}_{12}\sheet2{M_{21}}}R_{13}R_{23}\sheet3{M_{43}}
=R^{-\text{T}}_{12}\sheet2{M_{21}}R_{12}^{-1}\sheet1{M_{21}}\underline{R_{12}R_{13}R_{23}}\sheet3{M_{43}}\nonumber\\
=&R^{-\text{T}}_{12}\sheet2{M_{21}}R_{12}^{-1}\sheet1{M_{21}}R_{23}R_{13}R_{12}\sheet3{M_{43}}
=R^{-\text{T}}_{12}\sheet2{M_{21}}R_{12}^{-1}R_{23}\underline{\sheet1{M_{21}}R_{13}\sheet3{M_{43}}}R_{12}\nonumber\\
=&R^{-\text{T}}_{12}\sheet2{M_{21}}\underline{R_{12}^{-1}R_{23}R_{13}}\sheet3{M_{43}}R_{13}^{-1}\sheet1{M_{21}}R_{13}R_{12}
=R^{-\text{T}}_{12}\sheet2{M_{21}}R_{13}R_{23}R_{12}^{-1}\sheet3{M_{43}}R_{13}^{-1}\sheet1{M_{21}}R_{13}R_{12}\nonumber\\
=&R^{-\text{T}}_{12}R_{13}\underline{\sheet2{M_{21}}R_{23}\sheet3{M_{43}}}R_{12}^{-1}R_{13}^{-1}\sheet1{M_{21}}R_{13}R_{12}
=\underline{R^{-\text{T}}_{12}R_{13}R_{23}}\sheet3{M_{43}}R_{23}^{-1}\sheet2{M_{21}}\underline{R_{23}R_{12}^{-1}R_{13}^{-1}}\sheet1{M_{21}}R_{13}R_{12}\nonumber\\
=&R_{23}R_{13}R^{-\text{T}}_{12}\sheet3{M_{43}}R_{23}^{-1}R_{13}^{-1}\underline{\sheet2{M_{21}}R_{12}^{-1}\sheet1{M_{21}}}\underline{R_{23}R_{13}R_{12}}
=R_{23}R_{13}R^{-\text{T}}_{12}\sheet3{M_{43}}\underline{R_{23}^{-1}R_{13}^{-1}R_{12}^{\text{T}}}\sheet1{M_{21}}R_{12}^{-\text{T}}\sheet2{M_{21}}R_{13}R_{23}\nonumber\\
=&R_{23}\underline{R_{13}\sheet3{M_{43}}R_{13}^{-1}\sheet1{M_{21}}}R_{23}^{-1}R_{12}^{-\text{T}}\sheet2{M_{21}}R_{13}R_{23}
=R_{23}\sheet1{M_{21}}R_{13}\sheet3{M_{43}}\underline{R_{13}^{-1}R_{23}^{-1}R_{12}^{-\text{T}}}\sheet2{M_{21}}R_{13}R_{23}\nonumber\\
=&R_{23}\sheet1{M_{21}}R_{13}R_{12}^{-\text{T}}\underline{\sheet3{M_{43}}R_{23}^{-1}\sheet2{M_{21}}R_{23}}
=R_{23}\sheet1{M_{21}}\underline{R_{13}R_{12}^{-\text{T}}R_{23}^{-1}}\sheet2{M_{21}}R_{23}\sheet3{M_{43}}\nonumber\\
=&\sheet1{M_{21}}R_{12}^{-\text{T}}\sheet2{M_{21}}R_{13}R_{23}\sheet3{M_{43}}.\nonumber
\end{align}

We present one more calculation demonstrating Jacobi property for the same matrix $M$. We use just one form (\ref{H}) of the commutation relation. We begin with the same expression
$$
\sheet1MR_{12}^{-\text{T}} \sheet2M R_{13}^{-\text{T}}R_{23}^{-\text{T}}\sheet3M=\sheet1MR_{12}^{-\text{T}} R_{13}^{-\text{T}}\sheet2M R_{23}^{-\text{T}}\sheet3M
$$
We first transform the left-hand side:
\begin{align}
&\underline{\sheet1MR_{12}^{-\text{T}} \sheet2M} R_{13}^{-\text{T}}R_{23}^{-\text{T}}\sheet3M=
R_{12}^{-\text{T}}\sheet2MR_{12}^{-1} \sheet1M \underline{R_{12}R_{13}^{-\text{T}}R_{23}^{-\text{T}}}\sheet3M\nonumber\\
=&R_{12}^{-\text{T}}\sheet2MR_{12}^{-1}R_{23}^{-\text{T}} \underline{\sheet1M R_{13}^{-\text{T}}\sheet3M} R_{12}
=R_{12}^{-\text{T}}\sheet2M \underline{R_{12}^{-1}R_{23}^{-\text{T}}R_{13}^{-\text{T}}} \sheet3M R_{13}^{-1}\sheet1M R_{13}R_{12}
\nonumber\\
=&R_{12}^{-\text{T}}R_{13}^{-\text{T}}\underline{\sheet2M R_{23}^{-\text{T}} \sheet3M} R_{12}^{-1}R_{13}^{-\text{T}}\sheet1M R_{13}R_{12}
=R_{12}^{-\text{T}}R_{13}^{-\text{T}}R_{23}^{-\text{T}}\sheet3M R_{23}^{-1} \sheet2M \underline{R_{23} R_{12}^{-1}R_{13}^{-1}}\sheet1M R_{13}R_{12}
\nonumber\\
=&R_{12}^{-\text{T}}R_{13}^{-\text{T}}R_{23}^{-\text{T}}\sheet3M R_{23}^{-1} \sheet2M R_{13}^{-1} R_{12}^{-1}\sheet1M R_{23}R_{13}R_{12}
\nonumber
\end{align}
We now turn to the right-hand side:
\begin{align}
&\sheet1MR_{12}^{-\text{T}} R_{13}^{-\text{T}}\underline{\sheet2M R_{23}^{-\text{T}}\sheet3M}
=\sheet1M\underline{R_{12}^{-\text{T}} R_{13}^{-\text{T}}R_{23}^{-\text{T}}}\sheet3M R_{23}^{-1}\sheet2M R_{23}
\nonumber\\
=&R_{23}^{-\text{T}} \underline{\sheet1M  R_{13}^{-\text{T}}\sheet3M} R_{12}^{-\text{T}}R_{23}^{-1}\sheet2M R_{23}
=R_{23}^{-\text{T}} R_{13}^{-\text{T}}\sheet3M  R_{13}^{-1}\sheet1M \underline{R_{13}R_{12}^{-\text{T}}R_{23}^{-1}}\sheet2M R_{23}
\nonumber\\
=&R_{23}^{-\text{T}} R_{13}^{-\text{T}}\sheet3M  R_{13}^{-1}R_{23}^{-1}\underline{\sheet1M R_{12}^{-\text{T}}\sheet2M} R_{13}R_{23}
=R_{23}^{-\text{T}} R_{13}^{-\text{T}}\sheet3M \underline{R_{13}^{-1}R_{23}^{-1}R_{12}^{-\text{T}}}\sheet2M R_{12}^{-1}\sheet1M R_{12}R_{13}R_{23}
\nonumber\\
=&\underline{R_{23}^{-\text{T}} R_{13}^{-\text{T}}R_{12}^{-\text{T}}}\sheet3M R_{23}^{-1}\sheet2M R_{13}^{-1}R_{12}^{-1}\sheet1M \underline{R_{12}R_{13}R_{23}}
= R_{12}^{-\text{T}}R_{13}^{-\text{T}}R_{23}^{-\text{T}}\sheet3M R_{23}^{-1}\sheet2M R_{13}^{-1}R_{12}^{-1}\sheet1M R_{23}R_{13}R_{12},
\nonumber
\end{align}
which coincides with the final expression in transformations of the left-hand side. \endproof

\subsection{Decorated character variety}

In order to define a Poisson structure on the decorated character variety
$$
\mathcal M_{g,s,n}^k:=\mathcal R_{g,s,n}^k\slash_{\prod_{j=1}^n U_j},
$$
we prove that the quotient by unipotent Borel sub-groups is a Poisson reduction.

In the $SL_2(\mathbb C)$ case, we have the following Poisson reduction for monodromy data corresponding to passing along the hole boundary: if $M$ corresponds to a path along the boundary that goes clockwise (the hole is to the left w.r.t. the path direction), then $m_{22}=0$, i.e., the lower right element vanishes.

For generic $k\geq 2$, we have the following

\begin{lemma}\label{Poisson}
Consider the monodromy data  $M\in SL_k(\mathbb C)$ corresponding to paths that go clockwise along boundaries of holes (they may start and terminate at the same cusp if a hole has only one cusp). The reduction
\beq
\label{starstar}
m_{i,j}=0,\ i+j\ge k+2,
\eeq
i.e., all its entries below the main anti-diagonal vanish, is a Poisson reduction.
\end{lemma}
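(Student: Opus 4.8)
The plan is to verify that the subset $S\subset SL_k(\mathbb C)$ cut out by the equations $m_{i,j}=0$ for $i+j\ge k+2$ is a Poisson submanifold for the bracket defined by the relations \eqref{H}--\eqref{H1} together with the ``cross'' relations \eqref{basic1}--\eqref{31-42} coupling $M$ to any other monodromy datum of the same CGL. The key observation is that $S$ is precisely the set of matrices whose nonzero entries lie on or above the main anti-diagonal; this is the condition that characterises, geometrically, a monodromy datum $M=X_{\pi_2}L X_{Z_1}L\cdots L X_{\pi_1}$ (all turns to the left) traversing the boundary of a hole clockwise, as the edge matrix $X_A$ and the left-turn matrix $L$ are both ``lower anti-triangular'' in the relevant sense and their product remains so. So first I would record, using Definition~\ref{def1} and Lemma~\ref{lem:1}, that every such boundary-arc monodromy datum can be brought by MCG transformations to the canonical form in which $S$ is automatically realised, so that the reduction \eqref{starstar} is not an extra constraint but an identity; this takes care of point (b)-type positivity and shows $S$ is where these $M$ live.

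Next, the real content is to show that the Poisson bracket closes on the ideal $I\subset \mathcal O(SL_k)$ generated by $\{m_{i,j}:i+j\ge k+2\}$, i.e. that $\{I,\mathcal O\}\subset I$. Since the whole algebra of monodromy data comes from the Fock--Rosly bracket (Theorem~\ref{Jacobi}), it suffices to compute $\{m_{i,j},m_{k,l}\}$ and $\{m_{i,j},(M')_{k,l}\}$ from the $R$-matrix relations and check that whenever $i+j\ge k+2$ the right-hand side is a combination of entries $m_{\cdot,\cdot}$ again with row$+$column$\ge k+2$, possibly multiplied by arbitrary entries. Concretely I would write \eqref{H} (or equivalently \eqref{H1}) for the matrix $M$ in index form with the explicit $R_{12}(q)$ of \eqref{Rn}, and observe the structural fact that $R_{12}(q)$ and $R_{12}^{-1}$, $R_{12}^{\mathrm T}$, $R_{12}^{-\mathrm T}$ all preserve the flag filtration associated to the anti-diagonal: acting on $\mathbb C^k\otimes\mathbb C^k$, the off-diagonal term $\sum_{j>i}e_{ij}\otimes e_{ji}$ only moves weight between the two tensor factors in a way that keeps the total ``anti-diagonal degree'' non-increasing on the relevant span. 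Combining this with the fact that in $S$ the matrix $M$ annihilates the standard flag $\mathrm{span}(e_1,\dots,e_r)$ appropriately, one gets that each monomial on the right-hand side of the bracket relation contains at least one factor $m_{p,q}$ with $p+q\ge k+2$, hence lies in $I$.

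I would organise the anti-diagonal degree bookkeeping via a single grading: put $\deg(e_{i,j})=i+j-(k+1)$ on matrix entries, so $S=\{M: m_{i,j}=0\text{ whenever }\deg(e_{i,j})\ge 1\}$, and check that the quadratic relations \eqref{H}, \eqref{rel-comp1}--\eqref{31-42} are \emph{homogeneous of non-negative degree} in the sense that $\{m_{i,j},m_{k,l}\}$ is a sum of products $m_{p,q}m_{r,s}$ (and $m_{p,q}$ times an entry of $M'$) with $(p+q-(k+1))+(r+s-(k+1))\ge (i+j-(k+1))+(k+l-(k+1))$; since both summands on the left are $\ge 1$ when we start inside $I$, at least one factor on the right has degree $\ge 1$. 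This homogeneity is exactly what the Kulish--Sklyanin $R$-matrix \eqref{Rn} provides, because its only non-weight-preserving term $e_{ij}\otimes e_{ji}$ with $j>i$ trades a high-index entry in one slot for a low-index entry in the other, conserving the sum of indices. The $SL_k$ condition $\det M=1$ is compatible since the determinant, expanded along rows, is already a polynomial respecting this grading.

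\textbf{Main obstacle.} The delicate point is \emph{not} the individual bracket computations but checking that the anti-diagonal grading is genuinely respected by \emph{all} the composite relations \eqref{21-43}--\eqref{31-42} and \eqref{H1}, including the mixed ones where $M$ is a boundary datum and $M'$ is an arbitrary arc of the CGL threaded through the same cusps: there one must be careful that conjugating $M$ by the $R$-matrices (which is what \eqref{rel-comp1}--\eqref{31-42} effectively do) does not spoil the vanishing pattern, and that the linear ordering of the four cusp-ends used in Fig.~\ref{fi:MMM} is compatible with ``the hole is to the left''. I expect to handle this by reducing, via Lemma~\ref{lem:1}, to the canonical forms $M=X_{\pi_2}LX_{\pi_1}$ or $M=X_{\pi_2}RX_{\pi_1}$ for the boundary arc and then doing the $2\times 2$-block version of the degree argument explicitly; the general $SL_k$ case then follows because \eqref{Rn} has the same ``single index-conserving off-diagonal term'' structure as the $k=2$ matrix \eqref{R12}. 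Once closure $\{I,\mathcal O\}\subset I$ is established, Poisson reduction in the sense required for \eqref{fga} is immediate, and the statement that it ``survives the generalisation to $SL_k(\mathbb R)$'' is the real-form specialisation, which changes nothing in the computation.
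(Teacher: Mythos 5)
Your second paragraph states the correct target --- every monomial in $\{m_{i,j},\,\cdot\,\}$ must contain a factor $m_{p,q}$ of the \emph{same} matrix $M$ with $p+q\ge i+j$ --- but the mechanism you then propose, the single ``anti-diagonal degree'' grading $\deg(e_{i,j})=i+j-(k+1)$, does not deliver it, and this is a genuine gap. Conservation of the \emph{total} degree over both tensor slots cannot localise the large-degree factor to the constrained matrix: in a mixed bracket $\{m_{i,j},m'_{k,l}\}$ with $M'$ an arbitrary arc of the lamination, a term $m_{p,q}m'_{r,s}$ with $p+q$ small and $r+s$ large is perfectly consistent with your inequality, yet does not lie in the ideal $I$, because the entries of $M'$ are not subject to \eqref{starstar} and do not vanish. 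Moreover your justification ``since both summands on the left are $\ge 1$ when we start inside $I$'' presupposes that \emph{both} entries lie in $I$, i.e.\ it only addresses $\{I,I\}$, whereas the reduction needs $\{I,\mathcal O\}\subseteq I$ with $\mathcal O$ containing the entries of all other monodromy data (this is what makes the bracket descend to the quotient in \eqref{fga}). Already for $k=2$ the degrees range over $\{-1,0,1\}$, so the left-hand total can be $0$ and nothing forces any factor on the right into $I$.

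What actually closes the ideal is the sharper, per-term statement that uses the \emph{position} of $M$ in the relations, not just the weight structure of \eqref{Rn}: for a boundary datum that is leftmost at its starting cusp and rightmost at its terminating cusp (so that it plays the role of $M_{41}$ in \eqref{32-41}, or of either factor in \eqref{H}), every term of the bracket of $m_{i,j}$ with any generator contains a factor $m_{p,j}$ with $p\ge i$ or $m_{i,q}$ with $q\ge j$ taken from $M$ itself; one checks this directly on \eqref{32-41-Pb} and \eqref{H-Pb}, where the step functions $\theta(s-i)$, $\theta(s-j)$, $\theta(l-j)-\theta(k-i)$ enforce exactly these one-sided index shifts. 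The direction of the shift depends on which tensor slot $M$ occupies and on whether $R$ or $R^{-\mathrm T}$ sits next to it --- this is where the geometric hypothesis of the lemma enters --- and your grading, being symmetric in the two slots, is blind to it. A smaller point: in your first paragraph, $L$ itself has a nonzero $(2,2)$ entry, so the anti-triangularity of $X_{\pi_2}LX_{\pi_1}$ is not inherited factorwise but results from a cancellation; in any case the lemma is a statement about the abstract $R$-matrix algebra and does not require the constraint to hold identically in the shear-coordinate parametrisation.
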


\proof The proof is based on the following observation: if $M$ is a matrix corresponding to a path that is leftmost at the starting cusp and
rightmost at the terminating cusp (examples are $M_2$ in the basic relation (\ref{basic1}) and $M_{41}$ in (\ref{32-41})), then (Poisson or quantum) commutation relations of elements $m_{i,j}$ of this matrix with elements of every other matrix or among themselves are such that every
term of the corresponding commutation relation necessarily contains an element $m_{k,j}$ or $m_{i,l}$ with $k\ge i$ and $l\ge j$, i.e., 
$k+j\ge i+j$ and $i+l\ge i+j$. Therefore imposing a constraint $m_{i,j}=0$ for $i+j>k+1$ is consistent: commutation relations of such elements with all other elements of algebra automatically vanish.
\endproof

\begin{remark}
From a purely algebraic standpoint, one may consider other Poisson reductions; the one for which $m_{i,j}=0$ for elements below the main antidiagonal is consistent with factoring out a gauge freedom associated with Borel subgroups at cusps.  
\end{remark}

The Poisson reduction in Lemma \ref{Poisson} is the quotient w.r.t. unipotent Borel subgroups $U_i\subset SL_k(\mathbb C)$ associated with the cusps, therefore it endows the decorated character variety  $\mathcal M_{g,s,n}^k$ with a Poisson structure.

\subsection{Powers of matrices}
Using the commutation relations (\ref{21-43})--(\ref{H1}) in the $R$-matrix form, we obtain the following generalizations of these relations to powers of matrices:
\begin{align}
&\sheet{1}{M_{21}^{\tcr{p}}} R_{12}\sheet{2} {M_{43}^{\tcb{m}}}R_{12}^{-1} =R_{12} \sheet{2}{M_{43}^{\tcb{m}}}R^{-1}_{12} \sheet{1}{M_{21}^{\tcr{p}}}, \quad {\tcr{p}},{\tcb{m}}\in \mathbb Z;
\label{21-43-nm}
\\
&\sheet{1}{M_{41}} R_{12}^{-\text{T}}\sheet{2} {M_{32}^{\tcr{p}}}R_{12}^{\text{T}} =R_{12} \sheet{2}{M_{32}^{\tcr{p}}}R^{-1}_{12} \sheet{1}{M_{41}},\quad {\tcr{p}}\in \mathbb Z
\label{32-41-nm}
\\
&\sheet{1}{M_{31}} R_{12}^{-\text{T}}\sheet{2} M_{42}R_{12}^{-1} =R_{12} \sheet{2}{M_{42}}R^{-1}_{12} \sheet{1}{M_{31}}\ \hbox{\tcr{no generalization}};
\label{31-42-nm}
\\
&R_{12}^{\text{T}}\sheet{1} {M_{ij}^{\tcr{p}}} R_{12}^{-\text{T}}\sheet{2}{M_{ij}}=\sheet{2}{M_{ij}} R_{12}^{-1} \sheet{1}{M_{ij}^{\tcr{p}}} R_{12},\quad {\tcr{p}}\in\mathbb Z;
\label{H-nm}
\\
&\sheet{1} {M_{ij}} R_{12}^{-\text{T}}\sheet{2}{M_{ij}^{\tcr{p}}}R_{12}^{\text{T}}= R_{12}\sheet{2}{M_{ij}^{\tcr{p}}} R_{12}^{-1} \sheet{1}{M_{ij}},\quad {\tcr{p}}\in \mathbb Z.
\label{H1-nm}
\end{align}
Note that for the {\em same} matrix $M_{ij}$ we can take powers of only one of the matrices $M_{ij}$ in relations (\ref{H}) and (\ref{H1}) but not powers of both matrices.

\subsection{Semiclassical limit}

By taking $q=\exp(-i\pi\hbar)$, we can expand the Kulish--Sklyanin matrix $R_{12}$ as:
$$
R_{12}(q)=(1+ \frac{i\pi \hbar}{2}+\mathcal O(\hbar^2))\sum_{i,j}\sheet{1}{e_{ii}}\otimes \sheet{2}{e_{jj}}
+(-i\pi \hbar+\mathcal O(\hbar^3)) \sum_i \sheet{1}{e_{ii}}\otimes \sheet{2}{e_{ii}}+(-2 i\pi \hbar+\mathcal O(\hbar^2))\sum_{j>i}
\sheet{1}{e_{ij}}\otimes \sheet{2}{e_{ji}}
$$
so that we obtain:
$$
R_{12}(q)=\sheet{1}{1}\otimes\sheet{2}{1}+ i\pi\hbar r + \mathcal O(\hbar^2),\qquad R_{12}(1/q)=\sheet{1}{1}\otimes\sheet{2}{1}- i\pi\hbar r + \mathcal O(\hbar^2),
$$
where:
\be\label{eq:semi-cl}
r= \frac{1}{2}\sum_{i,j}\sheet{1}{e_{ii}}\otimes \sheet{2}{e_{jj}}- \sum_i \sheet{1}{e_{ii}}\otimes \sheet{2}{e_{ii}}-2
\sum_{j>i} \sheet{1}{e_{ij}}\otimes \sheet{2}{e_{ji}}.
\eeq
Now, using the correspondence principle that $[A^\hbar,B^\hbar]\mapsto i\pi \hbar \{A,B\}$, we can take the semiclassical limits of  (\ref{21-43}):
$$
\sheet{1}{M_{21}}\sheet{2}{M_{43}} + i\pi\hbar \sheet{1}{M_{21}}r \sheet{2}{M_{43}} - i\pi\hbar\sheet{1}{M_{21}}\sheet{2}{M_{43}} r =
\sheet{2}{M_{43}} \sheet{1}{M_{21}}+
 i\pi\hbar r \sheet{2}{M_{43}} \sheet{1}{M_{21}}- i\pi\hbar \sheet{2}{M_{43} }r\sheet{1}{M_{21}},
$$
so that  (\ref{21-43}) becomes
\be\label{21-43sc}
\{\sheet{1}{M_{21}},\sheet{2}{M_{43}}\}=- \sheet{1}{M_{21}}r \sheet{2}{M_{43}} +\sheet{1}{M_{21}}\sheet{2}{M_{43}} r+
 r \sheet{2}{M_{43}} \sheet{1}{M_{21}}- \sheet{2}{M_{43} }r\sheet{1}{M_{21}},
\ee
In the same way (\ref{32-41})  becomes:
\be\label{32-41sc}
\{\sheet{1}{M_{41}},\sheet{2}{M_{32}}\}= \sheet{1}{M_{41}}r^T \sheet{2}{M_{32}} -\sheet{1}{M_{41}}\sheet{2}{M_{32}} r^T+
 r \sheet{2}{M_{32}} \sheet{1}{M_{41}}- \sheet{2}{M_{32} }r\sheet{1}{M_{41}},
\ee
while (\ref{31-42}) becomes:
\be\label{31-42sc}
\{\sheet{1}{M_{31}},\sheet{2}{M_{42}}\}= \sheet{1}{M_{31}}r^T \sheet{2}{M_{42}} +\sheet{1}{M_{31}}\sheet{2}{M_{42}} r+
 r \sheet{2}{M_{42}} \sheet{1}{M_{31}}- \sheet{2}{M_{42} }r\sheet{1}{M_{31}},
\ee
and (\ref{H}) becomes:
\be\label{Hsc}
\{\sheet{1}{M_{ij}},\sheet{2}{M_{ij}}\}= \sheet{1}{M_{ij}}r^T \sheet{2}{M_{ij}} -r^T \sheet{1}{M_{ij}}\sheet{2}{M_{ij}} +
  \sheet{2}{M_{ij}} \sheet{1}{M_{ij}} r- \sheet{2}{M_{ij} }r\sheet{1}{M_{ij}}.
\ee
We let $m^{\alpha,\beta}_{k,l}$ denote the $(k,l)$-element of the matrix $M_{\alpha,\beta}$. For matrix elements, we have the following Poisson relations (in the formulas below, double indices imply summations):
\begin{align}
&\{m^{2,1}_{i,j},m^{4,3}_{k,l}\}=m^{2,1}_{i,s}m^{4,3}_{s,l}\delta_{j,k}\theta(j{-}s)-m^{2,1}_{i,l}m^{4,3}_{k,j}\theta(j{-}l)+m^{2,1}_{s,j}m^{4,3}_{k,s}\delta_{i,l}\theta(s{-}i)-m^{2,1}_{k,j}m^{4,3}_{i,l}\theta(k{-}i),
\label{21-43-Pb}
\\
&\{m^{4,1}_{i,j},m^{3,2}_{k,l}\}=-m^{4,1}_{i,s}m^{3,2}_{s,l}\delta_{j,k}\theta(s{-}j)+m^{4,1}_{i,l}m^{3,2}_{k,j}\theta(l{-}j)+m^{4,1}_{s,j}m^{3,2}_{k,s}\delta_{i,l}\theta(s{-}i)-m^{4,1}_{k,j}m^{3,2}_{i,l}\theta(k{-}i),
\label{32-41-Pb}
\\
&\{m^{3,1}_{i,j},m^{4,2}_{k,l}\}=-m^{3,1}_{i,s}m^{4,2}_{s,l}\delta_{j,k}\theta(s{-}j)-m^{3,1}_{i,l}m^{4,2}_{k,j}\theta(j{-}l)+m^{3,1}_{s,j}m^{4,2}_{k,s}\delta_{i,l}\theta(s{-}i)-m^{3,1}_{k,j}m^{4,2}_{i,l}\theta(k{-}i) {+2 m^{3,1}_{i,s}m^{4,2}_{s,l}},
\label{31-42-Pb}
\\
&\{m^{\alpha,\beta}_{i,j},m^{\alpha,\beta}_{k,l}\}=-m^{\alpha,\beta}_{i,s}m^{\alpha,\beta}_{s,l}\delta_{j,k}\theta(s{-}j)+m^{\alpha,\beta}_{s,j}m^{\alpha,\beta}_{k,s}\delta_{i,l}\theta(s{-}i) +m^{\alpha,\beta}_{k,j}m^{\alpha,\beta}_{i,l}(\theta(l{-}j)-\theta(k{-}i)),
\label{H-Pb}
\end{align}
{where $\theta(k):={\rm sign}(k)+1$.}
\begin{remark}
Note that quantum commutation relations (\ref{H}) and (\ref{H1}) have the same semiclassical limit (\ref{H-Pb}).
\end{remark}

For a monodromy datum $M_2^1$ corresponding to an arc starting and terminating at different cusps, we have
\beq
\label{3.100}
\{m_{2_{i,j}}^1,m^1_{2_{k,l}}\}=m_{2_{i,j}}^1m^1_{2_{k,l}}(\theta(i-k)-\theta(j-l)).
\eeq

\subsection{Casimirs of the Poisson algebra of monodromy data}

We now address the problem of constructing Casimirs for the Poisson algebra of monodromy data.
For  technical convenience, we do not impose the restriction $\det M=1$, although relations (\ref{21-43-Pb})--(\ref{H-Pb}) imply that
determinants of all monodromy data corresponding to arcs starting and terminating at the same cusp are central.

\begin{theorem}
\label{thm:traces}
In any system of $SL_k(\mathbb C)$ monodromy data, for a monodromy datum  $M$ corresponding to an arc homeomorphic to circumnavigating a single hole without cusps, all elements $\tr [M^p]$, $p=1,\dots,k$, are Casimirs of the Poisson algebra of monodromy data.
\end{theorem}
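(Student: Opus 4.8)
The plan is to use the mapping-class-group (cluster) invariance of the monodromy data, Lemma~\ref{lem:1}, to put $\mathfrak a$ into a convenient normal form, and then to verify the two properties a Casimir must have: that $\tr[M^p]$ (Poisson- or $q$-)commutes with the entries of every other monodromy datum of the system, and that it commutes with the entries of $M$ itself. Concretely I would choose the spine $\Gamma_{g,s,n}$ so that the hole circumnavigated by $\mathfrak a$ is caught in a monogon attached, through inner edges, to the trivalent vertex next to a bordered cusp $\pi$, with $\mathfrak a$ the arc bounding that monogon. Flipping the inner edges separating the monogon from $\pi$ (Lemma~\ref{lem:1} guarantees $M$ is unchanged) brings $\mathfrak a$ to a form with two decisive features: it touches no cusp other than $\pi$, and at $\pi$ it is the \emph{outermost} arc --- leftmost on departure and rightmost on return, exactly the role of $M_{41}$ in \eqref{32-41} and of $M_2$ in \eqref{basic1}.

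The main argument is then the standard ``transfer matrix is central'' computation at the level of the $R$-matrix relations. Since $\mathfrak a$ is outermost at its only cusp, $M$ commutes with every monodromy datum not meeting $\pi$, while its cross-relation with any datum $N$ that meets $\pi$ is an instance of \eqref{basic1}, \eqref{basic1.1}, \eqref{32-41} (or a composite of these), all of the schematic shape $\sheet{1}{M}\,A\,\sheet{2}{N}\,B=C\,\sheet{2}{N}\,D\,\sheet{1}{M}$ with $A,B,C,D$ products of the numerical matrices $R_{12}^{\pm 1},R_{12}^{\pm\mathrm{T}}$, and the relation of $M$ with its own entries is \eqref{H}/\eqref{H1}. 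Raising $M$ to the power $p$ --- legitimate by \eqref{32-41-nm}, \eqref{H-nm}, \eqref{H1-nm} --- and moving the numerical $R$-factors around by means of the QYBE $R_{12}R_{13}R_{23}=R_{23}R_{13}R_{12}$, of $R_{\alpha\beta}^{\mathrm{T}}=R_{\beta\alpha}$ and of their derivatives (exactly as in the proof of Theorem~\ref{Jacobi}), one collects all $R$-factors on the outside and applies the partial trace $\tr_1$ over the first auxiliary space, using cyclicity of $\tr_1$ with respect to space-$1$ operators, to reach $\tr[M^p]\,\sheet{2}{N}=\sheet{2}{N}\,\tr[M^p]$ and $\tr[M^p]\,\sheet{1}{M}=\sheet{1}{M}\,\tr[M^p]$. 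As $\tr[M^p]$ commutes with the numerical $R$-matrices, these are equivalent to $[\tr M^p,m^{\alpha,\beta}_{i,j}]=0$ throughout the algebra; the semiclassical limits \eqref{21-43sc}--\eqref{Hsc} then give the Poisson statement.

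For $k=2$ there is a quicker explicit cross-check. In the normal form above the word \eqref{eqM} for $\mathfrak a$ contains only turn matrices $L,R$, a single loop matrix $F_{\omega_\beta}$, and the two boundary edge matrices $X_\pi$; since $X_A^2=-\mathrm{Id}$ and $L,R$ are independent of all $Z_\alpha,\pi_j$, cyclicity of the trace collapses the two $X_\pi$'s, giving $\tr[M^p]=(-1)^p\,\tr\!\big[(\sigma F_{\omega_\beta})^p\big]$ with $\sigma$ a product of turn matrices (an integer matrix, up to a $q$-power). Hence $\tr[M^p]$ is an integer polynomial in the single coefficient $\omega_\beta$ (for $p=1$ just $\pm\omega_\beta$, matching $\omega_\beta=e^{P/2}+e^{-P/2}$ for a hole of perimeter $P$), and since all $\omega_\beta$ are Casimirs so is $\tr[M^p]$. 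For general $SL_k$ the same reduction should identify $\tr[M^p]$ with the $p$-th power sum of the eigenvalues of the Fock--Goncharov hole matrix at that loop, which depend only on the Casimir parameters of the hole.

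The step I expect to be the real obstacle is the last one in the $R$-matrix argument: for a generic $R$-matrix only a quantum-twisted trace $\tr[\mathcal D\,M^p]$ is central, so one must show that for the Kulish--Sklyanin matrix \eqref{Rn} and for an \emph{outermost} arc the would-be twist $\mathcal D$ degenerates to the identity. I expect this to follow from the flag structure isolated in the proof of Lemma~\ref{Poisson}: for an outermost arc every monomial in every commutation relation of $m_{i,j}$ contains an entry $m_{k,l}$ with $k\ge i$ and $l\ge j$, which forces the $R$-conjugation of $\sheet{1}{M^p}$ into a shape with trivial $\tr_1$-contribution; equivalently, in the classical picture it is the (tedious but purely combinatorial) cancellation of the $\theta$-terms in \eqref{21-43-Pb}--\eqref{H-Pb}, already visible for $p=1$ by summing \eqref{H-Pb} over the diagonal. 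Should that cancellation after all require a nontrivial $\mathcal D$, one falls back on the explicit twist-free normal-form computation of the previous paragraph.
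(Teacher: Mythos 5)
Your proposal lands on the paper's actual argument, but only via what you present as the fallback. The paper does not attempt the quantum partial-trace/cyclicity computation at all: it identifies the circumnavigating arc with $M_{21}$ (its two ends adjacent at the cusp, all other arc-ends to one side, so that the exchange relation with any other datum $M_{3,[x]}$ takes the conjugation form \eqref{21-3x}, which generalizes to powers as \eqref{21-3x-nm}), passes immediately to the semiclassical limit, and verifies that the two $\theta$-terms in \eqref{21-3x-Pb} cancel upon setting $i=j$ and summing -- exactly the ``cancellation of the $\theta$-terms'' you describe in your last paragraph. Your instinct that the quantum $\tr_1$-with-cyclicity step is the weak point is correct and should be taken seriously: for the Kulish--Sklyanin matrix \eqref{Rn} the naively traced $\tr[M^p]$ is not obviously central in the quantum algebra (one expects a twisted trace), and the theorem as stated only claims centrality in the \emph{Poisson} algebra, which is why the paper works entirely at the level of \eqref{21-43-Pb}--\eqref{H-Pb}. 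So you should promote your fallback to the main proof.

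One concrete correction to your normal form: an arc bounding a monogon around a cuspless hole has its two ends \emph{adjacent} at the cusp (the sector between them leads into the monogon, which contains no other arcs), so it is of $M_{21}$- or $M_{32}$-type, not of $M_{41}$-type as you claim. This matters, because the power generalizations are asymmetric: \eqref{32-41-nm} only allows powers of the \emph{inner} matrix $M_{32}$, and \eqref{31-42-nm} admits no generalization at all, so placing the circumnavigating arc as the outermost thread pair would leave you without the relation $\sheet{1}{M^p}\,\sheet{2}{N}=\sheet{2}{N}R_{12}^{-1}\sheet{1}{M^p}R_{12}$ that the trace computation needs. With the adjacent-ends positioning, the rest of your argument (powers via \eqref{21-43-nm} and \eqref{21-3x-nm}, semiclassical limit, diagonal sum) coincides with the paper's proof; your explicit $k=2$ identification $\tr M=\pm\omega_\beta$ is a pleasant consistency check not present in the paper.
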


\proof The monodromy datum  $M$ corresponding to an arc homeomorphic to circumnavigating a single hole without cusps can be identified with the matrix $M_{21}$, while all other monodromy data can be identified with a matrix denoted by $M_{3,[x]}$ that corresponds to a path starting at the same cusp as $M_{21}$ (to the right of both ends of $M_{21}$) and terminating at a different cusp (denoted $[x]$).  In the quantum case, we have the relation
\beq
\label{21-3x}
\sheet{1}{M_{21}} \sheet{2} {M_{3,[x]}} = \sheet{2}{M_{3,[x]}}R^{-1}_{12} \sheet{1}{M_{21}} R_{12},
\eeq
which admits an immediate generalization to powers of $M_{12}$:
\beq
\label{21-3x-nm}
\sheet{1}{M_{21}^p} \sheet{2} {M_{3,[x]}} = \sheet{2}{M_{3,[x]}}R^{-1}_{12} \sheet{1}{M_{21}^p} R_{12},
\quad p\in \mathbb Z;
\eeq
Evaluating traces in space $1$ in the  semiclassical limits of relations
(\ref{21-43-nm}), (\ref{32-41-nm}), (\ref{H-nm}), and (\ref{21-3x-nm}), we find that the traces of the matrix $M_{21}^n$ (or $M_{32}^n$)  Poisson commute with
elements of all other matrices (and elements of the matrix $M_{21}$ itself), which completes the proof. We do this computation in detail only for relations
(\ref{21-3x-nm}), all other cases are completely analogous.

The semiclassical limit of relation (\ref{21-3x-nm}) reads:
\beq
\{m_{21_{i,j}}^p m_{{3,[x]}_{k,l}}\}=-\sum_{s=1}^n [m_{21_{s,j}}^p m_{{3,[x]}_{k,s}}\delta_{i,l}\theta(s-i)+m_{21_{i,l}}^p m_{{3,[x]}_{k,j}}\theta(j-l)].
\label{21-3x-Pb}
\eeq
Taking the sum over $i$ with $i=j$, we obtain
\begin{align*}
\sum_{i=1}^n \{m_{21_{i,i}}^p m_{{3,[x]}_{k,l}}\}  &=-\sum_{s=1}^n \sum_{i=1}^n  [m_{21_{s,i}}^p m_{{3,[x]}_{k,s}}\delta_{i,l}\theta(s-i)+m_{21_{i,l}}^p m_{{3,[x]}_{k,i}}\theta(i-l)]\\
   & =-\sum_{s=1}^n  m_{21_{s,l}}^p  m_{{3,[x]}_{k,s}}\theta(s-l)
+\sum_{i=1}^n  m_{21_{i,l}}^p m_{{3,[x]}_{k,i}}\theta(i-l)=0.
\end{align*}

Therefore, $\tr M^k$ are Casimirs for the algebra of elements of {\it any} matrix $M$ corresponding to a path that starts and terminates at the
same cusp.  \endproof

In Section \ref{s:examples}, we address the question of the dimension of the symplectic leaves in some examples.

\subsection{Reduction to the $SL_2$ decorated character variety}
Let us select the same cusped lamination as in \cite{ChM1}, then for every arc
$\mathfrak a$ in the lamination we associate a matrix $M_{\mathfrak a}\in SL_k$. This gives $6g-6+3s+2n$ matrices in $SL_k$.

It is easy to prove that the following character
$$
\begin{array}{ll}\tr_K:& SL_k(\mathbb C) \to \mathbb C\\
& M\mapsto \Tr(MK),\\
\end{array}\qquad \hbox{where}\quad K=\left(
\begin{array}{cccc}
0  &  \dots &0 & 0    \\
\dots  &  \dots &\dots& \dots    \\
0  &  \dots &0 & 0    \\
-1 &   0   &0 & 0 \\
\end{array}
\right),
$$
is well defined on $M_{g,s,n}^k$. Recall that $\tr_K(M)= - m_{1k}$.

Define the $\lambda$-length of the arc ${\mathfrak a}$ by $\tr_K(M_{\mathfrak a})$ and introduce the following operation:
$$
\sheet{12}{\tr}_K(\sheet{1}M_1\sheet{2}M_2):= \sheet{12}{\tr}(\sheet{1}M_1\sheet{1} K \sheet{2}M_2 \sheet{2} K)
$$
By taking $\sheet{12}{\tr}_K$ in all relations (\ref{21-43}), (\ref{32-41}), (\ref{31-42}) and (\ref{H}) we obtain the $\lambda$-lengths algebra on the $SL_2$ decorated character variety found in \cite{ChM1}.

\section{Examples of  algebras of monodromy data}
\label{s:examples}

\subsection{Case of only one monodromy datum }

{In the case when $2 g-2+s+n=1$ we only have one monodromy datum. We have two different situations, the first is when $M$ comes back to the same cusp, the second when it connects different cusps. In the former case the following lemma holds true:}

\begin{lemma}
\label{lm:M12-Casimirs}
For a general-position monodromy datum subject to algebra (\ref{H-Pb}), the maximum dimension of the Poisson leaves is $k(k-1)$. The only
Casimirs in this case are $\tr [M^p]$ with $p=1,\dots,k$.
\end{lemma}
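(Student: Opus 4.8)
The plan is to analyze the Poisson algebra \eqref{H-Pb} for a single matrix $M\in SL_k(\mathbb C)$ directly, computing the rank of the Poisson tensor at a generic point and identifying its kernel. Writing the bracket in $R$-matrix form \eqref{H}, the semiclassical relation \eqref{Hsc} says that the Poisson bivector on $\mathrm{Mat}_k$ (before imposing $\det M=1$) is the one associated with the classical $r$-matrix \eqref{eq:semi-cl}, of the familiar form $\{\sheet1M,\sheet2M\}=\sheet1M r^{\mathrm T}\sheet2M-r^{\mathrm T}\sheet1M\sheet2M+\sheet2M\sheet1M r-\sheet2M r\sheet1M$. First I would recall from Theorem~\ref{thm:traces} that $\tr[M^p]$, $p=1,\dots,k$, are Casimirs, and that $\det M$ is central as well (consistent with the first relation being among them up to the usual Newton-identity redundancy once $\det M=1$ is imposed); this produces $k-1$ independent Casimirs on the $\det M=1$ locus, so the corank is at least $k-1$ and hence the generic symplectic leaf has dimension at most $\dim SL_k - (k-1) = (k^2-1)-(k-1)=k^2-k=k(k-1)$.

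The substance is the reverse inequality: showing that at a general-position $M$ the rank is exactly $k(k-1)$, equivalently that the only local Casimirs are functions of $\tr[M^p]$. For this I would diagonalize $M$, write $M=g\,\Lambda\,g^{-1}$ with $\Lambda=\diag(\lambda_1,\dots,\lambda_k)$ distinct, and push the bracket forward to the coordinates $(g,\Lambda)$ on the open dense orbit-type stratum. The $r$-matrix bracket is known (this is essentially the Semenov-Tian-Shansky / Alekseev-Malkin picture for the "Heisenberg double"-type quadratic bracket) to become, in these coordinates, a sum of a piece living purely on the eigenvalue torus — which vanishes, reflecting that the $\lambda_i$ are Casimir-like — plus a nondegenerate piece on the flag-type directions coming from $g$. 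Concretely I would compute $\{m_{ij},f\}$ for $f$ a function of the entries and show that $\{\cdot,\cdot\}$ restricted to the leaf through $M$ is nondegenerate on the tangent space to the conjugacy class $\mathcal O_M=\{gMg^{-1}\}$, which has dimension $k^2-k$. Alternatively, and perhaps more cleanly, one checks directly that a function $F(M)$ Poisson-commuting with all $m_{ij}$ must satisfy $[\nabla F(M),M]=0$ as a consequence of \eqref{Hsc} (the standard argument: pairing \eqref{Hsc} with $\nabla F$ in one tensor slot forces the gradient to be a polynomial in $M$), whence $F$ is a function of the $\tr[M^p]$; this shows the space of Casimirs is exactly the $k$-dimensional algebra they generate, giving corank $k-1$ after $\det M=1$ and thus leaf dimension $k(k-1)$.

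I would organize the write-up as: (i) upper bound via Theorem~\ref{thm:traces}; (ii) the gradient computation: contract \eqref{Hsc} with $dF\otimes\mathrm{id}$, use $r+r^{\mathrm T}=\sum_{i,j}e_{ii}\otimes e_{jj}-2\sum_i e_{ii}\otimes e_{ii}-2\sum_{i\ne j}e_{ij}\otimes e_{ji}$ (the permutation operator up to a diagonal twist) to see that $\{F,M\}=0$ collapses to a commutator condition $[\,dF,\,M\,]=0$ modulo terms that drop out at generic $M$; (iii) conclude $dF\in\CC[M]$, hence $F$ is a function of $\tr[M],\dots,\tr[M^k]$, so these are the \emph{only} Casimirs and the rank is maximal; (iv) assemble the dimension count. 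The main obstacle I expect is step (ii): the $r$-matrix \eqref{eq:semi-cl} is not antisymmetric — it differs from a skew $r$-matrix by a symmetric invariant term — so the naive "contract and get a commutator" manipulation must be done carefully, tracking the symmetric part and the off-diagonal $\sum_{i\ne j}e_{ij}\otimes e_{ji}$ contribution, and one has to verify that the extra pieces either cancel among the four terms of \eqref{Hsc} or vanish on the generic (regular semisimple) locus. Handling that bookkeeping — essentially showing the "quadratic" bracket has the same Casimirs as the linear Lie–Poisson bracket on $\mathfrak{gl}_k^*$ — is the crux; everything else is the standard symplectic-leaf dimension arithmetic.
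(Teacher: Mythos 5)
Your overall skeleton is sound --- the upper bound via the trace Casimirs of Theorem~\ref{thm:traces} is exactly what the paper uses, and the dimension arithmetic is right --- but the load-bearing step, showing the rank actually attains $k(k-1)$ so that no further Casimirs exist, is left unexecuted in both of the routes you sketch, and both are considerably heavier than what is needed. The paper's proof is a one-line rank evaluation: take $M$ diagonal, $m_{i,j}=\delta_{i,j}\lambda_i$ with the $\lambda_i$ distinct and nonzero, and read off from (\ref{H-Pb}) that at this point the only nonvanishing brackets are $\{m_{i,j},m_{j,i}\}=\lambda_j(\lambda_j-\lambda_i)\neq 0$ for $i<j$ (all brackets involving diagonal entries vanish there). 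The bivector is thus a direct sum of $k(k-1)/2$ nondegenerate $2\times 2$ blocks, so its rank at that single point is already $k^2-k$; lower semicontinuity of the rank plus the $k$ independent trace Casimirs then pins the generic rank at exactly $k(k-1)$ and forces every Casimir to be functionally dependent on $\tr[M^p]$. No pushforward to $(g,\Lambda)$ coordinates and no gradient/commutator argument is required.

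The concrete gap in your write-up is step (ii): you correctly flag that the $r$-matrix (\ref{eq:semi-cl}) is not skew and that the ``contract with $dF$ and get $[dF,M]=0$'' manipulation for the four-term bracket (\ref{Hsc}) needs the symmetric and strictly-triangular parts of $r$ to cancel or vanish on the regular semisimple locus --- but you do not carry out that verification, and it is precisely the content of the lemma. The same remark applies to your first route: the claim that the bracket splits in $(g,\Lambda)$ coordinates into a vanishing torus piece plus a nondegenerate flag piece is the statement to be proved, not a known input for this particular (non-Sklyanin, four-term) bracket. If you want to salvage your plan with minimal effort, replace step (ii) entirely by the pointwise evaluation above: it is the specialization of your diagonalization idea to the single point $g=1$, where the whole computation collapses to the explicit $2\times2$ blocks.
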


\proof
The {proof} is based on the following observation. Let the classical $M$ has a diagonal form  $m_{i,j}=\delta_{i,j}\lambda_i$ with all $\lambda_i$ distinct and nonzero. The Poisson brackets are then nonzero only inside the pairs $(m_{i,j},m_{j,i})$ with $1\le i<j\le k$, for which we have
$$
\{m_{i,j},m_{j,i}\}=\lambda_{j}(\lambda_{j}-\lambda_{i}),
$$
so these brackets are non-degenerate. The minimum Poisson dimension of the corresponding leaf is thus $k^2-k$, and it is simultaneously the maximum possible Poisson dimension as the traces $\tr M^p$ by Theorem~\ref{thm:traces} are $k$ algebraically independent Casimirs of the algebra (\ref{H-Pb}). \endproof

We next address the problem of Casimirs for the case when the monodromy datum corresponds to an arc that connects different cusps -  algebra (\ref{3.100}):

\begin{lemma}
\label{lm:M1x-Casimirs}
\cite{FM, ChMaz}
The central elements of the algebra (\ref{3.100}) in the case of nonrestricted matrices $M$ are ratios $M_d^{\text{UL}}/M_{k-d}^{\text{LR}}$
of upper-left and lower-right minors of the respective dimensions $d\times d$ and $(k-d)\times (k-d)$ for $d=1,\dots,k$.
\end{lemma}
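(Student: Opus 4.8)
The plan is to use that (\ref{3.100}) is a \emph{quadratic} bracket of $r$-matrix type: it is the semiclassical limit of the open-arc relation (\ref{basic2}), which in tensor form reads
\[
\{\sheet{1}{M},\sheet{2}{M}\}=(M\otimes M)\,r-r^{\text{T}}\,(M\otimes M),
\]
with $r$ the classical $r$-matrix (\ref{eq:semi-cl}). For such a bracket the Poisson relation of any \emph{product} of entries with a single generator $m_{pq}$ is controlled, through the Leibniz rule, by the constant structure coefficients summed over the rows and columns occupied by the factors. First I would prove that each leading principal minor $D_d:=M_d^{\text{UL}}$ and each trailing principal minor $E_{k-d}:=M_{k-d}^{\text{LR}}$ is \emph{log-canonical} with every generator, i.e. there are constants $\alpha^{(d)}_{pq}$, $\beta^{(d)}_{pq}$ with
\[
\{D_d,m_{pq}\}=\alpha^{(d)}_{pq}\,D_d\,m_{pq},\qquad \{E_{k-d},m_{pq}\}=\beta^{(d)}_{pq}\,E_{k-d}\,m_{pq}.
\]

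The reason is structural and is the crux of the argument. Viewing $D_d$ as the matrix coefficient of $\wedge^{d}M$ on the extreme weight vector $e_1\wedge\dots\wedge e_d$, and $E_{k-d}$ as the coefficient of $\wedge^{k-d}M$ on the complementary extreme vector $e_{d+1}\wedge\dots\wedge e_k$, one observes that these vectors are eigenvectors of the Cartan part of $r$ and are triangularly annihilated by its strictly upper/lower part. Consequently the ``mixing'' contributions $m_{il}m_{kj}$ produced term-by-term by the bracket recombine, by the antisymmetry of the determinant, into a multiple of the minor itself, which is exactly what makes the brackets above multiplicative. Equivalently, one can run the Leibniz rule directly on $D_d=\sum_{\sigma\in S_d}\operatorname{sgn}(\sigma)\prod_{a=1}^{d}m_{a,\sigma(a)}$ and verify that the total coefficient is independent of $\sigma$; the same computation yields $\alpha^{(d)}_{pq}$ and $\beta^{(d)}_{pq}$ explicitly as sums of the structure coefficients of (\ref{3.100}) over the blocks $\{1,\dots,d\}$ and $\{d+1,\dots,k\}$.

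The conclusion then reduces to the weight-matching identity $\alpha^{(d)}_{pq}=\beta^{(d)}_{pq}$ for all $p,q$: the weight that the upper-left block $\{1,\dots,d\}$ attaches to $m_{pq}$ in $D_d$ must coincide with the one the complementary lower-right block attaches in $E_{k-d}$, the two corners carrying opposite weights that cancel in the ratio. Granting it, the logarithmic Leibniz rule gives
\[
\{D_d/E_{k-d},\,m_{pq}\}=\big(\alpha^{(d)}_{pq}-\beta^{(d)}_{pq}\big)\,(D_d/E_{k-d})\,m_{pq}=0
\]
for every $p,q$ and every $d=1,\dots,k$, where $E_0\equiv1$ so that $d=k$ recovers the central $\det M$. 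It then remains to check that the $k$ ratios are functionally independent and that their number equals the generic corank of the Poisson tensor of (\ref{3.100})---a count entirely parallel to that of Lemma~\ref{lm:M12-Casimirs}---whence they exhaust the centre.

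The hard part will be the two linked steps of the middle paragraph: establishing log-canonicity of the minors (the recombination of the bilinear mixing terms into the minor) and then the identity $\alpha^{(d)}_{pq}=\beta^{(d)}_{pq}$. The cleanest route packages both into the $\wedge^{d}M$ picture, where the extreme-weight property trivialises the triangular part of $r$; the brute-force alternative through the Leibniz expansion is elementary but needs a careful case analysis according to whether $p$ and $q$ lie in $\{1,\dots,d\}$ or not. Once log-canonicity is in hand the completeness count is routine.
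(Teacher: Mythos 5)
A preliminary remark on the comparison: the paper gives \emph{no} proof of this lemma at all --- it is stated with a citation to \cite{FM, ChMaz} and the text moves straight on to the restricted analogues --- so your proposal can only be measured against the standard argument of those references, which your three-step plan (log-canonicity of the two families of principal minors, matching of the scaling exponents, then a rank count showing the $k$ ratios exhaust the centre) essentially reconstructs; the paper's own later proofs (Lemma~\ref{lm:M1x-Casimirs-res}, Fig.~\ref{fig:M3x}) indeed take exactly these homogeneity facts as given. One feature of your write-up deserves emphasis because it is a matter of correctness, not convenience: you work from the tensor form of (\ref{basic2}), $\{\sheet{1}{M},\sheet{2}{M}\}=(M\otimes M)\,r-r^{\text{T}}(M\otimes M)$, rather than from (\ref{3.100}) as printed. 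Read literally, (\ref{3.100}) is purely log-canonical (the monomial $m_{i,j}m_{p,q}$ appears in every case), and for \emph{that} bracket the lemma is false: already for $k=3$ one finds $\{m_{1,1},m_{1,3}\}=m_{1,1}m_{1,3}$ while $\{M_2^{\text{LR}},m_{1,3}\}=3\,M_2^{\text{LR}}m_{1,3}$, so $M_1^{\text{UL}}/M_2^{\text{LR}}$ would not be central. The bracket that actually follows from (\ref{basic2}) (and from the explicit parametrisation $M=X_{\pi_2}LX_{Z_1}RX_{\pi_1}$) carries the mixed monomial, $\{m_{i,j},m_{p,q}\}=\bigl(\theta(i-p)-\theta(j-q)\bigr)\,m_{i,q}m_{p,j}$, and this is precisely what your tensor formula encodes; so your starting point is the right one.

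The genuine gap is in the mechanism you advertise as the ``cleanest route''. For the upper-left minor the extreme-weight argument works exactly as you say: in $(M\otimes M)r$ the nilpotent part of $r$ applies raising operators $e_{ij}$ ($i<j$) to the column vector $e_1\wedge\dots\wedge e_d$, in $r^{\text{T}}(M\otimes M)$ it applies lowering operators to the row covector, both kill the highest-weight vector, and only the Cartan part survives, giving $\{M_d^{\text{UL}},m_{p,q}\}=\varepsilon^{(d)}_{p,q}M_d^{\text{UL}}m_{p,q}$ with $\varepsilon^{(d)}_{p,q}=+1$ for $p\le d<q$, $-1$ for $q\le d<p$, and $0$ otherwise --- no recombination needed. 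But this fails for the lower-right minor: since the \emph{same} $r$ (not its skew counterpart, as in a Sklyanin bracket $[r,\cdot\,]$) occurs in both terms, on the side of $e_{d+1}\wedge\dots\wedge e_k$ one is always applying raising operators, which do \emph{not} annihilate the lowest-weight vector. The nilpotent part then produces genuinely non-principal minors (rows $\{d+1,\dots,k\}$, columns $\{i\}\cup\{d+1,\dots,k\}\setminus\{j\}$ with $i\le d<j$), and log-canonicity of $M_{k-d}^{\text{LR}}$ holds only because these terms cancel pairwise in the alternating sum --- i.e.\ the ``recombination'' you relegate to a parenthetical is the actual content of the computation for half of the objects in the lemma. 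A clean repair, which also delivers your weight-matching identity $\alpha^{(d)}_{p,q}=\beta^{(d)}_{p,q}$ with no extra work, is to note that the involution $\sigma(m_{i,j})=m_{k+1-i,k+1-j}$ is \emph{anti}-Poisson for this bracket (the coefficient $\theta(i-p)-\theta(j-q)$ changes sign while the mixed monomial goes to the $\sigma$-image of a mixed monomial) and sends $M_{k-d}^{\text{UL}}$ to $M_{k-d}^{\text{LR}}$; applying $\sigma$ to the upper-left statement yields $\{M_{k-d}^{\text{LR}},m_{p,q}\}=\varepsilon^{(d)}_{p,q}M_{k-d}^{\text{LR}}m_{p,q}$, hence centrality of all the ratios. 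Your final step --- functional independence of the $k$ ratios and generic corank $k$, checked on diagonal matrices exactly as in Lemma~\ref{lm:M12-Casimirs} --- is routine and correct as sketched.
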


We now derive analogues of Lemmas \ref{lm:M12-Casimirs} and \ref{lm:M1x-Casimirs} in the presence of  the constraints (\ref{starstar}).

\begin{lemma}
\label{lm:M12-Casimirs-UT}
For a monodromy datum $M$ subject to the algebra (\ref{H-Pb}) with restriction (\ref{starstar}) imposed, the maximum dimension of Poisson leaves
is $k(k-1)/2-[k/2]$ and the $k$ Casimirs are $\tr [M^p]$, $p=1,\dots,k$, as in the nonrestricted case, plus $[k/2]$ Casimirs defined by
\beq
\label{Ci}
C_i:=m_{i,k+1-i}/m_{k+1-i,i},\quad i=1,\dots,[k/2].
\eeq
\end{lemma}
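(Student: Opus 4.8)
The plan is to proceed exactly as in the proof of Lemma \ref{lm:M12-Casimirs}, namely to pick a convenient representative of a general-position point of the constrained space, write down the Poisson brackets explicitly from (\ref{H-Pb}) restricted to the anti-triangular locus (\ref{starstar}), determine the rank of the resulting bivector, and then produce the matching number of independent Casimirs. First I would parametrize a general-position monodromy datum $M$ with $m_{i,j}=0$ for $i+j\ge k+2$: the surviving entries are those on and above the main anti-diagonal, i.e.\ the $m_{i,j}$ with $i+j\le k+1$. The ``interior'' entries with $i+j\le k$ are free, while the anti-diagonal entries $m_{i,k+1-i}$ are constrained only by $\det M=1$; in fact $\det M=(-1)^{\lfloor k/2\rfloor}\prod_{i=1}^{k} m_{i,k+1-i}$ up to sign, so exactly one anti-diagonal entry is determined by the others. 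Counting: the number of pairs $(i,j)$ with $i+j\le k+1$ is $\binom{k+1}{2}=k(k+1)/2$, and subtracting one for $\det M=1$ gives $k(k+1)/2-1$ as the dimension of the constrained space before any symplectic reduction.

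Next I would compute the Poisson brackets. Since Lemma \ref{Poisson} guarantees the reduction (\ref{starstar}) is Poisson, the bracket (\ref{H-Pb}) closes on the constrained entries, so it suffices to restrict attention to $i+j\le k+1$, $k+l\le k+1$. As in Lemma \ref{lm:M12-Casimirs} I would evaluate the bracket at a ``generic anti-diagonal'' representative — the analogue of the diagonal matrix there — namely $m_{i,j}=\delta_{i+j,k+1}\lambda_i$ with the $\lambda_i$ generic subject to $\prod\lambda_i=\pm1$. At such a point all products $m_{i,s}m_{s,l}$ etc.\ in (\ref{H-Pb}) that survive involve two anti-diagonal factors, so the bracket $\{m_{i,j},m_{k,l}\}$ is supported on pairs of interior entries symmetric about the anti-diagonal, together with the anti-diagonal entries themselves. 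A short bookkeeping with the step functions $\theta$ should show that the nonzero brackets pair up the entries $(m_{i,j},m_{k+1-j,k+1-i})$ with $i+j<k+1$, $i<k+1-j$ — these are the genuinely off-anti-diagonal interior entries, coming in pairs reflected through the anti-diagonal — and that the coefficient is a nonvanishing monomial in the $\lambda$'s, exactly as the computation $\{m_{i,j},m_{j,i}\}=\lambda_j(\lambda_j-\lambda_i)$ did in the diagonal case. The anti-diagonal entries $m_{i,k+1-i}$ will either Poisson-commute with everything or pair only among themselves in a degenerate way, which is precisely what produces the extra $[k/2]$ Casimirs (\ref{Ci}).

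From here the rank count is mechanical. The number of interior entries with $i+j\le k$ is $\binom{k}{2}=k(k-1)/2$; these split into reflection-pairs through the anti-diagonal, with the entries fixed by the reflection (those with $j=k+1-i$ among $i+j\le k$ — impossible, so actually the self-paired interior entries are those lying on the anti-diagonal, already excluded) — more carefully, the fixed points of the reflection $(i,j)\mapsto(k+1-j,k+1-i)$ on the strictly-interior region are absent when $k$ is even and amount to the central entry $m_{(k+1)/2,(k+1)/2}$ when $k$ is odd, which is why a $[k/2]$ appears. The symplectic (Poisson) rank at the generic point is therefore $k(k-1)/2-[k/2]$ — an even number, as it must be — giving the stated maximal leaf dimension, and the corank is $k+[k/2]$. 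The $k$ Casimirs $\tr[M^p]$, $p=1,\dots,k$, are Casimirs by Theorem~\ref{thm:traces} and remain independent after restriction; the $[k/2]$ ratios $C_i=m_{i,k+1-i}/m_{k+1-i,i}$ are checked to Poisson-commute with all generators directly from (\ref{H-Pb}) with (\ref{starstar}), using that every term in the bracket of $m_{i,k+1-i}$ with another generator forces a factor $m_{a,b}$ with $a+b\ge k+2$, hence zero, modulo the symmetric contribution from $m_{k+1-i,i}$; algebraic independence of the full set of $k+[k/2]$ functions at the generic point then matches the corank and finishes the proof. The main obstacle I anticipate is not any single hard estimate but the careful combinatorial bookkeeping of which $\theta$-terms in (\ref{H-Pb}) survive the constraint $i+j\le k+1$ and the verification that the pairing of reflected interior entries is genuinely non-degenerate (coefficients nonzero) at a generic anti-diagonal representative — this is where an off-by-one in the index ranges would corrupt the final count $k(k-1)/2-[k/2]$.
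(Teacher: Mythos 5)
Your overall strategy coincides with the paper's: exhibit the $k+[k/2]$ Casimirs and then match the corank of the Poisson bivector at a conveniently chosen representative. However, the central combinatorial step is wrong as written. You claim that at the anti-diagonal representative $m_{i,j}=\delta_{i+j,k+1}\lambda_i$ the nonvanishing brackets pair each strictly interior entry $m_{i,j}$ (with $i+j<k+1$) with its reflection $m_{k+1-j,k+1-i}$ through the anti-diagonal. But $(k+1-j)+(k+1-i)=2(k+1)-(i+j)>k+1$, so that reflected position lies strictly below the anti-diagonal and is set to zero by (\ref{starstar}); it is not a coordinate of the reduced algebra at all, so these ``pairs'' cannot carry any symplectic rank. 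The pairing that actually survives comes from the two $\delta$-terms of (\ref{H-Pb}) and is the \emph{transpose} pairing: at your representative $\{m_{i,j},m_{j,i}\}=2(\lambda_j\lambda_{k+1-j}-\lambda_i\lambda_{k+1-i})$ for $i\ne j$, $i+j\le k$, which is generically nonzero, while the third term of (\ref{H-Pb}) --- the only source of your reflection pairing --- connects a surviving entry only to a vanishing one. Consequently the $-[k/2]$ in the rank is produced by the $[k/2]$ strictly interior diagonal entries $m_{i,i}$, $i\le[k/2]$ (the fixed points of transposition), which Poisson-commute with \emph{everything} at this point, and not by fixed points of the anti-diagonal reflection, of which there are none in the interior region; your parenthetical ``impossible, so actually\dots'' notices the inconsistency but resolves it the wrong way. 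The final count $k(k-1)/2-[k/2]$ is correct, but the argument producing it would not survive being written out; with the transpose pairing substituted in, your representative does give the right rank, the anti-diagonal entries accounting for the remaining $k$ degenerate directions.

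Two further points of difference with the paper. The paper evaluates the bivector at a representative in which both the anti-diagonal and the upper half of the main diagonal are nonzero, and organizes the entries into quadruples, doubles, triples and singles, of which only the triples and singles (with $k+[k/2]$ elements in total) are degenerate; your representative is a degeneration of that one and happens to give the same rank, but this needs to be verified since a special point a priori only yields a lower bound on the maximal rank. Also, your justification that the $C_i$ are central is imprecise: the brackets of $m_{i,k+1-i}$ with the other generators do not vanish; rather they are homogeneous, $\{m_{i,k+1-i},m_{c,d}\}=m_{i,k+1-i}m_{c,d}\bigl[-\delta_{c,k+1-i}+\delta_{d,k+1-i}+\delta_{d,i}-\delta_{c,i}\bigr]$, and the bracket of the ratio $C_i$ vanishes because this coefficient is antisymmetric under $i\leftrightarrow k+1-i$. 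This cancellation is the mechanism the paper uses, and it is what you need to make that step rigorous.
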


\begin{proof}
Traces are Casimirs for the general matrix $M$ and they remain Casimirs for any Poisson reduction. Anti-diagonal elements of $M$
have homogeneous Poisson relations with all other elements:
$$
\{m_{i,k+1-i},m_{k,l}\}=m_{i,k+1-i}m_{k,l}\bigl[-\delta_{k+1-i,k}+\delta_{k+1-i,l}+\delta_{i,l}-\delta_{i,k}\bigr],
$$
so the ratios (\ref{Ci}) have zero Poisson brackets with all $m_{k,l}$.
It remains to prove that the highest Poisson dimension matches the number of already found Casimirs. In order to prove it, take the reduced matrix $M$ in the form in which the diagonal elements $m_{i,i}$ with $1\le i\le [k/2]$ and all anti-diagonal elements $m_{i,k+1-i}$, $i=1,\dots,k$, are nonzero and are not algebraically related. Then it is a direct calculation to check that the commutation relations
are closed inside quadruples $\bigl(m_{i,j},m_{k+1-i,j},m_{j,i},m_{k+1-j,i}\bigr)$ with $1\le j<i\le [k/2]$, doubles $(m_{(k+1)/2,j}, m_{j,(k+1)/2})$ with $1\le j<(k+1)/2$, triples $\bigl(m_{i,i},m_{k+1-i,i},m_{i,k+1-i}\bigr)$ and singles $m_{(k+1)/2,(k+1)/2}$. (Of course,
doubles and singles occur only for odd $n$.) It is then a straightforward calculation to show that quadruples and doubles have full Poisson
dimension whereas triples and singles have zero Poisson dimensions, so all their elements correspond to Casimirs. But the total number of
elements in triples and singles (if any) is exactly $k+[k/2]$, i.e., the number of Casimirs listed above.
\end{proof}

\begin{lemma}
\label{lm:M1x-Casimirs-res}
\cite{FM, ChMaz}
The central elements of the algebra (\ref{3.100}) in the case of matrices $M$ with restrictions (\ref{starstar}) are
\beq
\label{Cd}
\hat C_d=\frac{M_d^{\text{UL}}\prod_{i=1}^d \bigl[m_{i,k+1-i} m_{k+1-i,i}\bigr]}{M_{k-d}^{\text{UL}}},\quad d=0,\dots,\Bigl[\frac{k-1}{2}\Bigr].
\eeq
(See Fig.~\ref{fig:M3x}.)
In this formula, both minors are upper-left, of sizes $d\times d$ and $(k-d)\times(k-d)$. We have $[(k+1)/2]$ such Casimirs and the maximum Poisson dimension is $k(k+1)/2-[(k+1)/2]$.
\end{lemma}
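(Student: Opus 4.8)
The claim has two largely independent parts, which I would treat in turn: (i) every $\hat C_d$ Poisson-commutes with all the entries $m_{i,j}$ of $M$, and (ii) on the locus defined by (\ref{starstar}) the Poisson bivector has generic rank $k(k+1)/2-[(k+1)/2]$. The reduced matrix has $k(k+1)/2$ nonzero entries, so once (i) is established and the $[(k+1)/2]$ functions $\hat C_d$, $d=0,\dots,[(k-1)/2]$, are seen to be functionally independent, (ii) forces them to exhaust the Poisson centre and the symplectic leaves to have the asserted dimension. I would do (i) by a ``charge'' bookkeeping on the homogeneous bracket (\ref{3.100}), and (ii) by evaluating the Poisson matrix at a convenient representative, following the proof of Lemma~\ref{lm:M12-Casimirs-UT}.

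For (i), the point is that (\ref{3.100}) is homogeneous, so the bracket of any two monomials in the $m_{i,j}$ is that monomial product times a scalar. Hence every upper-left minor $M_r^{\mathrm{UL}}$ --- even one whose block overlaps the vanishing region of (\ref{starstar}) --- has a log-canonical bracket with each $m_{p,q}$: writing $M_r^{\mathrm{UL}}=\sum_\sigma\operatorname{sgn}(\sigma)\prod_{i\le r}m_{i,\sigma(i)}$, the $\sigma$-term has charge $\sum_{i\le r}\theta(i-p)-\sum_{i\le r}\theta(\sigma(i)-q)$ against $m_{p,q}$, which --- since $\sigma$ runs over a permutation of $\{1,\dots,r\}$ --- equals $F_r(p)-F_r(q)$ with $F_r(x):=\sum_{i=1}^r\theta(i-x)$, independently of $\sigma$; the monomial $\Pi_d:=\prod_{i=1}^d m_{i,k+1-i}m_{k+1-i,i}$ likewise carries a telescoping charge. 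Assembling these, $\{\log\hat C_d,\log m_{p,q}\}$ collapses to $h_d(p)-h_d(q)$ for a single function $h_d$ built from $F_d$, $F_{k-d}$ and the anti-diagonal charges, and centrality reduces to the elementary check that $h_d$ is constant on $\{1,\dots,k\}$, organised by the ranges $x\le d$, $d<x\le k-d$, $x=k+1-d$, $k+1-d<x\le k$, with the even and odd cases for $k$ handled separately. The same computation explains why $\hat C_d$, and not the Casimirs $M_d^{\mathrm{UL}}/M_{k-d}^{\mathrm{LR}}$ of Lemma~\ref{lm:M1x-Casimirs}, are the correct objects: on the locus (\ref{starstar}) the lower-right $(k-d)\times(k-d)$ block of $M$ is strictly anti-triangular, so $M_{k-d}^{\mathrm{LR}}\equiv 0$ for $d\ge1$; $\hat C_d$ is the regularisation of that ratio, equivalently $\lim_{t\to0}t^{\nu_d}M_d^{\mathrm{UL}}/M_{k-d}^{\mathrm{LR}}(M_t)$ along a one-parameter deformation $M_t$ that scales the entries with $i+j\ge k+2$ to zero, a form in which centrality is inherited by continuity from Lemma~\ref{lm:M1x-Casimirs}.

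For (ii): part (i) already bounds the generic rank above by $k(k+1)/2-[(k+1)/2]$, so it suffices to exhibit one point of the locus (\ref{starstar}) at which this value is attained. Mimicking the proof of Lemma~\ref{lm:M12-Casimirs-UT}, take $M$ with the $[k/2]$ upper-diagonal entries $m_{i,i}$, all $k$ anti-diagonal entries $m_{i,k+1-i}$, and the entries strictly between the diagonal and the anti-diagonal generic and algebraically unrelated, the remaining entries zero. Evaluating (\ref{3.100}) at this point, the Poisson matrix block-decomposes into ``quadruples'' $\bigl(m_{i,j},m_{k+1-i,j},m_{j,i},m_{k+1-j,i}\bigr)$ and ``doubles'', which are non-degenerate, and ``triples'' $\bigl(m_{i,i},m_{k+1-i,i},m_{i,k+1-i}\bigr)$ together with a ``single'' on the central anti-diagonal entry, which have vanishing Poisson dimension; the degenerate blocks carry exactly $[(k+1)/2]$ coordinates, so the corank there is $[(k+1)/2]$, which matches and hence pins down the size of the centre.

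The hard part is the combinatorial core of (i): verifying that the charges contributed by $M_d^{\mathrm{UL}}$, $M_{k-d}^{\mathrm{UL}}$ and $\Pi_d$ cancel for every admissible $(p,q)$ --- i.e.\ that $h_d$ is genuinely constant --- and, if one takes the degeneration route instead, identifying the leading power $\nu_d$ and the leading coefficient of $M_{k-d}^{\mathrm{LR}}(M_t)$; both are routine but demand care at the extreme indices and in the parity split. In (ii) the only subtle step is guessing the correct block decomposition at the chosen point, for which a useful sanity check is that the anti-diagonal ratios $C_i$ from Lemma~\ref{lm:M12-Casimirs-UT} reappear inside the $\hat C_d$.
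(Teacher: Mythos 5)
Your part (i) is essentially the paper's own argument: the authors also observe that every upper-left minor and every anti-diagonal entry is log-canonical with all $m_{p,q}$, and verify centrality of (\ref{Cd}) by checking that the region-wise charges of $M_d^{\text{UL}}/M_{k-d}^{\text{UL}}$ and of $\prod_{i=1}^d\bigl[m_{i,k+1-i}m_{k+1-i,i}\bigr]$ are complementary (this is exactly what Fig.~\ref{fig:M3x} records), so your explicit $F_r(x)=\sum_{i=1}^r\theta(i-x)$ bookkeeping is the same proof written out in formulas.

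Part (ii), however, contains a genuine error. You assert that at the chosen point the triples $\bigl(m_{i,i},m_{k+1-i,i},m_{i,k+1-i}\bigr)$ ``have vanishing Poisson dimension'' and that ``the degenerate blocks carry exactly $[(k+1)/2]$ coordinates.'' Neither is true for the algebra (\ref{3.100}). A direct computation of the charges inside a triple gives, for $i\le[k/2]$, $\{m_{i,i},m_{k+1-i,i}\}=-m_{i,i}m_{k+1-i,i}$, $\{m_{i,i},m_{i,k+1-i}\}=+m_{i,i}m_{i,k+1-i}$ and $\{m_{k+1-i,i},m_{i,k+1-i}\}=2\,m_{k+1-i,i}m_{i,k+1-i}$, so the $3\times3$ block is a nonzero antisymmetric matrix of rank $2$: each triple contributes corank $1$, not corank $3$. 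This is precisely the point the paper emphasizes (``in contrast to the proof of Lemma~\ref{lm:M12-Casimirs-UT}, the Poisson dimension of triples will be two, not zero''); you have imported the triple/single behaviour from the closed-arc algebra (\ref{H-Pb}) into the open-arc algebra (\ref{3.100}), where it fails. Moreover the degenerate blocks carry $3[k/2]$ coordinates plus one for odd $k$, not $[(k+1)/2]$; with rank-zero triples the corank at your point would be $3[k/2]+1_{k\ \mathrm{odd}}>[(k+1)/2]$, the rank there would fall strictly below $k(k+1)/2-[(k+1)/2]$, and the lower bound needed to match the upper bound coming from the Casimirs would not be established — the argument does not close. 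The correct count is: quadruples and doubles nondegenerate, each of the $[k/2]$ triples contributes corank $1$, the single (odd $k$) contributes corank $1$, total corank $[(k+1)/2]$. A smaller point: you also turn on the entries strictly between the diagonal and the anti-diagonal; doing so destroys the claimed block decomposition of the Poisson matrix (any two simultaneously nonvanishing entries with nonzero mutual charge couple), so you should evaluate, as the paper does, at the point where \emph{only} the anti-diagonal and the upper half of the diagonal are nonzero.
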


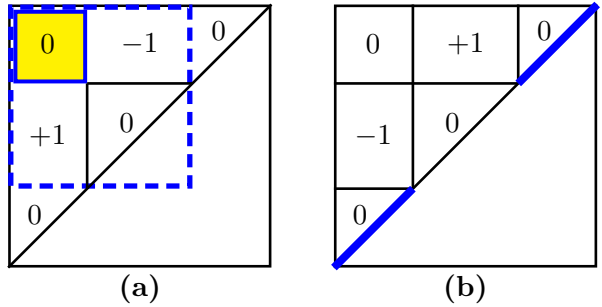
\begin{figure}[tb]
{\psset{unit=0.7}
\begin{pspicture}(-3,-3)(3,3)
\psframe[linecolor=black, linewidth=1pt, fillstyle=solid, fillcolor=white](-2.5,2.5)(2.5,-2.5)
\psframe[linecolor=blue, linestyle=dashed, linewidth=2pt, fillstyle=solid, fillcolor=white](-2.5,2.5)(1,-1)
\psframe[linecolor=blue, linewidth=1.5pt, fillstyle=solid, fillcolor=yellow](-2.4,2.4)(-1,1)
\pcline[linewidth=1pt](-2.5,-2.5)(2.5,2.5)
\pcline[linewidth=1pt](-1,1)(-1,-1)
\pcline[linewidth=1pt](-1,1)(1,1)
\put(-1.75,1.75){\makebox(0,0)[cc]{\hbox{{$0$}}}}
\put(-1.75,0){\makebox(0,0)[cc]{\hbox{{$+1$}}}}
\put(-2,-1.5){\makebox(0,0)[cc]{\hbox{{$0$}}}}
\put(-0.25,0.25){\makebox(0,0)[cc]{\hbox{{$0$}}}}
\put(1.5,2){\makebox(0,0)[cc]{\hbox{{$0$}}}}
\put(0,1.75){\makebox(0,0)[cc]{\hbox{{$-1$}}}}
\put(0,-2.9){\makebox(0,0)[cc]{\hbox{{\bf (a)}}}}
%
\end{pspicture}
\begin{pspicture}(-3,-3)(3,3)
\psframe[linecolor=black, linewidth=1pt, fillstyle=solid, fillcolor=white](-2.5,2.5)(2.5,-2.5)
\pcline[linewidth=1pt](-2.5,-2.5)(2.5,2.5)
\pcline[linewidth=1pt](-1,2.5)(-1,-1)
\pcline[linewidth=1pt](-2.5,1)(1,1)
\pcline[linewidth=1pt](1,2.5)(1,1)
\pcline[linewidth=1pt](-2.5,-1)(-1,-1)
\pcline[linewidth=3pt, linecolor=blue](-2.5,-2.5)(-1,-1)
\pcline[linewidth=3pt, linecolor=blue](2.5,2.5)(1,1)
\put(-1.75,1.75){\makebox(0,0)[cc]{\hbox{{$0$}}}}
\put(-1.75,0){\makebox(0,0)[cc]{\hbox{{$-1$}}}}
\put(-2,-1.5){\makebox(0,0)[cc]{\hbox{{$0$}}}}
\put(-0.25,0.25){\makebox(0,0)[cc]{\hbox{{$0$}}}}
\put(1.5,2){\makebox(0,0)[cc]{\hbox{{$0$}}}}
\put(0,1.75){\makebox(0,0)[cc]{\hbox{{$+1$}}}}
\put(0,-2.9){\makebox(0,0)[cc]{\hbox{{\bf (b)}}}}
%
\end{pspicture}
}
\caption{Constructing Casimirs for the restricted matrix $M$ subject to Poisson algebra (\ref{3.100}). Numbers in
the corresponding rectangles or triangles indicate the sign of homogeneous commutation relations between elements in
the corresponding region and (a) the ratios of minors $M_d^{\text{UL}}/M_{k-d}^{\text{UL}}$ ($d<k-d$) and (b) the products
$\prod_{i=1}^d [m_{i,k+1-i}m_{k+1-i,i}]$ of elements on the antidiagonal.
We see that these signs are complementary in all regions.}
\label{fig:M3x}
\end{figure}

\proof As illustrated in Fig.~\ref{fig:M3x}, all matrix elements have homogeneous commutation relations with any minor
$M_d^{\text{UL}}$, for the ratio $M_d^{\text{UL}}/M_{k-d}^{\text{UL}}$ the coefficients are $+1,0,-1$ depending on the region which
this matrix element belongs to. They are depicted in Fig.~\ref{fig:M3x}(a). Next, all matrix elements have homogeneous commutation relations with any element $m_{i,k+1-i}$ on the main antidiagonal. For the product $\prod_{i=1}^d \bigl[m_{i,k+1-i} m_{k+1-i,i}\bigr]$ of these elements, the corresponding coefficients are depicted in Fig.~\ref{fig:M3x}(b). We see that the two patterns are complementary, so the product in the
right-hand side of (\ref{Cd}) commutes with all matrix elements.

If we again assume that only anti-diagonal elements and upper half of diagonal elements of $M$ are nonzero, then, as in the proof of Lemma~\ref{lm:M12-Casimirs-UT}, we can split the whole set of elements into quadruples, doubles, triples, and singles; as in the above
proof, all quadruples and doubles will then have the full Poisson dimension, but in contrast to the proof of Lemma~\ref{lm:M12-Casimirs-UT},
the Poisson dimension of triples will be two, not zero (and it obviously remains zero for singles). Thus we are losing exactly $2[k/2]$ central
elements as compared to the previous case, and the maximum Poisson dimension in this case is $k(k+1)/2-[(k+1)/2]$. \endproof

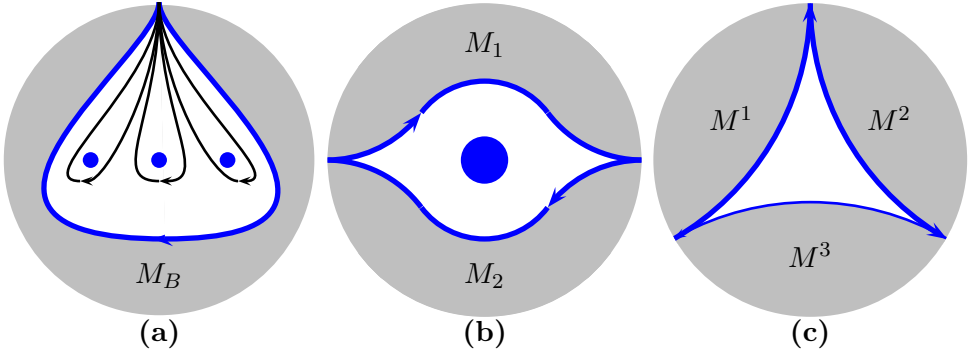
\begin{figure}[tb]
{\psset{unit=0.7}
\begin{pspicture}(-3,-3)(3,3)
\pscircle[linecolor=white, linewidth=1pt,fillstyle=solid, fillcolor=lightgray](0,0){3}
\psbezier[linecolor=blue, linewidth=2pt, fillstyle=solid,fillcolor=white](0,3)(0,2)(-5,-1.5)(0.12,-1.5)
\psbezier[linecolor=blue, linewidth=2pt, fillstyle=solid,fillcolor=white]{->}(0,3)(0,2)(5,-1.5)(-0.12,-1.5)
\pscircle[linecolor=white, linewidth=1pt,fillstyle=solid, fillcolor=blue](0,0){0.2}
\pscircle[linecolor=white, linewidth=1pt,fillstyle=solid, fillcolor=blue](-1.3,0){0.2}
\pscircle[linecolor=white, linewidth=1pt,fillstyle=solid, fillcolor=blue](1.3,0){0.2}
\psbezier[linewidth=1pt](0,3)(0,1.5)(1,-0.4)(1.5,-0.4)
\psbezier[linewidth=1pt]{->}(0,3)(0,1.5)(2.5,-0.4)(1.5,-0.4)
\psbezier[linewidth=1pt](0,3)(0,1.5)(-2.5,-0.4)(-1.5,-0.4)
\psbezier[linewidth=1pt]{->}(0,3)(0,1.5)(-1,-0.4)(-1.5,-0.4)
\psbezier[linewidth=1pt](0,3)(0,.8)(-1,-0.4)(0,-0.4)
\psbezier[linewidth=1pt]{->}(0,3)(0,.8)(1,-0.4)(0,-0.4)
\put(0,-2.2){\makebox(0,0)[cc]{\hbox{{$M_B$}}}}
\put(0,-3.3){\makebox(0,0)[cc]{\hbox{{\bf (a)}}}}
\end{pspicture}
\begin{pspicture}(-3,-3)(3,3)
\psclip{\pscircle[linecolor=white](0,0){3}}
\pswedge[linewidth=1pt,linecolor=lightgray,fillcolor=lightgray, fillstyle=solid](-3,2.25){2.25}{-90}{0}
\pswedge[linewidth=1pt,linecolor=lightgray,fillcolor=lightgray, fillstyle=solid](-3,-2.25){2.25}{0}{90}
\pswedge[linewidth=1pt,linecolor=lightgray,fillcolor=lightgray, fillstyle=solid](3,2.25){2.25}{180}{270}
\pswedge[linewidth=1pt,linecolor=lightgray,fillcolor=lightgray, fillstyle=solid](3,-2.25){2.25}{90}{180}
\pswedge[linewidth=1pt,linecolor=lightgray,fillcolor=lightgray, fillstyle=solid](0,0){3}{37}{143}
\pswedge[linewidth=1pt,linecolor=lightgray,fillcolor=lightgray, fillstyle=solid](0,0){3}{217}{323}
\pscircle[linecolor=white, linewidth=1pt,fillstyle=solid, fillcolor=white](0,0){1.5}
\pscircle[linecolor=white, linewidth=1pt,fillstyle=solid, fillcolor=blue](0,0){.5}
\psarc[linewidth=2pt,linecolor=blue](0,0){1.5}{37}{143}
\psarc[linewidth=2pt,linecolor=blue](0,0){1.5}{217}{323}
\psarc[linewidth=2pt,linecolor=blue](-3,-2.25){2.25}{37}{90}
\psarc[linewidth=2pt,linecolor=blue](3,2.25){2.25}{217}{270}
\psarc[linewidth=2pt,linecolor=blue]{->}(3,-2.25){2.25}{90}{143}
\psarc[linewidth=2pt,linecolor=blue]{->}(-3,2.25){2.25}{270}{323}
\endpsclip
\put(0,2.2){\makebox(0,0)[cc]{\hbox{{$M_1$}}}}
\put(0,-2.2){\makebox(0,0)[cc]{\hbox{{$M_2$}}}}
\put(0,-3.3){\makebox(0,0)[cc]{\hbox{{\bf (b)}}}}
\end{pspicture}
\begin{pspicture}(-3,-3)(3,3)
\psclip{\pscircle[linecolor=white](0,0){3}}
\pswedge[linewidth=1pt,linecolor=lightgray,fillcolor=lightgray, fillstyle=solid](-5.2,3){5.2}{300}{360}
\pswedge[linewidth=1pt,linecolor=lightgray,fillcolor=lightgray, fillstyle=solid](5.2,3){5.2}{180}{240}
\pswedge[linewidth=1pt,linecolor=lightgray,fillcolor=lightgray, fillstyle=solid](0,-6){5.2}{60}{120}
\psarc[linewidth=2pt,linecolor=blue]{->}(-5.2,3){5.2}{300}{360}
\psarc[linewidth=2pt,linecolor=blue]{->}(5.2,3){5.2}{180}{240}
\psarc[linewidth=1pt,linecolor=blue]{->}(0,-6){5.2}{60}{120}
\endpsclip
\put(-1.5,.8){\makebox(0,0)[cc]{\hbox{{$M^1$}}}}
\put(1.5,.8){\makebox(0,0)[cc]{\hbox{{$M^2$}}}}
\put(0,-1.8){\makebox(0,0)[cc]{\hbox{{$M^3$}}}}
\put(0,-3.3){\makebox(0,0)[cc]{\hbox{{\bf (c)}}}}
\end{pspicture}
}
\caption{Three cases of monodromy data: $\Sigma_{0,s+1,1}$ (a), $\Sigma_{0,2,2}$ (b), and $\Sigma_{0,1,3}$ (c).}
\label{fig:sigma}
\end{figure}

\subsection{Monodromy algebras for $\Sigma_{0,s+1,1}$}
We first address the case of monodromy data for a disc bounded by a hole with a single bordered cusp and with $s$ holes in the interior (Fig.~\ref{fig:sigma}(a)). The basis of monodromy data is constituted by $s$ $(k\times k)$-matrices $M_{2j,2j-1}$,\ $j=1,\dots, s$ where we order linearly all $2s$ ends of cycles corresponding to these monodromy data; the matrix $M_{2j,2j-1}$ corresponds to the path that starts and terminates at the bordered cusp and circumnavigates the $j$th hole inside the disc going clockwise. The quantum and Poisson algebras of elements of these matrices are described by relations
(\ref{21-43}), (\ref{H}), (\ref{H1}) and (\ref{21-43-Pb}), (\ref{H-Pb}).

We introduce also the boundary monodromy datum
\beq
\label{Mb}
M_B:=M_{21}M_{43}\cdots M_{2s-2,2s-3}M_{2s,2s-1}.
\eeq
We begin with the lemma describing a nonrestricted case.

\begin{lemma}
The maximum Poisson dimension of leaves of the $SL_k(\mathbb C)$-algebra of monodromy data for $\Sigma_{0,s+1,1}$ is $sk(k-1)$. The $sk$ central elements are $\tr [M_{2j,2j-1}]^p$, $j=1,\dots,s$, $p=1,\dots,k$.
\end{lemma}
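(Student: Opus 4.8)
The plan is to mimic the proof of Lemma~\ref{lm:M12-Casimirs}: first produce enough Casimirs to bound the leaf dimension from above, and then exhibit one point of the representation variety where the Poisson tensor attains the complementary rank. Throughout I would work with unrestricted matrices, so the algebra has $sk^{2}$ generators $m^{(j)}_{a,b}$, the $(a,b)$-entry of $M_{2j,2j-1}$, governed by relations (\ref{21-43}), (\ref{H}), (\ref{H1}) and, semiclassically, by (\ref{21-43-Pb}), (\ref{H-Pb}), exactly as recorded just above the statement.

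For the upper bound I would invoke Theorem~\ref{thm:traces}: each $M_{2j,2j-1}$ circumnavigates a cusp-free hole, so $\tr[M_{2j,2j-1}^{p}]$, $p=1,\dots,k$, are Casimirs; over the $s$ holes this produces $sk$ Casimirs, functionally independent for generic monodromy data. Hence the Poisson bivector has corank at least $sk$ generically, i.e.\ rank at most $sk^{2}-sk=sk(k-1)$, so the maximal symplectic leaf has dimension at most $sk(k-1)$. I would also remark that $M_{B}$ yields no new central element: it is a closed arc based at the cusp rather than around a cusp-free hole, so Theorem~\ref{thm:traces} does not apply to it, and only $\det M_{B}=\prod_{j}\det M_{2j,2j-1}$ is forced to be central, which already lies in the subalgebra generated by the $sk$ traces.

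For the matching lower bound I would evaluate the Poisson tensor at the point where every $M_{2j,2j-1}$ is diagonal with eigenvalues $\lambda^{(j)}_{1},\dots,\lambda^{(j)}_{k}$ chosen distinct and nonzero within each $j$. The intra-matrix brackets there are exactly those computed in the proof of Lemma~\ref{lm:M12-Casimirs}: they vanish except $\{m^{(j)}_{a,b},m^{(j)}_{b,a}\}=\lambda^{(j)}_{b}(\lambda^{(j)}_{b}-\lambda^{(j)}_{a})$ for $a<b$. For the cross terms I would substitute the diagonal ansatz into (\ref{21-43-Pb}) (the configuration relevant to $s$ pairwise non-nested boundary loops) and read off that $\{m^{(j)}_{a,b},m^{(j')}_{c,d}\}$ survives only for $(c,d)=(b,a)$; more precisely, for $a<b$ one gets $\{m^{(j)}_{a,b},m^{(j')}_{b,a}\}=2(\lambda^{(j)}_{a}-\lambda^{(j)}_{b})(\lambda^{(j')}_{a}-\lambda^{(j')}_{b})$ when $j<j'$ and $0$ when $j>j'$, while every bracket involving a diagonal entry $m^{(j)}_{a,a}$ vanishes. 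The triangular pattern in $(j,j')$ simply records the linear order of the $s$ holes along the boundary.

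Assembling this, the bracket matrix at that point is block diagonal: the $sk$ diagonal entries span the kernel, and for each pair $a<b$ the $2s$ entries $m^{(1)}_{a,b},\dots,m^{(s)}_{a,b},m^{(1)}_{b,a},\dots,m^{(s)}_{b,a}$ form a $2s\times 2s$ block which is block anti-diagonal with off-diagonal blocks $P_{ab}$ and $-P_{ab}^{\mathrm T}$, where the $s\times s$ matrix $P_{ab}$ is upper triangular with nonzero diagonal $\lambda^{(j)}_{b}(\lambda^{(j)}_{b}-\lambda^{(j)}_{a})$; thus $P_{ab}$ is invertible and the block has full rank $2s$. Summing over the $\binom{k}{2}$ pairs yields rank $2s\binom{k}{2}=sk(k-1)$, so the maximal leaf has dimension at least $sk(k-1)$, hence exactly $sk(k-1)$; and since the corank there is exactly $sk$, the $sk$ traces exhaust the Casimirs up to functional dependence. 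The only genuinely computational step — the one I expect to be the main obstacle — is the cross-term evaluation showing that (\ref{21-43-Pb}) degenerates at the diagonal point to an invertible (indeed triangular) $P_{ab}$; once that is in place, the rank count and the standard lower-semicontinuity argument (attaining the maximal rank at one point forces it on a dense open set) finish the proof.
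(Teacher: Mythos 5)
Your proof is correct and follows essentially the same route as the paper's: the $sk$ Casimirs come from Theorem~\ref{thm:traces}, and the rank $sk(k-1)$ is obtained by evaluating the Poisson bivector at the point where all $M_{2j,2j-1}$ are diagonal with distinct nonzero eigenvalues, yielding a block-diagonal bivector with $\binom{k}{2}$ invertible $2s\times 2s$ blocks. Your explicit identification of each block as anti-diagonal with triangular invertible off-diagonal part $P_{ab}$ just spells out why the paper's "blocks of non-zero determinant" claim holds.
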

\begin{proof}
That the traces of powers of $M_{2j,2j-1}$ are central was proved in Theorem~\ref{thm:traces}.
Here we prove that the general Poisson dimension of a Poisson leaf is $sk(k-1)$. For this, we again evaluate the rank of the Poisson bi-vector at a specific point in the phase space. It is convenient to take a point at which all monodromy data are diagonal and all their diagonal elements are distinct and nonzero, $m_{{2j,2j-1}_{p,l}}= \delta_{p,l}\lambda^{(j)}_p$, with all $\lambda^{(j)}_p\ne 0$ and such that $\lambda^{(j_1)}_{p_1}=\lambda^{(j_2)}_{p_2}$ only for $p_1=p_2$ and $j_1=j_2$. It is easy to check that nonzero entries in the Poisson bi-vector correspond to the Poisson brackets between $m_{{2j,2j-1}_{p,l}}$
and $m_{{2j,2j-1}_{l,p}}$ with $p\ne l$ and these brackets, evaluated at the chosen point, are
\begin{align}
&\{m_{{2j,2j-1}_{p,l}},m_{{2j,2j-1}_{l,p}}\}=(\lambda^{(j)}_p-\lambda^{(j)}_l)[\lambda^{(j)}_p\theta(p-l)+\lambda^{(j)}_l \theta(l-p)];
\\
&\{m_{{2j,2j-1}_{p,l}},m_{{2j,2j-1}_{l,p}}\}=(\lambda^{(j)}_p-\lambda^{(j)}_l)(\lambda^{(i)}_p-\lambda^{(i)}_l)\theta(l-p),\quad j<i.
\end{align}
So, the Poisson bi-vector at this point has mostly vanishing entries except $2s$ non-vanishing relations for every fixed pair $(p,l)$, with $p\neq l$. We can organise rows and columns by ordering them by the pairs $(p,l)$, starting from $(1,2)$ and ending with $(k-1,k)$. In this way the Poisson bi-vector becomes block diagonal with $2s\times 2 s$ blocks of non-zero determinant  provided all $\lambda^{(j)}_p$ are nonzero and distinct.
There are $k(k-1)/2$ such blocks, which proves that the ranks is  $sk(k-1)$.
\end{proof}

We now consider the actual situation with the Lagrangian restriction (\ref{starstar}) imposed on the monodromy datum  $M_B$ (\ref{Mb}).

\begin{theorem}\label{thm:S0s1}
The maximum Poisson dimension of leaves of the $SL_k(\mathbb C)$-algebra of monodromy data for $\Sigma_{0,s+1,1}$ with the restriction
(\ref{starstar}) imposed on the only boundary monodromy datum  $M_B$ (\ref{Mb}) is $sk(k-1)-k(k-1)/2-[k/2]$. Besides the
standard $sk$ central elements that are $\tr M_{2j,2j-1}^p$, $j=1,\dots,s$, $p=1,\dots,k$ we have $[k/2]$ central elements
having the form (\ref{Ci}) for the matrix $M_B$, i.e., $C_i=m_{B_{i,k+1-i}}/m_{B_{k+1-i,i}}$.
\end{theorem}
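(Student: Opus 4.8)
The plan is to exhibit $sk+[k/2]$ algebraically independent Casimirs and then to show, by evaluating the Poisson bi-vector at a single well-chosen point, that this is exactly the corank of the bracket on the reduced space; the two bounds together pin down the maximal Poisson dimension.

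\textbf{The Casimirs.} The $sk$ traces $\tr M_{2j,2j-1}^p$ are central on the whole un-restricted algebra by Theorem~\ref{thm:traces}, and, being Casimirs, stay central after the Poisson reduction of Lemma~\ref{Poisson}. For the remaining $[k/2]$ elements the argument mirrors the proof of Lemma~\ref{lm:M12-Casimirs-UT}: once (\ref{starstar}) is imposed, the observation in the proof of Lemma~\ref{Poisson} shows that every anti-diagonal entry $m_{B_{a,b}}$ with $a+b=k+1$ of $M_B$ is \emph{extremal}, meaning that in any term of the bracket of $m_{B_{a,b}}$ with any element of any monodromy datum the only factor coming from $M_B$ that does not vanish on the constraint surface is $m_{B_{a,b}}$ itself (every $m_{B_{a',b}}$ with $a'>a$ and every $m_{B_{a,b'}}$ with $b'>b$ lies strictly below the anti-diagonal). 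Hence each $m_{B_{a,b}}$ has purely homogeneous (monomial) Poisson relations with the entire algebra, and a short check shows the homogeneity weight is invariant under the reflection $(a,b)\mapsto(b,a)$, so $m_{B_{i,k+1-i}}$ and its mirror $m_{B_{k+1-i,i}}$ carry equal weights. It follows that the ratios $C_i=m_{B_{i,k+1-i}}/m_{B_{k+1-i,i}}$, $i=1,\dots,[k/2]$, Poisson-commute with everything.

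\textbf{The rank computation.} By the same observation in the proof of Lemma~\ref{Poisson}, the functions $m_{B_{i,j}}$ with $i+j\ge k+2$ become central on their common zero locus, so the reduced space is the Poisson submanifold cut out by those $k(k-1)/2$ generically independent equations inside the $sk^2$-dimensional space of all entries of $M_{21},\dots,M_{2s,2s-1}$, and so has dimension $sk^2-k(k-1)/2$. It then suffices to produce one point at which the bi-vector has rank $sk^2-k(k-1)/2-(sk+[k/2])=sk(k-1)-k(k-1)/2-[k/2]$. I would take $M_{2j,2j-1}=\Lambda_j$ diagonal with distinct non-zero eigenvalues for $j=1,\dots,s-1$, and $M_{2s,2s-1}=\Lambda^{-1}A$ with $\Lambda=\Lambda_1\cdots\Lambda_{s-1}$ and $A$ a generic matrix satisfying $A_{i,j}=0$ for $i+j\ge k+2$, so that $M_B=A$ obeys (\ref{starstar}). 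At this point the ``interior'' data $M_{2j,2j-1}$, $j=1,\dots,s-1$, contribute among themselves exactly the full-rank blocks of the non-restricted Lemma above, of total rank $(s-1)k(k-1)$, while the block built from the entries of $M_B=A$ with their relations (\ref{H-Pb}) together with the constraint reproduces verbatim the quadruple/double/triple/single splitting in the proof of Lemma~\ref{lm:M12-Casimirs-UT}, of rank $k(k-1)/2-[k/2]$. Adding, $(s-1)k(k-1)+k(k-1)/2-[k/2]=sk(k-1)-k(k-1)/2-[k/2]$; since at this point the $sk+[k/2]$ Casimirs above are manifestly algebraically independent, this is simultaneously the attained rank and its maximum.

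\textbf{Main obstacle.} The delicate step is the genuine \emph{decoupling} of the interior blocks from the boundary block at the model point: one has to check that the cross-brackets between the entries of the non-diagonal matrix $M_{2s,2s-1}=\Lambda^{-1}A$ and the diagonal $\Lambda_j$, governed by the composite relations (\ref{21-43-Pb}), contribute no rank beyond the non-restricted count, and that on the constraint surface the $M_B$-block degenerates in precisely the pattern of the proof of Lemma~\ref{lm:M12-Casimirs-UT} (triples and singles of zero Poisson dimension, quadruples and doubles of full dimension). The parity bookkeeping --- $k$ even versus odd, and the collapse to Lemma~\ref{lm:M12-Casimirs-UT} when $s=1$ --- should be verified along the way.
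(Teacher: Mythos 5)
Your proposal follows the paper's own route: the same $[k/2]$ ratios $C_i$ established as Casimirs via the homogeneity of the Poisson relations of the anti-diagonal entries of $M_B$ (the paper writes the explicit bracket $\{m_{B_{i,k+1-i}},m_{p,l}\}=m_{B_{i,k+1-i}}m_{p,l}[-\delta_{k+1-i,p}+\delta_{k+1-i,l}+\delta_{i,l}-\delta_{i,p}]$, whose coefficient is invariant under $i\mapsto k+1-i$, which is exactly your ``reflection-invariant weight''), the same basis change trading $M_{2s,2s-1}$ for $M_B$, and the same evaluation of the bi-vector at a point with diagonal interior data and an anti-triangular $M_B$. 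Your per-matrix bookkeeping (interior block of rank $(s-1)k(k-1)$ plus the $M_B$-block of Lemma~\ref{lm:M12-Casimirs-UT}, with the decoupling of cross-brackets deferred) is the paper's per-orbit count in different clothing --- the paper groups the same entries into symmetry orbits $S[\cdot]$ spanning all $s$ matrices and likewise declares the decisive non-degeneracy/degeneracy check ``a cumbersome calculation, omitted'' --- so your argument matches the published proof in both substance and level of detail.
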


\begin{proof}
We first prove that $C_i=[m_B]_{i,k+1-i}/[m_B]_{k+1-i,i}$ are central. Every element $[m_B]_{i,k+1-i}$ has homogeneous Poisson relations
with every matrix element of every matrix $M_{2j,2j-1}$. In Poisson relations we can identify $M_B$ with $M_{41}$ and every $M_{2j,2j-1}$---with $M_{32}$. Actual Poisson brackets coincide with those inside the same matrix $M_{41}$,
$$
\{m_{{41}_{i,k+1-i}},m_{{32}_{p,l}}\}=m_{{41}_{i,k+1-i}}m_{{32}_{p,l}}\bigl[-\delta_{k+1-i,p}+\delta_{k+1-i,l}+\delta_{i,l}-\delta_{i,p}\bigr],
$$
so the ratio (\ref{Ci}) remains central in this case as well.

To address the problem of actual Poisson dimension, it is technically more convenient to remove the last monodromy datum $M_{2s,2s-1}$ from the basis and add $M_B$ to it. Then all remaining matrices are independent and we can take the restriction on $M_B$ into account explicitly. We again evaluate the Poisson bi-vector at the point in which all $M_{2j,2j-1}$ with $1\le j<s$ are diagonal and $M_B$ has nonzero anti-diagonal and the upper half-diagonal. We let $S[m_{i,j}]$ denote the orbit of the matrix element $m_{i,j}$ under the action of the symmetry group generated by $S_1[m_{i,j}]=m_{k+1-i,j}$ and $S_2[m_{i,j}]=m_{j,i}$. For a generic $(i,j)$ such an orbit comprises eight elements for $M_{2j,2j-1}$ and four elements for $M_B$ because of the reduction. It is a cumbersome calculation, omitted, to demonstrate that the Poisson algebra is non-degenerate for the sets $\cup_{j=1}^{s-1}S[m_{{2j,2j-1}_{k,l}}]\cup S[m_{B_{k,l}}]$ with $1\le l<k\le [(k+1)/2]$ and is highly degenerate for the sets
$\cup_{j=1}^{s-1}S[m_{{2j,2j-1}_{i,i}}]\cup S[m_{B_{i,i}}]$ with $i=1,\dots,[k/2]$: the Poisson dimension of every such set comprising $4s-1$ elements is $2(s-1)$. Finally, the set $\cup_{j=1}^{s-1}S[m_{{2j,2j-1}_{(k+1)/2,(k+1)/2}}]\cup S[[m_{B_{(k+1)/2,(k+1)/2}}]$ has zero Poisson
dimension. So, the total Poisson codimension for even $k$ is $[k/2](2s+1)=sk+k/2$ and for odd $k$ it is $[k/2](2s+1)+s=sk+(k-1)/2$, that is,
we have $sk+[k/2]$ independent Casimirs, as expected.
\end{proof}

\subsubsection{Braid-group action in $\Sigma_{0,s+1,1}$}
Let us denote $M_{2r,2r-1}$ by $M_{(r)}$ for brevity. The braid group for $\pi_1(\Sigma_{0,s+1,1})$ is generated by the standard operators $B_j$:
\beq
B_j:\ \left\{ M_{(j-1)}\to M_{(j-1)} M_{(j)}M_{(j-1)}^{-1};\ M_{(j)}\to M_{(j-1)};\ M_{(k)}\to M_{(k)},\ k\ne j,j-1\right\}.
\label{BG}
\eeq

\begin{lemma}
The braid-group action (\ref{BG}) preserves the quantum commutation relations for monodromy data. In the semiclassical limit, it also obviously preserves the set of central elements $(\tr M_{(r)}^k)$.
\end{lemma}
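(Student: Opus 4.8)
The plan is to recognise the braid operator $B_j$ of (\ref{BG}) as the automorphism of the arc groupoid $\pi_{\mathfrak a}(\Sigma_{0,s+1,1})$ induced by a surface homeomorphism, and then to use the fact that the algebra of monodromy data is intrinsic to the topological surface. Since $\Sigma_{0,s+1,1}$ has a single bordered cusp, every arc is based there, so $\pi_{\mathfrak a}(\Sigma_{0,s+1,1})$ is just $\pi_1(\Sigma_{0,s+1,1})=\langle M_{(1)},\dots,M_{(s)}\rangle$, and the formula (\ref{BG}) is precisely the Artin action of the generator $\sigma_{j-1}$. This action is realised by the half-twist $\phi_j$ supported in an embedded disc enclosing only the holes $j-1$ and $j$; being supported in the interior, $\phi_j$ is the identity near the boundary, hence it fixes the bordered cusp together with its decorating horocycle, so it is a mapping-class element of the decorated surface. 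It carries the distinguished system of $s$ clockwise hole-loops $\{M_{(r)}\}$ to another system of the \emph{same} combinatorial type (the holes $j-1,j$ and the corresponding pairs of threads at the cusp are interchanged), acting on coordinates by (\ref{BG}). Because, by \cite{FR,AGS} and Theorem~\ref{Jacobi}, the quantum algebra of monodromy data is the graph-connection (Fock--Rosly) algebra attached to the surface and is independent of the chosen arc system, $B_j$ is an algebra automorphism and sends the relations (\ref{21-43}), (\ref{H}), (\ref{H1}) for $\{M_{(r)}\}$ to the corresponding relations for $\{B_jM_{(r)}\}$, with index pairs assigned by their new positions at the cusp.

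For a self-contained check one only has to treat the relations involving the two modified generators, $B_jM_{(j-1)}=M_{(j-1)}M_{(j)}M_{(j-1)}^{-1}$ and $B_jM_{(j)}=M_{(j-1)}$. The key observation is that a relation of the shape $\sheet1N R_{12}\sheet2M R_{12}^{-1}=R_{12}\sheet2M R_{12}^{-1}\sheet1N$, which is exactly the form of (\ref{21-43}), says that $\sheet1N$ commutes with the $R_{12}$-conjugate of $\sheet2M$; since such conjugates form a group, this relation is automatically multiplicative and inversion-stable both in the $M$-slot and in the $N$-slot. Hence, for $\ell\neq j-1,j$, the relation of $M_{(\ell)}$ with $B_jM_{(j-1)}$ (of type (\ref{21-43}), with $M_{(\ell)}$ in whichever slot its index dictates) follows by multiplying the relations of $M_{(\ell)}$ with $M_{(j-1)}^{\pm1}$ and with $M_{(j)}$, while its relation with $B_jM_{(j)}=M_{(j-1)}$ is just the old relation of $M_{(\ell)}$ with $M_{(j-1)}$, only relabelled.

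The remaining, and only laborious, case is the self-relation (\ref{H})/(\ref{H1}) for $B_jM_{(j-1)}$ together with its mutual relation with $B_jM_{(j)}=M_{(j-1)}$: here the self-relation of $M_{(j-1)}$ genuinely gets entangled with the cross-relation between $M_{(j-1)}$ and $M_{(j)}$. Using $R_{12}^{\text{T}}=R_{21}$ and the QYBE $R_{12}R_{13}R_{23}=R_{23}R_{13}R_{12}$, this should reduce, through the same string of elementary moves displayed in the proof of Theorem~\ref{Jacobi}, to those two input relations. This reduction---the QYBE bookkeeping for the conjugated matrix $M_{(j-1)}M_{(j)}M_{(j-1)}^{-1}$---is the step I expect to be the only technical obstacle; it can in fact be bypassed, since $M_{(j-1)}M_{(j)}M_{(j-1)}^{-1}$ is literally the monodromy datum of a closed arc that avoids the other arcs (the braided hole-loop), so (\ref{H})/(\ref{H1}) and its (\ref{21-43})-type relation with $M_{(j-1)}$ hold for it by the very groupoid derivation that produced those relations.

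Finally, for the Casimirs: in the semiclassical limit the trace is cyclic, so $\tr(B_jM_{(j-1)})^p=\tr(M_{(j-1)}M_{(j)}M_{(j-1)}^{-1})^p=\tr M_{(j)}^p$, $\tr(B_jM_{(j)})^p=\tr M_{(j-1)}^p$, and $\tr(B_jM_{(\ell)})^p=\tr M_{(\ell)}^p$ for the remaining $\ell$. Thus $B_j$ merely permutes the set $\{\tr M_{(r)}^p:\ r=1,\dots,s,\ p=1,\dots,k\}$, which is therefore preserved; equivalently, $\phi_j$ permutes the free homotopy classes of the $s$ hole-loops, and these trace functions depend only on those classes.
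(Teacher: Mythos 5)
Your handling of the easy cases is exactly the paper's: for $\ell\neq j-1,j$ the relation (\ref{21-43}) states that $\sheet1{M_{(\ell)}}$ commutes with the $R$-conjugate of the other matrix, a property stable under products and inverses, so it propagates to the whole multiplicative group generated by $M_{(j-1)}$ and $M_{(j)}$; the Casimir statement via cyclicity of the trace is also fine. The difficulty is that the two relations you isolate as ``the only technical obstacle'' --- the cross-relation $\sheet1{M'}_{(j-1)}R_{12}\sheet2{M'}_{(j)}R_{12}^{-1}=R_{12}\sheet2{M'}_{(j)}R_{12}^{-1}\sheet1{M'}_{(j-1)}$ for $M'_{(j-1)}=M_{(j-1)}M_{(j)}M_{(j-1)}^{-1}$, $M'_{(j)}=M_{(j-1)}$, and the self-relation (\ref{H}) for $M'_{(j-1)}$ --- are precisely the content of the paper's proof, which consists of two explicit chains of $R$-matrix manipulations. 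You neither carry these out nor supply a substitute of equal rigor: the claim that they ``should reduce'' to the input relations is exactly what must be shown, and it is not a formal consequence of multiplicativity, because the self-relation for a conjugate genuinely entangles (\ref{H}) for each factor with the cross-relation (\ref{basic1})/(\ref{21-43}) between them, and the bookkeeping of transposes and inverses of $R_{12}$ is where the work lies.

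The topological bypass is an attractive alternative route, but as stated it carries its own gap. To invoke ``the very groupoid derivation that produced those relations'' for the braided loop, you must know (i) that the quantum-ordered product $M_{(j-1)}M_{(j)}M_{(j-1)}^{-1}$ of the three quantum matrices coincides with the quantum monodromy datum (\ref{eqM}) of the twisted arc $\phi_j(\mathfrak a_{j-1})$ computed on a fat graph adapted to the new arc system, and (ii) that the image system sits at the cusp in the adjacent, non-linked configuration required for (\ref{21-43}) rather than the linked configurations of (\ref{32-41}) or (\ref{31-42}). In the classical case (i) is a plausible homotopy statement; in the quantum case it is a statement about operator orderings of essentially the same depth as the direct verification, and nothing in the proposal establishes it. So either the two $R$-matrix computations must be performed, as the paper does, or the compatibility of the algebraic formula (\ref{BG}) with the geometric half-twist must be proved at the quantum level; as written the proposal does neither.
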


\proof We have to verify the preservation of quantum commutation relations. Checking this for $M_{(k)}$ with $k\ne j,j-1$ is simple: such $M_{(k)}$ have commutation relations of the same form (\ref{21-43}) with $M_{(j)}$, $M_{(j-1)}$ and with all elements of the multiplicative non-Abelian group generated by the matrices $M_{(j)}$ and $M_{(j-1)}$. The new matrices $M'_{(j-1)}=M_{(j-1)} M_{(j)}M_{(j-1)}^{-1}$ and $M'_{(j)}=M_{(j-1)}$ must satisfy the same commutation relations as $M_{(j-1)}$ and $M_{(j)}$. For commutation relations of $M'_{(j)}$ with itself it trivially holds, so we have to check two nontrivial relations. The first one is
\begin{align}
&\sheet1{M'}_{(j-1)}R_{12} \sheet2{M'}_{(j)}R_{12}^{-1}= \bigl(\sheet1M_{(j-1)}\sheet1M_{(j)}\sheet1M{}_{(j-1)}^{-1}\bigr) R_{12} \sheet2M_{(j-1)} R_{12}^{-1}
=\sheet1M_{(j-1)}\sheet1M_{(j)}\underline{\sheet1M{}_{(j-1)}^{-1} R_{12} \sheet2M_{(j-1)} }R_{12}^{-1}
\nonumber\\
=&\sheet1M_{(j-1)}\underline{\sheet1M_{(j)} R_{12}^{-\text{T}} \sheet2M_{(j-1)} }R_{12}^{\text{T}} \sheet1M{}_{(j-1)}^{-1} R_{12}  R_{12}^{-1}
=\underline{\sheet1M_{(j-1)}R_{12}^{-\text{T}} \sheet2M_{(j-1)} R_{12}^{\text{T}}  }\sheet1M_{(j)} R_{12}^{-\text{T}}  R_{12}^{\text{T}} \sheet1M{}_{(j-1)}^{-1}
\nonumber\\
=&R_{12} \sheet2M_{(j-1)} R_{12}^{-1} \sheet1M_{(j-1)}  \sheet1M_{(j)} \sheet1M{}_{(j-1)}^{-1}=R_{12} \sheet2{M'}_{(j)} R_{12}^{-1} \sheet1{M'}_{(j-1)}.
\nonumber
\end{align}
And the second relation is
\begin{align}
&R_{12}^{\text{T}}\sheet1{M'}_{(j-1)} R_{12}^{-\text{T}} \sheet2{M'}_{(j-1)}=
R_{12}^{\text{T}}\sheet1M_{(j-1)}\sheet1M_{(j)}\underline{\sheet1M{}_{(j-1)}^{-1} R_{12}^{-\text{T}} \sheet2M_{(j-1)}}\sheet2M_{(j)}\sheet2M{}_{(j-1)}^{-1}
\nonumber\\
=&R_{12}^{\text{T}}\sheet1M_{(j-1)}\sheet1M_{(j)}R_{12}^{-\text{T}} \sheet2M_{(j-1)} R_{12}^{-1} \underline{\sheet1 M{}_{(j-1)}^{-1} R_{12} \sheet2M_{(j)}}\sheet2M{}_{(j-1)}^{-1}
\nonumber\\
= &R_{12}^{\text{T}}\sheet1M_{(j-1)}\underline{\sheet1M_{(j)}R_{12}^{-\text{T}} \sheet2M_{(j-1)} }(R_{12}^{-1} R_{12}) \sheet2M_{(j)} R_{12}^{-1} \sheet1 M{}_{(j-1)}^{-1} R_{12} \sheet2M{}_{(j-1)}^{-1}
\nonumber\\
=&\bigl(\underline{R_{12}^{\text{T}}\sheet1M_{(j-1)} R_{12}^{-\text{T}} \sheet2M_{(j-1)}}\bigr) \bigl(\underline{R_{12}^{\text{T}} \sheet1M_{(j)}R_{12}^{-\text{T}}   \sheet2M_{(j)}}\bigr) R_{12}^{-1} \underline{\sheet1 M{}_{(j-1)}^{-1} R_{12} \sheet2M{}_{(j-1)}^{-1}}
\nonumber\\
=&\sheet2M_{(j-1)}\underline{R_{12}^{-1}\sheet1M_{(j-1)} R_{12} \sheet2M_{(j)} }R_{12}^{-1} \sheet1M_{(j)}(R_{12} R_{12}^{-1}) R_{12}^{-\text{T}} \sheet2M{}_{(j-1)}^{-1} R_{12}^{\text{T}} \sheet1 M{}_{(j-1)}^{-1} R_{12}
\nonumber\\
=&\sheet2M_{(j-1)}\sheet2M_{(j)} R_{12}^{-1}\sheet1M_{(j-1)} (R_{12}  R_{12}^{-1})\underline{ \sheet1M_{(j)} R_{12}^{-\text{T}} \sheet2M{}_{(j-1)}^{-1} }R_{12}^{\text{T}} \sheet1 M{}_{(j-1)}^{-1} R_{12}
\nonumber\\
=&\sheet2M_{(j-1)}\sheet2M_{(j)} R_{12}^{-1}\underline{\sheet1M_{(j-1)} R_{12}^{-\text{T}} \sheet2M{}_{(j-1)}^{-1} R_{12}^{\text{T}} }\sheet1M_{(j)} (R_{12}^{-\text{T}} R_{12}^{\text{T}} )\sheet1 M{}_{(j-1)}^{-1} R_{12}
\nonumber\\
=&\sheet2M_{(j-1)}\sheet2M_{(j)} (R_{12}^{-1}R_{12})\sheet2M{}_{(j-1)}^{-1}R_{12}^{-1} \sheet1M_{(j-1)} \sheet1M_{(j)} \sheet1 M{}_{(j-1)}^{-1} R_{12}
=\sheet2{M'}_{(j-1)} R_{12}^{-1} \sheet1{M'}_{(j-1)}R_{12}
\nonumber
\end{align}
\endproof

\subsubsection{IHX-relations}
We now probe the algebra of monodromy data corresponding to intersecting arcs. Our basic example is the case where we have a single intersection of arcs in  the case of $\Sigma_{0,s+1,1}$. We let $M_{(k)}$ denote as in the preceding subsection the monodromy datum  with endpoints $(2k,2k-1)$; all monodromy data start and terminate at the same bordered cusp. Let us consider the products of monodromy data $M_{(1)}M_{(2)}$ and $M_{(2)}M_{(3)}$ and take their product $\bigl(\sheet1{M_{(1)}}\sheet1{M_{(2)}}\bigr) R_{12}^{-\text{T}} \bigl(\sheet2{M_{(2)}}\sheet2{M_{(3)}}\bigr)$. For this product, we have:
\begin{align}
&\sheet1{M_{(1)}}\underline{\sheet1{M_{(2)}} R_{12}^{-\text{T}} \sheet2{M_{(2)}}}\sheet2{M_{(3)}}
=\underline{\sheet1{M_{(1)}} R_{12}\sheet2{M_{(2)}} R_{12}^{-1}}\sheet1{M_{(2)}} R_{12}^{-\text{T}} \sheet2{M_{(3)}}
\nonumber\\
=&R_{12}\sheet2{M_{(2)}} R_{12}^{-1}\sheet1{M_{(1)}}\sheet1{M_{(2)}} \underline{R_{12}^{-\text{T}} }\sheet2{M_{(3)}}
=R_{12}\sheet2{M_{(2)}} R_{12}^{-1}\sheet1{M_{(1)}}\sheet1{M_{(2)}} \bigl( q R_{12}-(q^{3/2}-q^{-1/2}) P_{12}\bigr)\sheet2{M_{(3)}}
\nonumber\\
=&q  R_{12}\sheet2{M_{(2)}} R_{12}^{-1}\underline{ \sheet1{M_{(1)}}\sheet1{M_{(2)}} R_{12}\sheet2{M_{(3)}} }
-(q^{3/2}-q^{-1/2}) R_{12}\sheet2{M_{(2)}} R_{12}^{-1}\sheet1{M_{(1)}}\sheet1{M_{(2)}} \underline{P_{12} \sheet2{M_{(3)}}}
\nonumber\\
=&q  R_{12}\bigl(\sheet2{M_{(2)}}\sheet2{M_{(3)}}\bigr) R_{12}^{-1} \bigl(\sheet1{M_{(1)}}\sheet1{M_{(2)}}\bigr) R_{12}
-(q^{3/2}-q^{-1/2}) R_{12}\bigl(\sheet2{M_{(2)}}\bigr) R_{12}^{-1}\bigl(\sheet1{M_{(1)}}\sheet1{M_{(2)}} \sheet1{M_{(3)}}\bigr) P_{12}
\label{M12-M23}
\end{align}
In this calculation, we use two identities: the first one is
$$
q^{1/2}R_{12}-q^{-1/2}R_{12}^{-\text{T}}=(q-q^{-1})P_{12},
$$
where $P_{12}=\sum_{i,j} \sheet1 e_{i,j}\otimes \sheet2 e_{j,i}$ is the standard classical permutation matrix. For this matrix, we have that
$$
\sheet1 M P_{12}= P_{12} \sheet2 M\quad\hbox{and}\quad  \sheet2 M P_{12}= P_{12} \sheet1 M
$$
for any (irrespectively classical or quantum) matrix $M$. Formula (\ref{M12-M23}) is similar to the three-term IHX-relation that is common in models of directed intersecting paths in knot theory.
{(This (local) relation is an unrooted version of the Jacobi identity in the theory of finite type (or Vassiliev) invariants of knots, links and 3-manifolds.)} In the right-hand side of (\ref{M12-M23}) we have two terms: one is the term in which the original monodromy data enter in opposite order, the other is the term containing new matrices: $M_{(2)}$ and $M_{(1)}M_{(2)}M_{(3)}$ corresponding to \emph{nonintersecting} paths; these new constitutive monodromy data enjoy commutation relations (\ref{32-41}). Disregarding the $R$-matrix structures, we schematically depict the IHX-relations in the following form (the over-/under-crossing indicates which monodromy datum  stands to the right):
$$
\begin{pspicture}(-6,-1)(6,1){
\newcommand{\CURVEONE}{%
{\psset{unit=1}
\psbezier[linewidth=1.5pt](0,0)(-1,-1)(-1,-2)(0,-2)
\psbezier[linewidth=1.5pt]{->}(0,0)(1,-1)(1,-2)(0,-2)
}
}
\newcommand{\CURVETWO}{%
{\psset{unit=1}
\psbezier[linewidth=6pt,linecolor=white](0,0)(-1,-1)(-1,-2)(0,-2)
\psbezier[linewidth=6pt,linecolor=white](0,0)(1,-1)(1,-2)(0,-2)
\psbezier[linewidth=1.5pt](0,0)(-1,-1)(-1,-2)(0,-2)
\psbezier[linewidth=1.5pt]{->}(0,0)(1,-1)(1,-2)(0,-2)
\pscircle[linewidth=1pt,fillstyle=solid, fillcolor=white,linecolor=white](0,0){0.3}
}
}
\newcommand{\CURVETHREE}{%
{\psset{unit=1}
\psbezier[linewidth=1.5pt](0,0)(-1.5,-0.62)(-1.5,-2)(0,-2)
\psbezier[linewidth=1.5pt]{->}(0,0)(1.5,-0.62)(1.5,-2)(0,-2)
}
}
\newcommand{\CURVEFOUR}{%
{\psset{unit=1}
\psbezier[linewidth=1.5pt](0,0)(-0.41,-1)(-0.5,-1.7)(0,-1.7)
\psbezier[linewidth=1.5pt]{->}(0,0)(0.41,-1)(.5,-1.7)(0,-1.7)
\pscircle[linewidth=1pt,fillstyle=solid, fillcolor=white,linecolor=white](0,0){0.3}
}
}
\put(-5.5,0){\makebox(0,0)[rc]{\hbox{{$q^{1/2}$}}}}
\put(-1.4,0){\makebox(0,0)[rc]{\hbox{{$-q^{-1/2}$}}}}
\put(3.5,0){\makebox(0,0)[rc]{\hbox{{$=(q-q^{-1})$}}}}
\rput{-22.5}(-4,1){\CURVEONE}
\rput{22.5}(-4,1){\CURVETWO}
\rput{22.5}(0,1){\CURVEONE}
\rput{-22.5}(0,1){\CURVETWO}
\rput(5,1){\CURVETHREE}
\rput(5,1){\CURVEFOUR}
}
\end{pspicture}
$$

\subsection{Monodromy algebras for $\Sigma_{0,2,2}$}

We consider the example of monodromy data for an ``eye'' -- the disc with the hole inside and with two bordered cusps on the outer boundary
(Fig.~\ref{fig:sigma}(b)).
{This case is of particular interest as it is a generalization of the modromy data associated to the Dubrovin connection in  the theory of Frobenius manifolds \cite{Dub}.}
In this case we have two monodromy data $M_1$ and $M_2$, both subject to restriction (\ref{starstar}). We then have the following statement
about Poisson leaves of this algebra.

\begin{lemma}
\label{lm:S022}
For $\Sigma_{0,2,2}$ with two cusps on one hole, the algebra of restricted monodromy data $M_1$ and $M_2$ has maximum Poisson dimension
$k(k-1)$. The $2k$ Casimirs are $\tr [[M_1M_2]^p]$, $p=1,\dots,k$, and the ratios $m_{1_{i,k+1-i}}/m_{2_{k+1-i,i}}$, $i=1,\dots,k$.
\end{lemma}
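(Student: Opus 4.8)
The plan is to follow the same strategy used in the proofs of Lemmas~\ref{lm:M12-Casimirs-UT} and \ref{lm:M1x-Casimirs-res} and in Theorem~\ref{thm:S0s1}: first exhibit the claimed Casimirs directly from the $R$-matrix (equivalently, Poisson) relations, then evaluate the rank of the Poisson bivector at a convenient generic point to show that the maximum Poisson dimension exactly accounts for those Casimirs. For the first part, $M_1$ and $M_2$ are two closed arcs around the single hole, meeting at the two cusps; by the composition rules the product $M_1M_2$ is a monodromy datum for a path circumnavigating the hole (this is the boundary of the ``eye'' read off at one cusp), so Theorem~\ref{thm:traces} applies and $\tr[(M_1M_2)^p]$, $p=1,\dots,k$, are Casimirs. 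For the antidiagonal ratios, I would argue as in Theorem~\ref{thm:S0s1}: because both matrices carry the restriction (\ref{starstar}), in the pairwise relations $M_1$ and $M_2$ can be identified with the ``leftmost-to-rightmost'' matrix of the type $M_{41}$, and the antidiagonal elements $m_{1_{i,k+1-i}}$, $m_{2_{k+1-i,i}}$ have homogeneous Poisson relations with all matrix elements of both $M_1$ and $M_2$; matching the coefficients shows $m_{1_{i,k+1-i}}/m_{2_{k+1-i,i}}$ Poisson-commutes with everything. This gives $2k + k = 3k$ Casimirs, but the $k$ ratios and the $k$ traces are not all independent (the determinants of $M_1,M_2$ are fixed), so I must be careful to count exactly $2k$ independent central functions — most cleanly by checking that once $\det M_1=\det M_2=1$ is imposed, the product of the $k$ ratios is expressible through the traces, reducing the independent count by one per determinant constraint.

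For the dimension count I would evaluate the Poisson bivector at a point where $M_1$ and $M_2$ both have nonzero antidiagonal entries and nonzero upper-half diagonal entries (and all such entries algebraically unrelated), exactly as in the proof of Lemma~\ref{lm:M12-Casimirs-UT}. The key computation is that, at such a point, the Poisson relations (\ref{H-Pb}) for each of $M_1$, $M_2$ together with the cross-relation governing the pair $(M_1,M_2)$ — which is one of (\ref{rel-comp1})--(\ref{rel-comp4}) specialised to the two arcs of the eye — decouple into small blocks indexed by the symmetry orbits $S[m_{i,j}]$ generated by $m_{i,j}\mapsto m_{k+1-i,j}$ and $m_{i,j}\mapsto m_{j,i}$, now simultaneously for both matrices. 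One then checks that the generic (``quadruple/octuple'') blocks are nondegenerate while the diagonal-type blocks (triples and singles) carry a fixed deficiency, and the total deficiency equals the number $2k$ of independent Casimirs just found, giving maximum Poisson dimension $k(k-1)$.

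The main obstacle, as in the earlier lemmas, is the block-decoupling bookkeeping for the combined system $(M_1,M_2)$: one must verify that the cross-bracket between $M_1$ and $M_2$ respects the same orbit structure as the self-brackets — i.e. that $\{m_{1_{i,j}}, m_{2_{k,l}}\}$ only ever produces terms inside $S[m_{1_{i,j}}]\cup S[m_{2_{k,l}}]$ — and then that the resulting finite-dimensional blocks have exactly the predicted corank. This is the step I expect to be genuinely laborious rather than conceptually hard, and in the paper's style it would be stated as ``a cumbersome but straightforward calculation, omitted''. A secondary subtlety is confirming which of the relations (\ref{rel-comp1})--(\ref{rel-comp4}) governs the pair of arcs of the eye (orientation of the two cusps along the boundary of the single hole), but this is fixed once the fat graph of $\Sigma_{0,2,2}$ from Fig.~\ref{fig:sigma}(b) is drawn and the left/right turn pattern of the two arcs is read off.
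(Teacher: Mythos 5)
Your plan follows the paper's proof essentially verbatim: the traces of powers of the product loop are Casimirs by Theorem~\ref{thm:traces}, the antidiagonal ratios are central because antidiagonal entries have homogeneous brackets with everything, and the rank of the Poisson bivector is computed at a special point by decomposing into orbits of the symmetry group (the paper's octuplets/quadruplets/sextets/doublets, whose total corank is $4[k/2]$ or $4[k/2]+2$, i.e.\ $2k$ in both parities). The only slip is your count ``$2k+k=3k$'': the lemma claims exactly $k$ traces plus $k$ ratios $=2k$ Casimirs, and since the paper explicitly does \emph{not} impose $\det M=1$ these are independent, so your digression about eliminating a dependence via the determinant constraints is unnecessary (though it is true that on the $SL_k$ locus the product of the $k$ ratios degenerates to $\pm\det M_1/\det M_2=\pm1$).
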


\begin{proof}
Anti-diagonal elements of both restricted monodromy matrices have homogeneous commutation relations with all other elements; it is
easy to check that ratios $m_{1_{i,k+1-i}}/m_{2_{k+1-i,i}}$ Poisson commute with all elements. Besides that we have that the product
$M_2M_1$ is a monodromy data corresponding to circumnavigating the central hole, so traces of all powers of this product are central elements.
(Because of the trace property, it is irrelevant whether we take the monodromy data to be $M_2M_1$ or $M_1M_2$.) We then again consider Poisson
bi-vectors over special matrices $M_1$ and $M_2$ with only halves of main diagonal and both anti-diagonals nonzero; we again split elements into
octuplets (orbits of $S[m_{1_{i,j}}]\cup S[m_{2_{i,j}}]$ with $1\le j<i\le [k/2]$), quadruplets (orbits of $S[m_{1_{\frac{k+1}{2},j}]}\cup S[m_{2_{\frac{k+1}{2},j}}]$ with $1\le j< [(k+1)/2]$), sextets (orbits of $S[m_{1_{i,i}}]\cup S[m_{2_{i,i}}]$ with $1\le i\le [k/2]$)
and doublets $\{m_{1_{\frac{k+1}{2},\frac{k+1}{2}}},m_{2_{\frac{k+1}{2},\frac{k+1}{2}}}\}$. Octuplets and quadruplets are non-degenerate, sextets
have Poisson dimension two and doublets have Poisson dimension zero, so the total Poisson co-dimension is $4[k/2]$ for even $k$ and $4[k/2]+2$
for odd $k$; it is easy to see that it is $2k$ in both cases, as expected. 
\end{proof}

\subsection{Monodromy algebra for $\Sigma_{0,1,3}$}

The last example pertains to another elementary building block of monodromy data: the ideal triangle $\Sigma_{0,1,3}$ (Fig.~\ref{fig:sigma}(c)) that for $k=2$ corresponds to the  modromy data of the Airy equation. In this case, we have two monodromy matrices $M^1$ and $M^2$ (here $M^2$ follows $M^1$), both having the reduced (upper-anti-triangular) form - we denote them with an upper index to distinguish them from the other examples. Plus we
have to take into account that their product, $M^2M^1$, has itself lower-anti-triangular form. This imposes $k(k-1)/2$ restrictions on
entries of $M^1$ and $M^2$; for the general position situation it is not difficult to see that we can express all non-anti-diagonal
entries of, say, matrix $M^2$ in terms of entries of $M^1$ and the anti-diagonal elements of $M^2$ and, using relations (\ref{basic1}), we obtain
that, whereas entries of the matrix $M^1$ enjoy commutation relations (\ref{3.100}), the Poisson relations for entries of $M^1$ and 
anti-diagonal entries of $M^2$
\beq
\label{M1M2}
\{m^1_{i,j},m^2_{r,k+1-r}\}=m^1_{i,j}m^2_{r,k+1-r}\delta_{i,k+1-r},\ i+j\le k+1,\ r=1,\dots,k,
\eeq
are homogeneous and all elements of $M^1$ belonging to the same row commute in the same way with all $m^2_{k,k+1-r}$. All $m^2_{k,k+1-r}$
mutually commute. We then have the following statement about Casimirs of the algebra of the set of elements $\{m^1_{i,j},\ i+j\le k+1\}\cup
\{m^2_{k,k+1-r},\ r=1,\dots,k\}$. 

\begin{lemma}
\label{lm:S013}
For $\Sigma_{0,1,3}$ with three bordered cusps on a disc (one outer hole), the algebra of restricted monodromy data $M^1$ and 
$m^2_{r,k+1-r}$ has maximum Poisson dimension $\frac {k(k+1)}{2}+k-\Bigl[\frac{k}{2}\Bigr]$. The $\Bigl[\frac{k}{2}\Bigr]$ Casimirs are 
$$
C_d=\frac{[M^1]_d^{\text{UL}}[M^1]_{k-d}^{\text{UL}}}{\prod_{i=1}^d [m^1_{i,k+1-i}]^2 \prod_{i=d+1}^{k-d} [m^1_{i,k+1-i}]}\cdot
\frac{\prod_{i=1}^d [m^2_{k+1-i,i}]^2} 
{\prod_{i=1}^d [m^2_{i,k+1-i}]^{2}},\quad d=1,\dots, \Bigl[\frac{k}{2}\Bigr],
$$
where $[M^i]_d^{\text{UL}}$, $i=1,2$ denote the upper-left minors of size $d$ of the matrix $M^i$.
\end{lemma}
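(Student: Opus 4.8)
The plan is to establish the three ingredients of the claim in the spirit of the proofs of Lemmas~\ref{lm:M1x-Casimirs-res} and \ref{lm:S022}: (i) the $C_d$ are Casimirs; (ii) they are functionally independent; (iii) the Poisson bivector of the algebra on the coordinates $\{m^1_{i,j}:i+j\le k+1\}\cup\{m^2_{r,k+1-r}:r=1,\dots,k\}$ attains rank $\frac{k(k+1)}{2}+k-[k/2]$, which together with (i)--(ii) forces the list of Casimirs to be complete (the coordinate count being $\frac{k(k+1)}{2}+k$). For (i) I would use that every coordinate has a \emph{homogeneous} Poisson bracket with each factor occurring in $C_d$: with the principal minors $[M^1]^{\text{UL}}_d$ and $[M^1]^{\text{UL}}_{k-d}$ by the homogeneity of minors already exploited in Lemma~\ref{lm:M1x-Casimirs-res} (the weight pattern of Fig.~\ref{fig:M3x}(a)); with the anti-diagonal entries $m^1_{i,k+1-i}$ of $M^1$ directly from (\ref{3.100}) (Fig.~\ref{fig:M3x}(b)); and with the anti-diagonal entries $m^2_{i,k+1-i}$ of $M^2$ from (\ref{M1M2}), these being mutually commuting. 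Hence $\{x,C_d\}=w_d(x)\,x\,C_d$ with $w_d(x)$ an integer, and it remains to see $w_d\equiv0$: for entries of $M^1$ this is the complementarity of the two patterns of Fig.~\ref{fig:M3x}, corrected by the extra weight that (\ref{M1M2}) injects into the rows $i=k+1-r$, the prescribed powers of $m^2_{i,k+1-i}$ and $m^2_{k+1-i,i}$ in $C_d$ being exactly those cancelling this correction row by row; for $x=m^2_{r,k+1-r}$ the weight is fed only by the $[M^1]^{\text{UL}}$-minors and by the $M^1$-anti-diagonal factors (through (\ref{M1M2})) and vanishes for the same combinatorial reason. In short, the identity behind the cancellation is the one that makes the Casimirs (\ref{Cd}) of Lemma~\ref{lm:M1x-Casimirs-res} central, now ``dressed'' by an $M^2$-factor absorbing the reduction coupling (\ref{M1M2}).

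Point (ii) is immediate: the $C_d$ involve the principal minors $[M^1]^{\text{UL}}_d$ of the $[k/2]$ pairwise distinct sizes $d=1,\dots,[k/2]$, so their differentials are linearly independent at a generic point (all entries of $M^1$ and all anti-diagonal entries of $M^2$ nonzero), whence the $C_d$ are algebraically independent.

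For (iii) I would evaluate the Poisson bivector at the degenerate point where $M^1$ keeps only its anti-diagonal and its upper half-diagonal $m^1_{i,i}$ ($i\le[k/2]$) nonzero and generic, while $M^2$ keeps its full anti-diagonal nonzero and generic --- exactly the device of Lemmas~\ref{lm:M12-Casimirs-UT}, \ref{lm:M1x-Casimirs-res} and \ref{lm:S022}. At this point the nonzero brackets (\ref{3.100}), (\ref{M1M2}) decouple into blocks indexed by the orbits $S[\cdot]$ under the reflections $m_{i,j}\mapsto m_{k+1-i,j}$ and $m_{i,j}\mapsto m_{j,i}$, together with the $m^2_{r,k+1-r}$ attached by (\ref{M1M2}) to the block carrying row $k+1-r$ of $M^1$. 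The generic off-diagonal blocks (quadruplets, and the doublets involving the central index) contain no $m^2$-variable and stay non-degenerate exactly as in Lemma~\ref{lm:M1x-Casimirs-res}; the ``triples'' $\bigl(m^1_{i,i},m^1_{k+1-i,i},m^1_{i,k+1-i}\bigr)$, of Poisson dimension two there, acquire the two variables $m^2_{i,k+1-i}$ and $m^2_{k+1-i,i}$, and one checks that the resulting quintuple has Poisson dimension exactly four, hence still exactly one Casimir; for odd $k$ the former ``single'' $m^1_{(k+1)/2,(k+1)/2}$ pairs with $m^2_{(k+1)/2,(k+1)/2}$ into a non-degenerate doublet. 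Summing the block ranks gives $\frac{k(k+1)}{2}+k-[k/2]$, i.e.\ the coordinate count minus the $[k/2]$ Casimirs $C_d$, so the maximum is attained and there are no further central elements.

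The step I expect to cost the most work is (iii): tracking at the degenerate point which $m^2_{r,k+1-r}$ lands in which $M^1$-orbit, and verifying that the enlarged ``diagonal'' blocks acquire Poisson dimension exactly four (rather than two, as in the pure-$M^1$ case), together with the even/odd parity bookkeeping for the doublet coming from the central index. This is cumbersome but of the same routine character as the rank computations performed or declared omitted elsewhere in the paper; once it is in place, the matching of codimension with the number of $C_d$'s closes the argument.
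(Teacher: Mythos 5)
Your proposal follows essentially the same route as the paper's proof: the Casimir property of $C_d$ is checked via the complementary homogeneous weight patterns (the paper's Fig.~\ref{fig:M013}, which is exactly your ``dressed'' version of Fig.~\ref{fig:M3x}), and the rank is computed at the same degenerate point, with the same conclusion that the quintuplets $\{m^1_{i,i},m^1_{i,k+1-i},m^1_{k+1-i,i},m^2_{i,k+1-i},m^2_{k+1-i,i}\}$ have Poisson dimension four and the central doublet (for odd $k$) is non-degenerate. The only addition is your explicit independence check (ii), which the paper leaves implicit.
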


\proof The fact that $C_d$ are Casimirs can be verified directly (see Fig.~\ref{fig:M013}). We then again consider
a Poisson bi-vector over the pattern in which nonzero elements are $m^1_{i,i}$ with $i\le [(k+1)/2]$, $m^1_{i,k+1-i}$, and $m^2_{i,k+1-i}$.
For quadruples of the matrix $M^1$ the Poisson brackets are non-degenerate, whereas the Poisson dimension of quintuplets $\{m^1_{i,i},m^1_{i,k+1-i},m^1_{k+1-i,i},m^2_{i,k+1-i},m^2_{k+1-i,i}\}$ for $i=1,\dots,[k/2]$ is four (so each quintuplet adds one Casimir) and the Poisson dimension of the doublet $\{m^1_{\frac{k+1}{2},\frac{k+1}{2}},m^2_{\frac{k+1}{2},\frac{k+1}{2}}\}$ is two; the total Poisson codimension therefore matches the above number of Casimirs. \endproof

\begin{figure}[tb]
{\psset{unit=0.7}
\begin{pspicture}(-3,-3)(3,3)
\psframe[linecolor=black, linewidth=1pt, fillstyle=solid, fillcolor=white](-2.5,2.5)(2.5,-2.5)
\psframe[linecolor=blue, linestyle=dashed, linewidth=2pt, fillstyle=solid, fillcolor=white](-2.5,2.5)(1,-1)
\psframe[linecolor=blue, linestyle=dashed, linewidth=2pt, fillstyle=solid, fillcolor=yellow](-2.4,2.4)(-1,1)
\pcline[linewidth=1pt](-2.5,-2.5)(2.5,2.5)
\pcline[linewidth=1pt](-1,1)(-1,-1)
\pcline[linewidth=1pt](-1,1)(1,1)
\pcline[linewidth=2pt, linecolor=blue](2.35,2.25)(1.05,.95)
\pcline[linewidth=2pt, linecolor=blue](2.45,2.15)(1.15,.85)
\pcline[linewidth=2pt, linecolor=blue](-0.75,-.95)(.95,0.75)
\put(-1.75,1.75){\makebox(0,0)[cc]{\hbox{{$-2$}}}}
\put(-1.75,0){\makebox(0,0)[cc]{\hbox{{$0$}}}}
\put(-2,-1.5){\makebox(0,0)[cc]{\hbox{{$+2$}}}}
\put(-0.25,0.25){\makebox(0,0)[cc]{\hbox{{$0$}}}}
\put(1.5,2){\makebox(0,0)[cc]{\hbox{{$-2$}}}}
\put(0,1.75){\makebox(0,0)[cc]{\hbox{{$-2$}}}}
\put(0,-2.9){\makebox(0,0)[cc]{\hbox{{\bf (a)}}}}
%
\end{pspicture}
\begin{pspicture}(-3,-3)(3,3)
\psframe[linecolor=black, linewidth=1pt, fillstyle=solid, fillcolor=white](-2.5,2.5)(2.5,-2.5)
\pcline[linewidth=1pt](-2.5,-2.5)(2.5,2.5)
\pcline[linewidth=1pt](-1,2.5)(-1,-1)
\pcline[linewidth=1pt](-2.5,1)(1,1)
\pcline[linewidth=1pt](1,2.5)(1,1)
\pcline[linewidth=1pt](-2.5,-1)(-1,-1)
\pcline[linewidth=3pt, linecolor=red](-2.2,-2.4)(-.9,-1.1)
\pcline[linewidth=3pt, linestyle=dashed, linecolor=red](2.4,2.2)(1.1,.9)
\put(-1.75,1.75){\makebox(0,0)[cc]{\hbox{{$+2$}}}}
\put(-1.75,0){\makebox(0,0)[cc]{\hbox{{$0$}}}}
\put(-2,-1.5){\makebox(0,0)[cc]{\hbox{{$-2$}}}}
\put(-0.25,0.25){\makebox(0,0)[cc]{\hbox{{$0$}}}}
\put(1.5,2){\makebox(0,0)[cc]{\hbox{{$+2$}}}}
\put(0,1.75){\makebox(0,0)[cc]{\hbox{{$+2$}}}}
\put(0,-2.9){\makebox(0,0)[cc]{\hbox{{\bf (b)}}}}
%
\end{pspicture}
}
\caption{Constructing Casimirs for the restricted matrix $M^1$  subject to Poisson algebra (\ref{3.100}) and the anti-diagonal entries of
the matrix $M^2$. Numbers in
the corresponding rectangles or triangles indicate the sign of homogeneous commutation relations between elements of $M^1$ in
the corresponding region and (a) the products of minors $[M^1]_d^{\text{UL}}[M^1]_{k-d}^{\text{UL}}$ ($d\le k-d$) divided
by the special products of anti-diagonal elements $\prod_{i=1}^d [m^1_{i,k+1-i}]^2 \prod_{i=d+1}^{k-d} [m^1_{i,k+1-i}]$;
 (b) by the ratios of products $\prod_{i=1}^d\bigl[ [m^2_{k+1-i,i}]^2 [m^2_{i,k+1-i}]^{-2} \bigr]$
 of elements on the antidiagonal of the matrix $M^2$.
We see that these signs are complementary in all regions. All anti-diagonal entries of $M^2$ mutually commute and commute with the above ratios of elements of $M^1$ because every term contains equal number of elements from the same row of the matrix $M^1$ in the numerator and denominator.}
\label{fig:M013}
\end{figure}
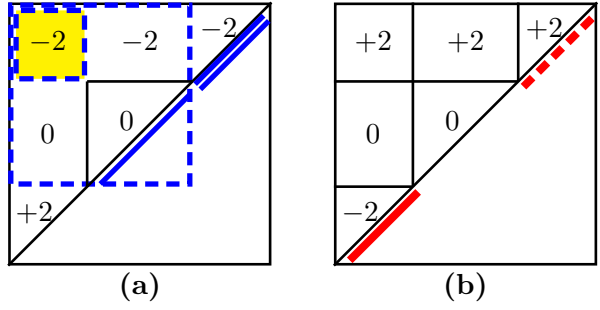

\section{The extended Riemann--Hilbert correspondence}\label{se:analytic}
\label{s:RH}

In this section we conjecture how to define a suitably decorated moduli space $\mathcal Z_{irr}^k( \Sigma_{g,s})$ of irregular connections $\nabla$ on a holomorphic rank $k$-vector
bundle $E \to \Sigma_{g,s}$ in such a way that the Riemann--Hilbert correspondence
$$
RH:\mathcal M_{irr}^k( \Sigma_{g,s})\to \mathcal M_{g,s,n}^k
$$
is a Poisson isomorphism.

Given an irregular  connection  $\nabla$ on a holomorphic rank $k$-vector
bundle $E \to \Sigma_{g,s}$,
choosing a coordinate $z$ for $\Sigma_{g,s}$ amounts to giving a linear system of differential equations $\nabla_{\frac{\partial}{\partial z}}$ with $s$ poles $a_1,\dots,a_s$  of Poincar\'e rank $r_1,\dots,r_s$ or in other words a meromorphic matrix-valued differential
$$
d- A(z) dz
$$
with fixed multiplicities $r_1+1,\dots, r_s+1$ at $a_1,\dots,a_s$.

For the sake of simplicity, we restrict to the non-ramified case where $A(z)$ is diagonalisable at each $a_p$. Then, Krichever proved that  the space $\mathcal E_{irr}(r_1,\dots,r_s)$ of all possible meromorphic matrix-valued differentials of this form modulo the $SL_k$ action is $(2g+s+r_1+\dots+r_s-2)(k^2-1)$-dimensional and is foliated in symplectic leaves by fixing the exponents, with the symplectic form:
$$
\omega= -\frac{1}{2}\sum_{t=1}^{kg} {\rm res}_{\gamma_t}{\rm Tr}\left(Y^{-1} \delta A\wedge \delta Y\right) -\frac{1}{2}\sum_{p=1}^{s} {\rm res}_{a_p}{\rm Tr}\left(Y^{-1}_p \delta A\wedge \delta Y_p\right)
$$
where $Y_p$ is the formal local solution of  $dY=A(z) Y dz $ at $a_p$ and $\gamma_1,\dots,\gamma_{gk}$ are the simple zeroes of the holomorphic sections of the vector bundle
$E \to \Sigma_{g,s}$ \cite{Kri}. This symplectic form induces a Poisson structure on $\mathcal Z_{irr}^k( \Sigma_{g,s}):= \mathcal E_{irr}(r_1,\dots,r_s)\times \mathbb C^{(r_1+\dots+r_s)(k-1)}$, where $\mathbb C^{(r_1+\dots+r_s)(k-1)}$ is the space of decorations, or in other words a choice of growth rates of the absolute value of the formal solutions $Y_p$ modulo polynomial growth \cite{STWZ}. Following the ideas by Gaiotto, Moore and Neitzke \cite{GMN}, we impose $n=2(r_1+\dots+r_s)$, so  that
$\dim(\mathcal Z_{irr}^k( \Sigma_{g,s}))=\dim( \mathcal M_{g,s,n}^k)$. We end this paper with the following

{\noindent{\bf Conjecture:}
{\it The Riemann--Hilbert correspondence
$$
RH:\mathcal Z_{irr}^k( \Sigma_{g,s})\to \mathcal M_{g,s,n}^k
$$
is a Poisson isomorphism.}}
 
We have tested this conjecture in the case of $g=0$, $k=2$ and connections with only one irregular singularity of Poincar\'e rank $3$. In this case we have the Jimbo-Miwa linear system associated to the PII equation and $n=6$. The decorated character variety $\mathcal M_{0,1,6}^2$ has dimension $9$ with one Casimir. As explained in \cite{CMR}, the isomonodromicity condition means that we need to restrict to a $2$--dimensional sub--algebra in  $\mathcal M_{0,1,6}^2$ defined by the set of functions that Poisson commute with the frozen cluster variables corresponding to arcs connecting pairs of bordered cusps. On the l.h.s. of the Riemann-Hilbert correspondence, the space $\mathcal Z_{irr}^2( \Sigma_{0,1})$ has also dimension $9$ and by imposing the isomonodromicity condition (where $t$ is the PII independent variable):
$$
\frac{\partial A}{\partial t}-\frac{\partial B}{\partial z} =[B,A]
$$
one obtains a restriction to a $2$--dimensional space  \cite{HR} which we denote  $\tilde{\mathcal Z}_{irr}^2( \Sigma_{0,1})$.

\begin{remark}
The space  $\tilde{\mathcal Z}_{irr}^2( \Sigma_{0,1})$ is the  de Rham side of the Riemann-Hilbert correspondence. Recently \cite{Sz} a complete description of the two-dimensional (family of) holomorphic symplectic moduli spaces of rank 2 Higgs bundles over $\mathbb P^1$ having a unique pole of order 4 as singularity, and regular leading-order term was obtained. This moduli  (for a fixed choice of parameters) can be interpreted as the Dolbeault counterpart  ${\mathcal M}_{D,irr}^2$ of  $\tilde{\mathcal Z}_{irr}^2( \Sigma_{0,1}).$ \end{remark}

\begin{remark}
A general extended Riemann-Hilbert correspondence for related objects (a moduli space of stable unramified  irregular singular parabolic connections on smooth projective curves and a set
$\widetilde{\mathcal R(g,k,s)}$ of generalized monodromy data coming from topological monodromies, formal monodromies and Stokes data) was proposed by M. Inaba and M. Saito in \cite{IS}. They proved that the moduli space of generalized monodromy data is a nonsingular affine scheme $\mathcal R(g,k,s)$ given by the categorical quotient $\mathcal R(g,k,s) = \widetilde{\mathcal R(g,k,s)}//G$ for a natural action of a reductive group $G.$ 
An immediate comparison of dimensions for $\mathcal M_{g,s,n}^k$ and  $\widetilde{\mathcal R(g,k,s}$ shows a good correspondence. For example, in the case of $\mathcal M_{0,1,6}^2$ (whose dimension is 9) the dimension of  $\widetilde{\mathcal R(0,2,1)}$ is 8 (9 minus one Casimir) and $\dim \mathcal M_{0,1,6}^2 = \dim \mathcal R(0,2,1)=2.$
\end{remark}

\section{Acknowledgements}
The authors wish to dedicate this paper to Nigel Hitchin, whose beautiful mathematics has inspired many of our papers. We thank A.Alekseev, A. Glutsuk, M. Gualtieri, A. Shapiro, M. Shapiro, N. Nekrasov, B. Pym and P. Severa for helpful discussions.
The research of VR was partially supported by  the Russian Foundation for Basic Research under grant RFBR-15-01-05990. He is thankful to  MPIM (Bonn) and SISSA (Trieste), where a part of this work was done, for invitation and excellent working conditions. The work of L.Ch. was partially financially supported by the Russian Foundation for Basic Research under grant RFBR-15-01-99504.


\end{document}